\let\footnote=\endnote
\newcommand{\setd}{{ d \kern -.15em l}}
\newcommand{\hatsetd}{ d \hat{\kern -.15em l }}
\newcommand{\dd}{\mathsf {d\kern -0.07em l}} 
\newcommand{\x}{\bm{x}}
\newcommand{\bgeqn}{\begin{eqnarray}}
\newcommand{\edeqn}{\end{eqnarray}}
\newcommand{\bgeq}{\begin{eqnarray*}}
\newcommand{\edeq}{\end{eqnarray*}}
\newcommand{\bec}{\begin{center}}
\newcommand{\enc}{\end{center}}
\newcommand{\R}{\mathbb{R}}
\newcommand{\inmat}[1]{\mbox{\rm {#1}}}
\newcommand{\Z}{{\cal Z}}
\newcommand{\F}{{\cal F}}
\newcommand{\B}{{\cal B}}
\def\dom{{\rm dom}}
\def\bbe{{\Bbb{E}}} 
\newenvironment{breakablealgorithm}
  {
   \begin{center}
     \refstepcounter{algorithm}
     \hrule height.8pt depth0pt \kern2pt
     \renewcommand{\caption}[2][\relax]{
       {\raggedright\textbf{\ALG@name~\thealgorithm} ##2\par}%
       \ifx\relax##1\relax 
         \addcontentsline{loa}{algorithm}{\protect\numberline{\thealgorithm}##2}%
       \else 
         \addcontentsline{loa}{algorithm}{\protect\numberline{\thealgorithm}##1}%
       \fi
       \kern2pt\hrule\kern2pt
     }
  }{
     \kern2pt\hrule\relax
   \end{center}
  }
 \newcommand{\LJ}[1]{{#1}}
\begin{document}


\RUNAUTHOR{Liu, Chen and Xu}

\RUNTITLE{Multistage Utility Preference Robust Optimization}

\TITLE{Multistage Utility Preference Robust Optimization}

\ARTICLEAUTHORS{%
\AUTHOR{Jia Liu and Zhiping Chen}
\AFF{School of Mathematics and Statistics, Xi'an Jiaotong University, Xi'an, Shaanxi, P. R. China,\\
 Center for Optimization Technique and Quantitative Finance, Xi'an International Academy
for Mathematics and Mathematical Technology, Xi'an, P. R. China, \EMAIL{jialiu@xjtu.edu.cn, zchen@mail.xjtu.edu.cn} } 
\AUTHOR{Huifu Xu}
\AFF{Department of Systems Engineering and Engineering Management,
The Chinese University of Hong Kong, Hong Kong, \EMAIL{hfxu@se.cuhk.edu.hk}}
} 

\ABSTRACT{%
In this paper, we consider a multistage expected utility maximization problem where the decision maker's  utility function at each stage depends on historical data
and the information on the true utility function is incomplete.
To mitigate { adverse impact} arising from ambiguity of the true utility, we propose a maximin robust model where the optimal policy is based on the worst-case sequence of utility functions from an ambiguity set constructed with partially available information about the decision maker's preferences.
We then show that the multistage maximin problem is time consistent when the utility functions are state-dependent and demonstrate with a counter example that the time consistency may not be retained when the utility functions are state-independent. With the time consistency, we show the maximin problem can be solved by a recursive formula whereby a one-stage maximin problem is solved at each stage beginning from the last stage.
Moreover, we propose two approaches to construct the ambiguity set: a pairwise comparison approach and a $\zeta$-ball approach where a ball of utility functions centered at a nominal utility function under $\zeta$-metric is considered. To overcome the difficulty arising from solving the infinite dimensional optimization problem in computation of the worst-case expected utility value, we propose piecewise linear approximation of the utility functions and derive error bound for the approximation under moderate conditions.
{Finally, we use the stochastic dual dynamic programming (SDDP) method and the nested Benders' decomposition method to solve the multistage
state-dependent preference robust problem and the scenario tree method
to solve the state-independent problem, and carry out comparative analysis
on the efficiency of the computational schemes as well as out-of-sample performances of the state-dependent and state-independent models. The preliminary results
show that the state-dependent preference robust model solved by SDDP algorithm
displays overall superiority.
}


}%

\KEYWORDS{Preference robust optimization, state-dependent utility,
rectanglarity,
time consistency, Kantorovich ball, { scenario tree method,
SDDP,
nested Benders' decomposition method}
} \HISTORY{This paper was
first submitted on Sep. 2021, revised on Feb. 2023.}

\maketitle

\section{Introduction}

Decision making under uncertainty has two important elements: belief and taste.
Belief is the decision maker's (DM for brevity)
view on the state of nature of the underlying uncertainty whereas taste
is the DM's preference. When there is an ambiguity about belief, one may base the optimal decision on the worst-case scenario
of the uncertainty or worst-case probability distribution 
\cite{GiS89} 
 and this is the case of robust optimization or
distributionally
robust optimization.
Over the past few decades, a lot of research have been conducted on robust optimization and
distributionally
robust optimization
models, see monograph by Ben-Tal et al.~\cite{BGN09} and
a comprehensive overview by Rahimian and Mehrotra \cite{RaM19}.

Ambiguity may also occur with regard to taste.
{ An explicit utility function might not be available
when information on the DM's preference is incomplete.
There is a multitude of
ways about how to use
preference information to construct a utility function.
In the literature of decision analysis and behavioural economics,
a popular method is
to elicit the DM's preferences
with paired gambling approaches for preference comparisons 
\cite{Far84},
use the elicited information to
identify the value
 of the utility function at a discrete set of points and}
construct an approximate utility function
via some interpolation methods,
see for instance
\cite{ClR01}.

Armbruster and Delage \cite{AmD15}
{ argue that the interpolation
approach has some drawbacks because not only
it is often difficult to identify a non-parametric
utility function purely based on the DM's preferences
over pairwise comparison lotteries
but also it could be risky to use a single approximate utility function without considering other plausible ones nearby. Consequently, they propose an alternative
approach, that is,}
instead of trying to find
{ a single} approximate von Neumann and Morgenstern's utility function,
they
{ propose to} use
available information of the DM's preferences
such as preferring certain lotteries over other lotteries,
being risk averse over gains and risk taking over losses
to
construct an ambiguity
 set of plausible utility functions
 and then base the optimal decision
 on the worst-case utility function from the ambiguity set.
The approach is called preference robust optimization (PRO)
as it  follows the general philosophy of robust optimization.
{
In the case that the ambiguity set is constructed through
pairwise comparisons, the PRO model
may be viewed as an extension
of the well-known
stochastic programs with stochastic dominance constraints
(Dentcheva and Ruszczy\'nski
\cite{DeR03}).
}
Hu and Mehrotra \cite{HuM15}  also take a PRO approach
to tackle the ambiguity of the true utility function but
in a slightly different manner.
They consider a probabilistic representation of the class of increasing concave utility functions by
  confining them to a compact interval and scaling them to being bounded by $1$.
In doing so, they propose a
 moment-type
 framework for constructing
  the ambiguity set of the DM's utility functions
  which covers a number of important approaches such as
  the certainty equivalent and pairwise comparison.

Over the past few years,
PRO has attracted increasing attentions.
For instances,
Haskell et al.~\cite{HFD16}
propose a robust model which handles the ambiguity of DM's
 belief and taste.
Hu and Stepanyan \cite{HuS17} propose a so-called reference-based almost
stochastic dominance method for constructing
a set of utility functions
near a reference utility
which satisfy certain stochastic dominance relationship
and use the set to characterize the DM's preference.
Hu et al.~\cite{HBM18} consider a PRO model with an ambiguity set of
general utility functions and propose a  Lagrangian function approach
for solving the resulting maximin problem.
Guo and Xu \cite{GuX21} propose
a piecewise linear approximation approach for solving
a PRO model with the ambiguity set
being specified by moment-type conditions and derive
a bound for approximation error in terms of the ambiguity set, the optimal value and optimal solutions.
The PRO approach has also been effectively applied to risk management problems where the DM's risk preferences
are ambiguous, see \cite{DeL17,GuX21a,Li21,WaX20,WHHX21,ZXW20}.


In this paper, we extend this stream of research to multistage decision making process.
There are several
modelling approaches in multistage decision making such as
multistage stochastic optimization,
Markovian decision making,
and approximate dynamic programming \cite{Pow19}.
Among them, multistage stochastic optimization (MSO) has been widely studied and applied in long term financial planning, pension fund management, energy production and trading, supply chain management and inventory control \cite{PfP14}, as it can flexibly characterize the dynamic dependent structure of random data process.
A key component in the multistage decision making modelling is the dynamic decision criterion, i.e., the objective function for the multistage stochastic optimization model.
One of the most widely adopted objective functions is the multistage expected utility models, which can be also
understood
as a kind of multistage risk function where the utility function at each stage characterizes the dynamic preference of the DM \cite{Duf10}.
There are basically three types of multistage expected utility models:
terminal utility model,
additive utility model
and recursive utility model
\cite{CCL17}.
Like terminal risk measures,
the terminal utility model may lead to time inconsistent optimal policies
as it only measures the utility of reward at the terminal stage \cite{CLW12}.

The additive utility model, which is most extensively studied,
considers the sum of utilities of rewards at different stages,
thus the DM's intertemporal preferences
are risk neutral
\cite{Duf10,SDR14}.
The recursive utility model,
also known as stochastic differentiable utility
in continuous time setting, characterizes DM's
nonlinear intertemporal preferences. The model has a natural connection with time consistency
of the optimal 
policy.
Important contributions include the recursive expected utility \cite{Koo1960}
and
the well-known Kreps-Porteus utility which is recursive, but not necessarily expected utility \cite{KrP1978}.
However, traditional expected utility theory has received many criticisms
for its failure to explain
some experimental observations and
theoretical puzzles such as Allais paradox.
Rank-dependent expected utility theory and cumulative prospect theory are subsequently proposed to address the drawbacks, see monograph by Puppe \cite{Puppe12} for an overview of the development of the theories.
In dynamic setting,
Hu et al.~\cite{HJZ2020} study a
continuous-time portfolio selection model
where a sequence of time-dependent
probability weighting functions and
 rank-dependent utility functions are used to
capture a DM's overweighting and underweighting behaviours
on tail losses/rewards at different stages, see also \cite{CSZ20} for
empirical studies.

In all these works, the DM's utility functions are assumed to be known exactly
and fixed in the decision making process.
However, as we discussed earlier, the DM's utility function
may be ambiguous and this motivates us to propose a
PRO model for the multistage decision making process.
Moreover, many studies argue that 
utility functions may be state-dependent.
The most widely adopted approach is to consider a parametric form of utility functions where the parameters are state-dependent.
For instance, Strub and Li \cite{StL2020}
consider a sequence of S-shaped utility functions parameterized by
a sequence of state-dependent reference points
and show that
failing to update the reference point as state changes may
lead to time inconsistent investments.
Likweise, He et al.~\cite{HSZ20} consider a series of
 state-dependent  distortion functions
 when they apply the  rank-dependent expected
 utility theory to continuous time investment problems.
 Bj\"{o}rk et al.~\cite{BMZ14} adopt a state-dependent
  risk-aversion parameter in the multistage mean-risk model. 
There is also a specific stream of research on so-called
{\em habit formation utility} where the DM's consumption habit level and
her/his utility at a particular stage and/or state
is determined by the historical consumption process \cite{Duf10}.

In this paper,
we will also use the habit formation utility model and concentrate
on a situation where the DM's utility at each stage is ambiguous but it is possible to
use partially available information to construct a set (called ambiguity set later on) of plausible utility functions which capture the DM's preferences.
Two ways are proposed to construct the ambiguity set. One is to
use the pairwise comparison approach which are widely used in the literature of
PRO models and behavioural economics. The other is to construct
a ball of utility functions centered at
a nominal utility function under some pseudo-metrics. The main challenge to be
tackled is to develop efficient
computational schemes for solving the
resulting multistage PRO models.


As far as we are concerned, the main contributions of the paper can be summarized as follows.

First, we propose a multistage PRO model where the DM's utility preferences at different stages depend on not only the current stage and  state but also the history of the underlying random data process leading to the state.
We introduce a definition of ambiguity set comprising
certain sequences of state-wise utility functions and a maximin optimization model where the optimal policy
is based on the worst-case summed 
expected utility values of the random reward functions at different stages computed with the ambiguity set.

Second, we introduce the concept of rectangularity of
the ambiguity set of utility functions. Under some moderate conditions,
we show the multistage maximin problem with state-dependent ambiguity set is time consistent and  demonstrate through a simple example that the problem is time inconsistent when the utility functions are state-independent.

Third, by utilizing the time consistency,
we derive a recursive formula for solving the multistage PRO problem when the utility functions are
state-dependent.
For the ambiguity of general utility functions, error bounds for both the ambiguity set and the optimal value
are derived when the utility functions in the ambiguity set are approximated by piecewise linear utility functions at each stage.
To tackle time inconsistency and nonlinearity in solving maximin problem at each stage, we propose
a scenario tree approach which reformulates the holistic maximin problem
as a single mixed integer linear program, {
we 
propose to use the
stochastic dual dynamic programming (SDDP)
method and the nested Benders' decomposition (NBD) method to solve the state-dependent multistage PRO models.}

Fourth, we apply the proposed PRO model and the computational scheme to
 a multistage investment-consumption problem
 {
and carry out
comparative analysis
on the efficiency of the computational schemes as well as
out-of-sample performances of the state-dependent
and state-independent models. The preliminary results
show that the state-dependent preference robust model solved by SDDP algorithm
displays overall superiority.

 }


The rest of the paper is organized as follows.
Section \ref{sec-msp} defines the multistage expected utility maximization models to be discussed in this paper.
Section 3 introduces the robust counterparts and
discusses rectangularity and time consistency of the models when the utility functions are state-dependent.
Section 4 details construction of the ambiguity set with two approaches: pairwise comparisons and $\zeta$-ball.
In the latter approach, a piecewise linear approximation
approach is proposed to approximate the general utility functions and error bounds are derived.
{Section 5 discusses computational schemes for solving multistage PRO models by
the scenario tree method and
dynamic programming algorithms. 
Section 6 reports a number of 
numerical results and
comparative analysis.}
Finally, Section 7
gives some concluding remarks.
Due to the limitation of pages in the main body of the paper, all proofs
{ of the technical results,
some examples and
the detailed algorithmic procedures
of SDDP and NBD methods
are
moved to
 Electronic Companions}.

\section{Multistage expected utility models}\label{sec-msp}
We begin by introducing notions and notations that are commonly used in multistage stochastic optimization.
Let $\xi=\{\xi_t\}_{t=1}^T$ be a stochastic process defined on some probability space $(\Omega, \mathcal F,\mathbb{P})$, where $\xi_t:\Omega\to \mathbb R^{d_t}$ is a random vector supported on $\Xi_t$ for $t=1,\dots,T$.
For simplicity of notation,
we write $\xi_{[t]}$ for historical information $(\xi_1,\dots,\xi_t)$.
Let $\mathcal F_t$ denote the sigma algebra in the sample space $\Omega$  generated (induced) by $\xi_{[t]}$, that is,
$ \mathcal F_t =\left\{(\xi_{[t]})^{-1} B: B \in \mathscr{B}(\Xi_{[t]}) \right\}$,
where $\mathscr{B}(\Xi_{[t]})$ denotes the Borel sigma algebra of set $\Xi_{[t]}:=\Xi_1\times\cdots\times\Xi_t$.
By convention, we assume that there is an initial state $\xi_0$ which is deterministic
and
corresponds to the deterministic events $\mathcal F_0 =\{\emptyset,\,\Omega\}$.
Consequently, we have
$\mathcal F_0\subset \mathcal F_1\subset \dots\subset \mathcal F_t\subset \mathcal F_{t+1} \dots \subset \mathcal F_T \subset \mathcal F.$
As $\mathcal F_t$ is generated by $\xi_{[t]}$, we denote $\mathbb{E}_{|\mathcal{F}_t}[\cdot]:=\mathbb{E}[\cdot\mid \xi_t]$ for simplicity and $\mathbb{E}_{|\mathcal{F}_0}[\cdot]:=\mathbb{E}[\cdot]$.

Let
$\mathcal{L}^p(\Omega, \mathcal F,\mathbb{P};\mathbb{R})$
denote
the set of random variables $\psi:
(\Omega, \mathcal F,\mathbb{P})\to \mathbb{R}$
with finite $p$-th moments, i.e.,
$\int_{\Omega} |\psi(\omega)|^p d\mathbb{P}(\omega)<\infty$, {for $p\geq 1$}.
Let $\mathcal{L}^p({\mathbb{R}})$ denote
the set of real functions $u: \mathbb{R}\to \mathbb{R}$  integrable to the $p$-th order and
$\mathcal{L}^p
(\Omega, \mathcal F,\mathbb{P};\mathcal{L}^p({\mathbb{R}}))$
denote the set of random integrable functions $\mathfrak{\hat{u}}:
(\Omega, \mathcal F,\mathbb{P})\to \mathcal{L}^p({\mathbb{R}})$
with finite $p$-th moments.
{
We use
$\mathfrak{{u}_t}(x,\xi_{[t-1]})$ to denote a state-dependent utility function (strictly speaking, we should call it historical-data dependent. {We call it state-dependent for simplification.
This should be distinguished from Markov decision making process
whereas the states are decision dependent).}
Here,  for each fixed $x\in\R$,
the mapping $
\mathfrak{{u}_t}(x,\xi_{[t-1]}(\cdot)
):\Omega\to \R$ is
$\F_{t-1}$-measurable
and for each fixed $\omega\in \Omega$,
$\mathfrak{{u}_t}(\cdot,\xi_{[t-1]}(\omega)):\R\to \R$
is a continuous and non-decreasing function.
The dependence
on $\xi_{[t-1]}(\omega)$ reflects the fact that in general a decision maker's risk preference depends not only on the current state but also on the DM's
past experiences.
To be consistent with the existing PRO models in one-stage decision making problems (see e.g.~\cite{AmD15}),
we assume that the DM's utility preference is not affected by
future uncertainty.
In Section 4, we will explain how the requirement on the
measurability of the utility function may be fulfilled.
In the case that
the utility function is state-independent,
we write $u_t(\cdot)$ for
$\mathfrak{{u}_t}(\cdot,\xi_{[t-1]})$.
}

\subsection{
Models}

We consider the following multistage expected utility maximization problem
\bgeqn
\label{eq-msp-sdep-a}
&&\max_{x_1\in \mathscr{X}_1} \bbe \left[u_1(h_1(x_1,\xi_1))+
\max_{x_2\in \mathscr{X}_2(x_1,\xi_1)}
\LJ{\bbe_{|\F_1}}
\Big[
\mathfrak{u}_2(h_2(x_2,\xi_2),\xi_1)+\right.\nonumber\\
&&\left.
\quad\quad\quad\quad\quad\quad\cdots+
\max_{x_T\in \mathscr{X}_T(x_{[T-1]},\xi_{[T-1]})}
\LJ{\bbe_{|\F_{T-1}}}\big[
\mathfrak{u}
_T(h_T(x_T,\xi_T),\xi_{[T-1]})
\big]
\Big]
\right],
\edeqn
where $h_t: \mathbb{R}^{n_{t}} \times\mathbb{R}^{d_{t}} \rightarrow \mathbb{R}$
is a continuous reward function at stage $t$,
and $\mathfrak{u}_t:\mathbb{R} \times \mathbb{R}^{\sum_{i=1}^{t-1
}d_{i}}
\rightarrow \mathbb{R}$ is the utility function
 characterizing the DM's utility value of the reward at stage $t$, $x_t$ is
 the decision vector,
 $x_{[t]}$ is the historical decision process $(x_1,\dots,x_t)$ till stage $t$, and $\mathscr{X}_t(x_{[t-1]},\xi_{[t-1]})$
is the set of feasible decisions at stage $t$ for $t=2,\cdots,T$,
the expectation at stage 1 is taken with respect to 
the distribution of $\xi_1$,
and the expectation at stage $t$ is taken w.r.t. the distribution of $\xi_t$ conditional on
\LJ{the filtration $\F_{t-1}$, i.e.,} the historical data  $\xi_{[t-1]}$, for $t=2,\dots,T$.
In this setup, the DM chooses an optimal decision $x_t$ from $\mathscr{X}_t(x_{[t-1]},\xi_{[t-1]})$ so that
\bgeq
\LJ{\bbe_{|\F_{t-1}}} \left[\mathfrak{u}_t(h_t(x_t,\xi_t),\xi_{[t-1]})+
\cdots+
\max_{x_T\in \mathscr{X}_T(x_{[T-1]},\xi_{[T-1]})}
\LJ{\bbe_{|\F_{T-1}}} \Big[
\mathfrak{u}_T(h_T(x_T,\xi_T),\xi_{[T-1]}) 
\Big] 
\right]
\edeq
is maximized. The utility function at stage $t$ depends not only on the current stage (indicated by the subscript) but also on the historical state (realization of) $\xi_{[t-1]}$.
The choice of the optimal decision
$x_t$ is independent of the
realizations of $\xi_t$ which means
the decision is made before the realization of uncertainty $\xi_t$,
and it is not a recourse action.
Of course, we can interpret $\mathfrak{u}_t(h_t(x_t,\xi_t),\xi_{[t-1]})$ as the optimal value arising from a recourse action.
In particular, if the random reward function at stage $t$
depends on the current state $\xi_{t-1}$ rather than
state $\xi_t$ at next stage (mathematically replacing
$h_t(x_t,\xi_{t})$ with
$h_t(x_t,\xi_{t-1})$),
then problem \eqref{eq-msp-sdep-a}  can be written as
\bgeqn
\label{eq-msp-sdep-recourse}
&&\max_{x_1\in \mathscr{X}_1} u_1(h_1(x_1,\xi_0))+\bbe
\left[
\max_{x_2\in \mathscr{X}_2(x_1,\xi_1)}
\mathfrak{u}_2(h_2(x_2,\xi_1),\xi_1)+ \right.\nonumber\\
&&\left.
\quad\quad\quad\quad\cdots+
\LJ{\bbe_{|\F_{T-2}}}
\Big[\max_{x_T\in \mathscr{X}_T(x_{[T-1]},\xi_{[T-1]})}
\mathfrak{u}_T(h_T(x_T,\xi_{T-1}),\xi_{[T-1]})
\Big]
\right].
\edeqn
In this formulation, a recourse action $x_t$ is taken before realization of $\xi_{t-1}$ is observed.
We skip the details on recourse actions
so that we may focus on the key issues in this paper.
Note that if we interpret the utility function as the DM's {\em taste} and the distribution of the future uncertainty and of the reward as {\em belief} in the literature of decision analytics, then we can see that
the randomness in $\mathfrak{u}_t$ (the taste) arises from historical data $\xi_{[t-1]}$ whereas the belief is concerned with future uncertainty of rewards.
Unless specified otherwise, we assume that $\mathscr{X}_t(x_{[t-1]},\xi_{[t-1]})$
is a convex and compact subset of $\mathbb{R}^{n_t}$ for $t=1,\cdots,T$.

A simplified version of (\ref{eq-msp-sdep-a}) is
that the utility functions at each stage are state-independent, that is,
\bgeqn\label{eq-msp-indep}
&&\max_{x_1\in \mathscr{X}_1} \bbe\left[u_1(h_1(x_1,\xi_1))+
\max_{x_2\in \mathscr{X}_2(x_1,\xi_1)}
\LJ{\bbe_{|\F_1}}\Big[u_2(h_2(x_2,\xi_2))+\right.\nonumber\\
&&\left.
\quad\quad\quad\quad\quad\quad\quad\quad\cdots+
\max_{x_T\in \mathscr{X}_T(x_{[T-1]},\xi_{[T-1]})}
\LJ{\bbe_{|\F_{T-1}}}\big[u_T(h_T(x_T,\xi_T)) 
\big]
\Big]\right].
\edeqn
In this model, the DM has the same
utility preference
in all states at stage $t$ regardless of the
overall wealth accumulated over the past $t-1$ stages.
In the case when the DM takes an identical view
on utilities over all stages, the model may be further
simplified to
\bgeqn\label{eq-msp-indep-same}
&&\max_{x_1\in \mathscr{X}_1} \bbe\left[u(h_1(x_1,\xi_1))+
\max_{x_2\in \mathscr{X}_2(x_1,\xi_1)}
\LJ{\bbe_{|\F_{1}}}\Big[u(h_2(x_2,\xi_2))+\right.\nonumber\\
&&\left.
\quad\quad\quad\quad\quad\quad\cdots+
\max_{x_T\in \mathscr{X}_T(x_{[T-1]},\xi_{[T-1]})}
\LJ{\bbe_{|\F_{T-1}}}\big[
u(h_T(x_T,\xi_T))
\big]
\Big]
\right],
\edeqn

In the classical multistage stochastic programming (MSP) models, utility functions
at different stages may be different but they are pre-determined
at the beginning
which means the DM cannot adjust
her/his utility at later stages as dynamic stochastic environment changes
and this is inconsistent with practical decision making process.
Indeed, many theoretical and empirical studies
show that the utility function should depend on the current and/or historical state
\cite{CSZ20,HJZ2020,Mon20}.
For example, the DM's utility of wearing a mask in the year of 2020 (at the peak of COVID-19 epidemic) must be totally different from the utility in normal circumstances. Even at different stages of the COVID-19 epidemic, the utility of wearing a mask varies.
Here we assume that at stage $t$, the
DM can adjust her/his utility function according to the current and historical states.
Of course, regardless of the stage that the DM is in,
her/his utility function  must be specified prior to the decision making at the stage, i.e.,
$u_t$ does not depend on $\xi_t$.

In this paper, we will focus on the case when the utility functions
are ambiguous, and we will see that utility preference robust formulations of the above three models  will have completely different properties.

\subsection{Reformulations}

In model \eqref{eq-msp-sdep-a}, optimal decision at stage $t$
is a vector in $\mathbb{R}^{n_t}$. However, if we view the decision making from
stage 1, then we may regard it as a random function of $\xi_{[t-1]}$.
Consequently, we may reformulate
the multistage expected utility maximization problem \eqref{eq-msp-sdep-a} as
\begin{equation}\label{eq-msp-sdep}\begin{array}{cl}
\max\limits_{\x_{[
T]}}  & \mathbb{E}
\big[u_1(h_1\left( x_{1},\xi_1\right))
+\mathfrak{u}_2(h_{2}\left( {\x_{2}(\xi_1)}, \xi_{2}\right),{\xi_{1}})+\cdots+\mathfrak{u}_T(h_T \left( {\x_{T}(\xi_{[T-1]})}, \xi_{T}\right),\xi_{[T-1]})\big] \\
\text { s.t. }& x_{1} \in \mathscr{X}_{1}, {\x_{t}(\xi_{[t-1]})} \in \mathscr{X}_{t}\left( {\x_{[t-1]}(\xi_{[t-2]})}, \xi_{[t-1]}\right),\ \inmat{for}\  t=2, \ldots, T,
\end{array}\end{equation}
where the expectation is taken w.r.t. the distribution of $\xi_{[T]}$
and we write $\x_{[1,T]}$ {(or $\x_{[T]}$ when the decision process starts from the initial stage)} for a sequence of decisions $(x_1,\x_2(\cdot)\dots,\x_T(\cdot))$, which is also known as an implementable policy.
{ We denote $\x_{[t-1]}(\xi_{[t-2]}):=( x_{1},\x_{2}(\xi_{1}),\ldots,\x_{t-1}(\xi_{[t-2]}) )$ the $\xi_{[t-2]}$
-dependent historical decision process up to stage $t-1$}.
The reformulation is fundamentally related to Bellman's principle in dynamic programming that an optimal policy at the initial planning stage is consistent
with the optimal decisions at each
of the remaining stages, we will come back to this in Section 3.
The reformulation requires some moderate conditions, see Lemma \ref{lemma-interchange-expectation} on Page 13 for the two stage case.
Likewise, we can reformulate \eqref{eq-msp-indep} as
\begin{equation}\label{eq-msp-indep-comp}
\begin{array}{cl}
\max\limits_{x_{[
T]}}  & \mathbb{E}
\big[u_1(h_1\left( x_{1},\xi_1\right))
+\mathfrak{u}_2(h_{2}\left( {\x_{2}(\xi_1)}, \xi_{2}\right))+\cdots+\mathfrak{u}_T(h_T \left( {\x_{T}(\xi_{[T-1]})}, \xi_{T}\right))\big] \\
\text { s.t. }& x_{1} \in \mathscr{X}_{1}, {\x_{t}(\xi_{[t-1]})} \in \mathscr{X}_{t}\left( {\x_{[t-1]}(\xi_{[t-2]})}, \xi_{[t-1]}\right),\ 
t=2, \ldots, T.
\end{array}
\end{equation}

A practical application of the multistage utility maximization model
is multistage portfolio selection problem, see
an example in \ref{ec-example-portfolio}.

\section{Robust models}\label{sec-robust}
In the multistage expected utility optimization models that we
presented in the previous section,
the true utility functions which capture the DM's preferences at each stage
are assumed to be known.
This assumption may not be satisfied in practice as we discussed in the introduction section.
It motivates us to consider a robust model where the optimal decision at each stage is based on the worst-case utility function
from a set of plausible utility functions.
Since the robust model is essentially built upon von Neumann-Morgenstern expected utility theory,
we make a blanket assumption as follows.

\begin{assumption}
\label{Assu:VNM-DM}
The DM's preference  can be represented by von Neumann-Morgenstern expected utility theory
{and is consistent at each state.}
\end{assumption}

{ The assumption on the preference
consistency means that at each state, there exists at least one VNM's utility function which can be used to represent all of the elicited/observed preferences of the DM at the state.
}
In practice, { however},
DM's utility preferences may be inconsistent
due to cognitive biases \cite{TvK74} and/or elicitation errors \cite{AmD15}.
Bertsimas and O'Hair \cite{BeO13} and
Armbruster and Delage \cite{AmD15}  proposed
some approaches
to handle the issue.
Here by introducing Assumption \ref{Assu:VNM-DM},
we restrict our discussions to the consistent preference
case so that we may focus on the key challenges  arising from multistage maximin problems.

\subsection{Multistage PRO models}
In the expected utility maximization model
(\ref{eq-msp-sdep-a}) or its equivalent formulation (\ref{eq-msp-sdep}),
the sequence of dynamic decisions is made with respect to a sequence of utility functions $\{\mathfrak{u}_t\}$. However,
a DM may not have complete information
to identify  a sequence of true  utility preferences
but it is possible to use partial information
to build  an ambiguity set of plausible utility functions.
We begin with a formal definition of the ambiguity set which captures DM's utility preferences at each stage.

\begin{definition}[Ambiguity set of
utility functions]
\label{def-pro-set}
\LJ{Let $\mathbb{U}$ be the set of all continuous, bounded and monotonically increasing functions in $\mathcal{L}^p({\mathbb{R}})$ and
$\mathcal{U}_t$ be a $\mathcal{F}_{t-1}$-measurable set-valued mapping.
For any given $\xi_{[t-1]}$, $\mathcal{U}_t(\xi_{[t-1]})$ is a
subset of $\mathbb{U}$, for $t=1,\cdots,T$.}
Define the ambiguity set
\bgeqn
\label{eq:Ambiguityset-U-SD}
\mathcal{U} :=\{\vec{\mathfrak{u}} \mid \vec{\mathfrak{u}}=[\mathfrak{u}_1,\mathfrak{u}_2,\ldots,\mathfrak{u}_T]^\top,\
\mathfrak{u}_t{(\cdot,\xi_{[t-1]})}\in \mathcal{U}_t(\xi_{[t-1]}), \text{for any}\;  \xi_{[t-1]},
\ t=1,\ldots,T\},
\edeqn
where $\mathfrak{u}_1(\cdot,\xi_{[0]})=u_1(\cdot)$ is a
real-valued function in the deterministic ambiguity set $U_1$.
{ We say that
the sequence of  utility functions
$\{\mathfrak{u}_t{(\cdot,\xi_{[t]})}\}$
is state-independent if 
$\mathcal{U}_t$ is $\F_0$-measurable, i.e., a deterministic set, for $t=1,\ldots,T$. In
this case,
we 
write $u_t(\cdot)$ for
$\mathfrak{u}_t(\cdot,\xi_{[t-1]})$
and $\mathcal{U}_t$
for $\mathcal{U}_t(\xi_{[t-1]})$.}
\end{definition}

In this definition, each utility function in the set $\mathcal{U}_t(\xi_{[t-1]})$
depends on historical information $\xi_{[t-1]}$, which means the DM's utility preference
at { state $\xi_{t-1}$
is affected not only by the current state $\xi_{t-1}$ (
at the point of the decision making)
but also the earlier experiences.
The $\F_{t-1}$-measurability of $\mathcal{U}_t$ paves the way for the rectangularity of the ambiguity set to be stated in the forthcoming Proposition~\ref{prop-rectangular}.
}
A classical example of such state-dependent utility function is
the habit-formation utility model where
a DM's utility $\mathfrak{u}_t(c_t,h_t)$ at stage $t$
depends on both  the current consumption $c_t$ and
the historical habit level of consumption $h_t=\sum_{j=1}^{t}\alpha_j c_{t-j}$ where
$\alpha_j, j=1,\cdots,t$ are positive numbers.
The latter can be understood as
historical path $\xi_{[t-1]}$ dependent, see \cite{Duf10,DuS86}.
Likewise,  an investor who has experienced tough economic
circumstances in the past may be more risk averse at the current stage.
The structure of the ambiguity set depends on available information
in concrete decision making problems, we will come back to details
about this in Section \ref{sec-detail-set}.

To mitigate the model risk arising from ambiguity of the true utility functions
in the decision-making process under model \eqref{eq-msp-sdep},
we propose a robust counterpart
where the optimal policy is based on the worst-case sequence of utility functions:
\begin{eqnarray}
\label{eq-mpro-tc}
&\inmat{(MS-PRO-SD)}\nonumber\\
&\max\limits_{\x_{[
T]}}  & \inf\limits_{\vec{\mathfrak{u}}\in\mathcal{U}}\mathbb{E}\left[{u}_1(h_1\left(x_{1},\xi_1\right))
+\mathfrak{u}_2(h_{2}\left({\x_{2}(\xi_1)}, \xi_{2}\right),\xi_{1})+\cdots+\mathfrak{u}_T(h_T\left(
{x_{T}(\xi_{[T-1]})}, \xi_{T}\right),\xi_{[T-1]})\right] \nonumber\\
&\text { s.t. }& x_{1} \in \mathscr{X}_{1}, {\x_{t}(\xi_{[t-1]})} \in \mathscr{X}_{t}\left( {\x_{[t-1]}(\xi_{[t-2]})}, \xi_{[t-1]}\right),\ 
t=2, \ldots, T.
\end{eqnarray}
Here the maximin robust formulation is
based on a holistic view at the very beginning of the decision making process
on both the optimal policy  and the expected utility.
Specifically, instead of considering the maximin robust formulation at each stage, we compute, for every sequence of feasible decisions $x_{[
T]}$, the worst-case expected utility
$$
\mathbb{E}\left[\mathfrak{u}_1(h_1\left(x_{1},\xi_1\right))
+\mathfrak{u}_2(h_{2}\left({\x_{2}(\xi_1)}
, \xi_{2}\right),\xi_{1})+\cdots+\mathfrak{u}_T(h_T\left(
{x_{T}(\xi_{[T-1]})}, \xi_{T}\right),\xi_{[T-1]})\right]
$$
with a sequence of utility functions  $\vec{\mathfrak{u}}$ from the ambiguity set.
The optimal policy is subsequently identified via the largest worst-case expected utility value.
This kind of maximin robust approach is consistent with the philosophy of robust optimization,
particularly the recent multistage distributionally robust optimization models \cite{Sha16}.
We call it  multistage utility preference robust optimization models.


In the case that the utility functions are independent of states,
we may obtain a PRO counterpart for model
\eqref{eq-msp-indep-comp}:
\begin{equation}
\label{eq-pro-intc}
\inmat{(MS-PRO-SID)}\
\begin{array}{cl}
 \max\limits_{\x_{[
 T]}} & \inf\limits_{\vec{u}\in{U}}\mathbb{E}\left[u_1(h_1\left(x_{1},\xi_1\right))+u_2(h_{2}\left(
 {\x_{2}(\xi_1)}, \xi_{2}\right))+\cdots+u_T(h_T\left(
 {x_{T}(\xi_{[T-1]})}, \xi_{T}\right))\right] \\
\text { s.t. }& x_{1} \in \mathscr{X}_{1}, {\x_{t}(\xi_{[t-1]})} \in \mathscr{X}_{t}\left( {\x_{[t-1]}(\xi_{[t-2]})}, \xi_{[t-1]}\right),\
t=2, \ldots, T,
\end{array}
\end{equation}
where $\vec{u}=[u_1,u_2,\ldots,u_T]^\top$ and
${U} \subset
\mathcal{L}^p(\mathbb{R})\times \cdots \times \mathcal{L}^p(\mathbb{R})$
is an ambiguity set of the vectors of utility functions in product form.

\subsection{Time consistency}

An important and widely accepted
practice in multistage stochastic programming
is that the optimal policy 
determined at stage 1 should be consistent with the optimal sub-policy
to be set at stage $t$
for $t\geq 1$, which is known as time consistency or
Bellman's optimality principle \cite{ADE07,CLW12,WaF11}.
The principle
is not automatically
fulfilled in the multistage PRO models
 unless the ambiguity set of utility functions
 is structured properly. This motivates us to introduce the next definition.

\begin{definition}[Time consistency of dynamic policy]
A multistage PRO model is  said to be {\em time consistent}
if any optimal policy for the multistage PRO model over the entire time horizon
also satisfies the local optimality conditions of the sub-PRO model from period $t$ to period $T$, for any given historical $\xi_{[t-1]}$, for
all $t = 2, \ldots , T$.
\end{definition}

In multistage
risk minimization problems, the time consistency of the optimal dynamic policy
can be achieved if the corresponding
multistage risk measure is time consistent.
The concept of time consistency on multistage risk measure
characterizes an order keeping 
relationship among 
different stages:
given two investment positions $A$ and $B$, if $A$ is at least as good as
$B$ under
a specific risk measure at some future time $\tau$, and they are identical between now (time $t$) and the future time $\tau$,
then $A$ is at least as good as
$B$ under the same measure from today ($t$)'s perspective \cite{BoF06,Rus10}.
All time consistent risk measures can be written in a nested form \cite{Rus10}.

In multistage distributionally robust optimization,
time consistency of the optimal dynamic policy is related to
the structure of the dynamic ambiguity set of probability distributions.
If the distributionally robust counterpart can be written
in a nested form of stage-wise conditional distributionally robust counterparts,
known as the {\em rectangular set}
or {\em recursive multiple-priors set} \cite{EpS03,Sha16},
then the optimal policy is time consistent
and the dynamic programming equation 
may follow \cite{Sha16}.

Likewise, the time consistency of the optimal dynamic policy of the multistage
preference robust optimization problem
relies on the structure of the preference ambiguity set.
We shall define a property on the decomposability
of the preference set.
To this end, we introduce the concept of rectangularity of the ambiguity set of utility functions.

\subsubsection{Rectangularity of the ambiguity set}
To ease the exposition, we denote the reward function
$h_t\left(x_{t}(\xi_{[t-1]}), \xi_{t}\right)$ by
an $\mathcal{F}_{t}$-adaptable random variable $Z_t$.

\begin{definition}[Rectangularity of
the ambiguity set]
Let $\mathcal{U}$ be a nonempty set of utility sequences $\vec{\mathfrak{u}}$,
$\mathcal{U}$  is said to be {\em rectangular} if
\begin{equation}
\begin{array}{ll}
&
\inf\limits_{\vec{\mathfrak{u}}\in \mathcal{U}}\mathbb{E}
\left[{u}_1(Z_1)+\mathfrak{u}_2(Z_2,\xi_1)+\cdots+\mathfrak{u}_T(Z_T,\xi_{[T-1]})\right] \\
=&\inf\limits_{ u_1\in
\mathcal{U}_1}
\mathbb{E}
\Bigg[ u_1(Z_1)+\inf\limits_{u_2\in
\mathcal{U}_2(\xi_{[1]})
} \mathbb{E}_{|\mathcal{F}_1}\bigg[ u_2(Z_2)+\cdots  +\inf\limits_{u_T\in
\mathcal{U}_T(\xi_{[T-1]})
} \mathbb{E}_{|\mathcal{F}_{T-1}}\big[ u_T(Z_T)\big]
\bigg]\Bigg] \\
\end{array}
\label{eq:Rectangl}
\end{equation}
holds for {any $\{x_{t}\}$, $\{\xi_{t}\}$ and $\{Z_t:=h_t(x_t(\xi_{[t-1]}),\xi_{t})\}$}, where
\begin{equation}\label{eq-decomp}
\begin{array}{ll}
\mathcal{U}_{t}\left(\xi_{[t-1]}\right) & := \mathcal{U}_{t}\left( \vec{\mathfrak{u}}_{[1,t-1]}(\cdot,\xi_{[t-1]}) , \xi_{[t-1]}\right) \\
& =\left\{u_t \in \mathcal{L}^p(\mathbb{R}
) \bigg| \begin{array}{l}  \exists \vec{u}_{[t+1,T]} \in\mathcal{L}^p(\mathbb{R})\times \cdots\times \mathcal{L}^p(\mathbb{R})\\ \inmat{ such that }\left[
\vec{\mathfrak{u}}_{[1,t-1]}(\cdot,\xi_{[t-1]})
;\; u_t ;\; \vec{u}_{[t+1,T]}\right]^{\top} \in \mathcal{U}\end{array}\right\}, \\
&\qquad\qquad\qquad\qquad\qquad\qquad\qquad\qquad\qquad \forall \xi_{[t-1]} \in \mathcal{L}^p(\Omega,\mathcal{F}_{t-1},\mathbb{P};\mathbb{R}^{d_1}\times\dots\mathbb{R}^{d_t-1}).
\end{array}
\end{equation}
\end{definition}

{
The property has two important components: one is the
interchangeability of
infimum operation (with respect to the utility function) and
the conditional expectation operation,
which indicates
the consistency
between the global worst-case utility
sequence $\vec{\mathfrak{u}}_{[1,T]}$ and
the local worst-case
utility functions $\vec{\mathfrak{u}}_{[t,T]}$; 
the other is the consistency that
each of the current utility function
${u}_{t}
\in \mathcal{U}_{t}\left(\xi_{[t-1]}\right)$
can be paired up with the DM's potential
utility sequence at the remaining stages $[t+1,T]$
given the utility sequence over stage $[1,t-1]$,
to form an element in the specified ambiguity set ${\cal U}$.
This 
is similar to
time consistency in \cite{DeI15,Sha09} and local property in \cite{Rus10}.
In the forthcoming discussions, we will show that
the ambiguity set defined in Definition \ref{def-pro-set} satisfies the rectangularity.
}




Analogous to the rectangularity of
distributionally ambiguity set for multistage DRO problems \cite{Sha16} and the conditional (state-dependent) decomposition of uncertainty
set
in multistage
parametric robust
optimization problem \cite{DeI15},
the proposed rectangularity is built on a broad decomposable structure of the inner minimization problem without relying on a specific form of the ambiguity set.
The concept
differs from the  rectangularity in some
MSP literature \cite{PiS21} or MDP literature \cite{Iye05,WKR13}, where 
a product form of sub-ambiguity sets in different stages or states are considered.
As noted by Pichler and Shapiro
\cite{PiS21}, a product form of sub-ambiguity sets
is not enough to guarantee the decomposability of
a multistage DRO problem.
In the multistage PRO problems,
this means a product form with deterministic sub-ambiguity sets may lead to
state-independent PRO problems
which are not rectangular and time inconsistent
(see Appendix \ref{sec-example}).
By adding state-dependent property to sub-ambiguity sets,
we can show in Proposition \ref{prop-rectangular} that
the defined state-dependent preference ambiguity
set in Definition \ref{def-pro-set} is rectangular.


To study the time consistency of the optimal policy of a PRO model, we shall investigate whether the
global optimal solution is consistent with the local optimal solution of the sub-PRO problem over a sub-horizon.
If we consider the sub-PRO model of (MS-PRO-SID) \eqref{eq-pro-intc} from period $t$ to period $T$,
\begin{equation}\label{eq-pro-intc-t}
\begin{array}{cl}
\max\limits_{\bm{x}_{[t,T]}} & \inf\limits_{{\vec{u}_{[t,T]}\in {U}_{[t,T]}}}\mathbb{E}_{|\mathcal{F}_{t-1}}\left[u_t(h_t\left({\bm{x}_{t}(\xi_{[t-1]})},\xi_t\right))+u_{t+1}(h_{t+1}\left({\bm{x}_{t+1}(\xi_{[t]})}, \xi_{t+1}\right))+\cdots+u_T(h_T\left({\bm{x}_{T}(\xi_{[T-1]})}, \xi_{T}\right))\right] \\
\text { s.t. }& {\bm{x}_{s}(\xi_{[s-1]}) \in \mathscr{X}_{s}\left(\bm{x}_{[s-1]}(\xi_{[s-2]}), \xi_{[s-1]}\right),\ s=t, \ldots, T,}
\end{array}
\end{equation}
{where
$\bm{x}_{[t,T]}:=(x_t(\cdot)\dots,x_T(\cdot))$,
}
{${U}_{[t,T]}=\{\vec{u}_{[t,T]}\mid \exists \vec{u}_{[1,t-1]} \inmat{ such that } [\vec{u}_{[1,t-1]},\vec{u}_{[t,T]}]^\top \in U\}$,
we may find that the worst-case utility series $\vec{u}_{[t,T]}$ of the sub-PRO problem (\ref{eq-pro-intc-t})
depends on historical states $\xi_{[t-1]}$ and historical decisions $x_{[t-1]}$. However, 
the worst-case utility series of the global PRO problem (\ref{eq-pro-intc})
is a deterministic function series.
Then, such an inconsistency of the worst-case utility series between the global PRO problem and the sub-PRO problem leads the inconsistency of their optimal solutions.
An example which shows the point is given in Appendix \ref{sec-example}.

In what follows, we will show that the ambiguity set ${\cal U}$
{defined as in (\ref{eq:Ambiguityset-U-SD})}
is rectangular and the PRO model $\inmat{(MS-PRO-SD)}$
is time consistent.
To this end, we introduce an interchangeability principle for the preference robust counterpart.
{
In the literature of stochastic programming and variational
analysis, there have been several results on the principle of interchangeability, see for example
\cite[Proposition 5]{RuS03},
\cite[Theorem 14.60]{RoW09},
\cite[Proposition 6.37, Theorem 7.80]{SDR14} and
\cite[Theorem 2.1]{STR06}.
While these results are derived under some different conditions,
they are all stated in the finite dimensional space. Here we
need a principle of interchangeability which is in the infinite-dimensional space.




Let $\mathbb{Z}$ be a Polish space with
Borel field $\B(\mathbb{Z})$ and $\Omega$ be a sample 
space associated with filtration $\F$ and measure
$\mathbb{P}$.
We say a random function $f: \mathbb{Z} \times \Omega \rightarrow {\mathbb{R}}$ is a {\em Carath\'edory function} \cite{SDR14}
if $\omega \rightarrow f(z, \omega)$ is $\F$-measurable for every fixed $z\in \mathbb{Z}$ and
the function $z \rightarrow f(z,\omega)$ is continuous for almost every fixed $\omega\in\Omega$.

\begin{lemma}\label{lemma-interchange-expectation}
 Consider a Polish
 space $\mathbb{Z}$ and a probability space $(\Omega,\F,
 \mathbb{P}
 )$.
 Let $Z:\Omega \rightrightarrows \mathbb{Z}$ be a
 $\F$-measurable set-valued mapping with closed values.
 Let $\mathfrak{M}$ be
 a linear space of measurable functions $ \mathfrak{z}: \Omega \rightarrow \mathbb{Z}$
 and
 $\mathfrak{M}_Z := \{ \mathfrak{z} \in \mathfrak{M}: \mathfrak{z}(\omega) \in {Z}(\omega)
 \subset \mathbb{Z},\; \text{for a.e.}\ \omega\in \Omega \}$.
Let $f: \mathbb{Z} \times \Omega \rightarrow \bar{{\mathbb{R}}}$
 be a 
 Carath\'edory function.
 Suppose that either $\mathbb{E}\left[ \left(\inf_{z \in {Z}(\omega) } f(z, \omega)\right)_+\right]< \infty$ or $\mathbb{E}\left[\left(-\inf_{z \in {Z}(\omega)} f(z, \omega)\right)_+\right]< \infty$, where $(a)_+=\max(0,a)$.
 Then
\bgeqn
\mathbb{E}\left[\inf _{z \in {Z}(\omega)
} f(z, \omega)\right]=\inf_{\mathfrak{z} \in \mathfrak{M}_Z} \mathbb{E}\left[F_{\mathfrak{z}}\right],
\label{eq:interchange}
\edeqn
where $F_{\mathfrak{z}}(\omega):=f(\mathfrak{z}(\omega), \omega)$.
\end{lemma}
The main difference with  existing results
in the literature is that here
the infinite dimensionality of variable
$z$ poses
more rigorous requirements
on the measurability. For this, we
exploit
some fundamental results  about measurability of random functions in infinite-dimensional space from monograph \cite{AuF09}.
Another main difference is that here we consider $Z(\omega)$, which is a random set of functions in the space $\mathbb{Z}$ rather than a deterministic set of functions as in \cite[Proposition 5]{RuS03}),
\cite[Theorem 14.60]{RoW09}, or
\cite[Proposition 6.37, Theorem 7.80]{SDR14}.
Because of the differences, we include a proof in \ref{EC-interchangibility}
for completeness.

{ With the new version of the principle of interchangebility, we are able to
address the interchangeability in the expected utility case. The next lemma states this.
}

\begin{lemma}\label{lem-int-change}
Let $\mathscr{U}:=\{{u}\in \mathcal{L}^p({\mathbb{R}\rightarrow \mathbb{R}})\mid u
\text{ is a bounded and continuous
function}\}$ and $ \mathcal{U}(\tau)$ be a nonempty
subset of $\mathscr{U}$.
Let
$\mathfrak{M}_{\mathcal{U}}:=\{\mathfrak{u}\in \mathcal{L}^p(\mathbb{R} \times \mathbb{R}^{d}
\rightarrow \mathbb{R}) \mid \mathfrak{u}(\cdot,\tau) \in
\mathcal{U}(\tau),  \text{ for any } \tau\in\R^d \},
$
where $\mathcal{L}^p(\mathbb{R} \times \mathbb{R}^{d}
\rightarrow \mathbb{R})$ denotes the set of
all state-dependent
Lebesgue
integrable utility functions $\mathfrak{u}(\cdot,\cdot)$.
Let $(\Omega,\F,\mathbb{P})$ be a probability space
with sigma algebra $\F$ and probability measure $\mathbb{P}$.
Let $\eta:\Omega\to\R$ be a random variable
representing reward and $\xi:\Omega\to\R^d$ be a random vector representing state.
Then
\begin{equation}
\label{eq:lemma2-condi-expt-F-xi}
\begin{array}{ll}
&\inf\limits_{\mathfrak{u}\in \mathfrak{M}_{\mathcal{U}}}\mathbb{E}\left[\mathfrak{u}(\eta,\xi)\right]
= \mathbb{E}\left[  \inf\limits_{u\in \mathcal{U}(\xi)} \mathbb{E}\left[ u(\eta)\mid \F_{\xi
} \right]   \right],
\end{array}
\end{equation}
where $\mathcal{F}_{\xi}$ is the minimal sub-sigma algebra of $\F$  to which 
$\xi$ is
adapted.
\end{lemma}

{
We give an explanation
about the relation (\ref{eq:lemma2-condi-expt-F-xi}).
Observe first that $\mathfrak{M}_{\mathcal{U}}$
is a set of deterministic utility functions
in $ \mathcal{L}^p(\mathbb{R} \times \mathbb{R}^{d}
\rightarrow \mathbb{R})$ such that
$\mathfrak{u}(\cdot,\tau) \in \mathcal{U}(\tau)$
for any $\tau\in \R^d$.
The left-hand side of
equation (\ref{eq:lemma2-condi-expt-F-xi}) denotes
the worst-case expected utility value
for a given pair of
reward function $\eta$ and state $\xi$
when $\mathfrak{u}$ is restricted to set $\mathfrak{M}_{\mathcal{U}}$.
The right-hand side of (\ref{eq:lemma2-condi-expt-F-xi})
is the expectation of the worst-case expected utility value of $\eta$ conditional on $\F_\xi$
when the utility function is taken from
${\cal U}(\xi)$. Here the set ${\cal U}(\xi)$
depends on the state $\xi$. The difference between
$\mathfrak{M}_{\mathcal{U}}$ and  ${\cal U}(\xi)$
is that the former stipulates a set-valued mapping  from $\R^d$
to a set of utility functions with the specific structure ($\mathfrak{u}(\cdot,\tau) \in \mathcal{U}(\tau)$ for any state  $\tau$) whereas the latter is the image of the set-valued mapping when $\tau=\xi$.
We refer readers to \ref{EC-Lemma2} for the details of the proof.


}

}

With Lemma~\ref{lem-int-change}, we are ready to deliver
the rectangularity of the ambiguity set introduced in Definition \ref{def-pro-set}  in the next proposition.

\begin{proposition}\label{prop-rectangular}
{
Let ${\cal U}$ be defined as in Definition \ref{def-pro-set}.
Then (\ref{eq:Rectangl}) holds.}
\end{proposition}

In problem \eqref{eq-mpro-tc}, at each stage, the utility function is taken in the worst-case sense from a random set depending on historical information.
By Proposition \ref{prop-rectangular},
problem \eqref{eq-mpro-tc} can be rewritten as
\begin{equation}\label{eq-mpro-tc-dy}\begin{array}{cl}
\max\limits_{\x_{[
T]}}  & \inf\limits_{ u_1\in \mathcal{U}_1} \mathbb{E}
\Bigg[ u_1(h_1\left(x_{1},\xi_1\right))+\inf\limits_{u_2 \in \mathcal{U}_2(\xi_{[1]})} \mathbb{E}_{|\mathcal{F}_1}\bigg[  u_2(h_{2}\left(
{\x_{2}(\xi_1)}, \xi_{2}\right))+\cdots \\
&\qquad\quad +\inf\limits_{u_T \in \mathcal{U}_T(\xi_{[T-1]})} \mathbb{E}_{|\mathcal{F}_{T-1}}\big[ u_T(h_T\left(
{x_{T}(\xi_{[T-1]})}, \xi_{T}\right))\big]\cdots\bigg]\Bigg] \\
\text { s.t. }& x_{1} \in \mathscr{X}_{1},\ {\x_{t}(\xi_{[t-1]})} \in \mathscr{X}_{t}\left( {\x_{[t-1]}(\xi_{[t-2]})}, \xi_{[t-1]}\right),\
t=2, \ldots, T.
\end{array}\end{equation}
Here, ${U}_1={U}_1(\xi_{[0]})$ relies only on $\xi_{[0]}$
and thus is deterministic.

The reformulations from \eqref{eq-mpro-tc} to \eqref{eq-mpro-tc-dy} rely on the inter-changeability between operation
$\inf\limits_{\mathfrak{u}_t\in \mathcal{U}_t}$
and the expectation
$\mathbb{E}
$, $t=2,\ldots,T$.
However, the $\inf\limits_{u_t\in \mathcal{U}_t(\xi_{[t-1]})}$ cannot be further interchanged with $\mathbb{E}_{|\mathcal{F}_{t-1}}$
as the worst-case utility function $u_t$
and the preference ambiguity set $\mathcal{U}_t(\xi_{[t-1]})$ are determined
by information ${\mathcal{F}_{t-1}}$.
At each stage, we would meet a single period preference robust optimization problem which can be viewed as the well-studied static PRO models.

\subsubsection{Time consistency of \inmat{(MS-PRO-SD)}}

From \eqref{eq-mpro-tc-dy}, we can
see that the multistage preference robust utility function can be described in a nested form. Analogous to the multistage risk aversion
models \cite{CCL17,Rus10} and multistage distributionally robust optimization
models \cite{Sha16}, the nested form guarantees
the time consistency of the optimal dynamic policy of problem \eqref{eq-mpro-tc-dy}, i.e., it can be solved in a recursive dynamic programming procedure.

{
\begin{theorem}\label{theorem-dp-reform}
Let $\mathcal{U}_t(\xi_{[t-1]})$,
$t=2,\cdots,T$, and
$\mathcal{U}$
be defined as those 
in Definition 1. 
Assume:
(a) for $t=2,\cdots,T$,
the utility functions in $\mathcal{U}_t(\xi_{[t-1]})$ are
Lipschitz continuous
with modulus being bounded by
$\kappa(\xi_{[t-1]})$
and
$\mathcal{U}_t(\xi_{[t-1]})$ is a compact set for any $\xi_{[t-1]}$;
(b) the reward function $h_t:\R^{n_t}\times
\R^{d_t}\to \R$ is 
Lipschitz continuous in $x_t$ with
modulus $\sigma_t$ where
$\bbe_{\F_{t-1}}[\sigma_t]<+\infty$,
for $t=1,\cdots,T$;
(c) for $t=2,\cdots,T$, the feasible set $\mathscr{X}_t(x_{[t-1]},\xi_{[t-1]})$
is compact for { any}
fixed $x_{[t-1]}$ and $\xi_{[t-1]}$ and
as set-value mapping of
$x_{[t-1]}$,
$\mathscr{X}_t(\cdot,\xi_{[t-1]})$
is Lipschitz continuous.
Then the (MS-PRO-SD) problem has the following dynamic programming reformulation:
\begin{equation}
\label{eq:thm-recursive-formula}
\begin{array}{l}
V_{t}\left(x_{[t-1]}, \xi_{[t-1]}\right) =\max\limits_{x_{t} \in \mathscr{X}_{t}\left(x_{[t-1]}, \xi_{[t-1]}\right)}\inf\limits_{ u_t \in \mathcal{U}_t(\xi_{[t-1]})}\mathbb{E}_{|\mathcal{F}_{t-1}}\left[ u_{t}\left(h_t(x_{t}, \xi_{t})\right)+V_{t+1}\left(x_{[t]}, \xi_{[t]}\right)\right]
\end{array}
\end{equation}
for $t=1, \ldots, T$,
where
$V_{T+1}(\cdot, \cdot) := 0$, and $V_1$
coincides with
 the optimal value of problem {\rm (MS-PRO-SD)}. The optimal policy of {\rm (MS-PRO-SD)} is time consistent. 
\end{theorem}
}

The proof is given in \ref{EC-theorem-dp-reform}.
In some applications,
construction
of the scenario (historical path) dependent preference ambiguity set $\mathcal{U}_t(\xi_{[t-1]})$ is a bit complicated for practical use. There are potentially two ways to simplify.
One is to
consider the Markovian preference ambiguity set $\mathcal{U}_t(\xi_{t-1})$ which relies only
on the randomness at current stage $\xi_{t-1}$.
{The other is
to consider a discrete approximation of the random process $\xi_{[t-1]}$.}



\begin{remark}
It is worth noting that, we only need to interchange the order of
the conditional expectation with the infimum operator
as the considered utility function is
additive in both probability and  temporal dimension.
For some utility considering nonlinear elasticity of intertemporal substitution, such as recursive or temporal utility functions,  Lemma \ref{lemma-interchange-expectation} is not enough to guarantee 
time consistency as we need a stronger version which studies the interchangeability between the infimum operator and both conditional expectation operator and  utility functions in previous stages.
\end{remark}

\section{Construction of the ambiguity set}
\label{sec-detail-set}

The structure of the ambiguity set of utility functions
is determined by available information
on the DM's utility preferences at each stage. Here we follow two approaches
which are widely used in the literature of PRO models:
pairwise comparison \cite{AmD15,GuX21,HuM15} and nominal utility approach \cite{HuS17,WaX20,WaX21}.
The former elicits DM's preferences via pairwise comparison questionnaires such as a lottery vs
a deterministic gain/loss and translates the preferences (answers)
into a characterization/specification of the true
utility function, whereas the latter constructs an ambiguity set of utility functions
in a neighborhood of a plausible nominal utility function.

To simplify the discussion, here we restrict the domain of utility functions
to $[a,b]$ which means the range of reward function $h_t(x_t,\xi_t)$ falls within the interval, and normalize the utility function with $u(a)=0$ and $u(b)=1$ for $t=1,\cdots, T$. The normalization does not affect the utility preferences.
Let $\mathscr{U}$ be the set of continuous and normalized non-decreasing
utility functions in $\mathcal{L}^p([a,b])$ with  $u(a) =0,\ u(b) =1$, and $\mathscr{U}^c$ a subset  where the utility functions are concave.

\subsection{Pairwise comparisons}

We begin with the pairwise comparison approach which is based on Von Neumann-Morgenstern's expected utility theory, that is, any preference between two random prospects
by the DM can be represented by expected utility of the random prospects albeit
such a utility is unknown. To narrow down the scope of the true utility function, one may
design more pairwise comparison questionnaires and ask the DM to make a choice on each pair of them, see Armbruster and Delage  \cite{AmD15} for details.

In a dynamic decision making process, the DM's preference
depends on not only the { stage she/he is standing,
but also the historical path.}
The latter is particularly important because the DM's preference
may be affected by the current environment.
For instance, an investor in a bull market
may prefer high growth stocks with higher tolerance to volatility,
while in a bear market, she/he may prefer less volatile
stocks even with a lower return rate. This means her/his answer to the same
questionnaires may be affected by her/his risk attitude under different macro-market conditions.
This motivates us to introduce ambiguity set of
state-dependent utility functions in Definition \ref{def-pro-set} by setting
\begin{eqnarray}
\label{eq:U-moment}
\mathcal{U}^P_t(\xi_{[t-1]})
:=\left\{u\in \mathscr{U}^c
\left|  \begin{array}{l}
  z_{k}(\xi_{[t-1]})\mathbb{E}\left[u\left(W_{k}\right)\mid \xi_{[t-1]}\right] \geq z_{k}(\xi_{[t-1]}) \mathbb{E}\left[u\left(Y_{k}\right)\mid \xi_{[t-1]}\right],\\\qquad\qquad\qquad \inmat{for}\; k=1,\ldots,K,\\
 \inmat{Lip}(u) \leq L(\xi_{[t-1]})
  \end{array} \right.\right\}
  \end{eqnarray}
where $\{(W_k,Y_k), k=1,\cdots,K\}$
is a set of  prospects for pairwise comparison.
Note that this set may be fixed or evolved 
over the process, which means the questionnaires
used in stage $t-1$ will
be used in stage $t$, but the DM might have different
answers due to the change of stage/state.
Here $z_{k}(\xi_{[t-1]})\in \{+1,-1,0\}$
is used to indicate the choice of the decision-maker at stage $t$.
If the DM prefers $W_k$ to $Y_k$, then
 $z_{k}(\xi_{[t-1]})=1$,
otherwise $z_{k}(\xi_{[t-1]})=-1$.
In the case of no preference,
$z_{k}(\xi_{[t-1]})=0$.
Consequently the ambiguity of the utility functions is time-dependent as opposed to static
in one stage PRO models.
{Under Assumption \ref{Assu:VNM-DM},
$\mathcal{U}^P_t(\xi_{[t-1]})
\neq \emptyset$.}
$ \inmat{Lip}(u) \leq L(\xi_{[t-1]}) $ means that $u$ is Lipschitz continuous with modulus bounded by $L(\xi_{[t-1]})$,
$\mathscr{U}^c$ involves the  concavity constraint of $u$.
It
means that the DM is risk averse at all stages
over the time horizon.
Obviously $\mathcal{U}_t^{P}(\xi_{[t-1]})$ is a convex set.

{Armbruster and Delage \cite{AmD15} show that a static PRO problem with pairwise comparison ambiguity set
can be reformulated as an LP,
when the supports of $W_k$ and
$Y_k$ are finite.
In Section \ref{sec-sce-pariwise}, we will derive a tractable
LP reformulation of multistage PRO problem with the ambiguity set 
defined as $\mathcal{U}_t^{P}(\xi_{[t-1]})$.}
{
Note that
the ambiguity set constructed
as such
in (\ref{eq:U-moment})
has some limitations: the utility function is independent of past decisions or
the current financial position (e.g. cumulative wealth up to date).
The reformulation under the scenario tree will be much more complex if the ambiguity set is 
decision-dependent or  wealth-dependent (when $z_k(s)$
in \eqref{eq-sce-reformu2}
is replaced by $z_{k,s}(x(s^-))$, it contains bi-linear terms and 0-1 valued non-smooth functions).
}

Let
$$
\mathscr{S}:=\{a\} \cup \bigcup_{k=1}^{K}\left(\operatorname{supp}\left(Y_{k}\right) \cup \operatorname{supp}\left(W_{k}\right)\right) \cup \{b\}
$$
and $N:=|\mathscr{S}|$ denotes the cardinality of set
$\mathscr{S}$,
let $\{{y}_{j}\}_{j=1,\ldots,N}$ be the ordered sequence of points in $\mathscr{S}$ with fixed
${y}_{1}=a$, ${y}_{N}=b$.
In the forthcoming discussions, we will
use utility values at $\mathscr{S}$ to
characterize the property of the true unknown
utility function. {The details are given in EC.4.1 and EC.4.3.
}

\subsection{$\zeta$-ball approach}\label{zeta-ball}
In some decision making problems,
a DM may be able to ``roughly''  identify a
nominal utility function which captures most of the DM's preferences
either elicited through empirical data, or
based on subjective judgement or from partially
elicited preference information, but there is
incomplete information to tell whether the nominal utility is the true utility.
Under such a circumstance,
it might be sensible to consider
a set of utility functions near the nominal utility
and base the optimal decision on the worst-case utility function
from the set. We call this a nominal approach.

We begin by defining a kind of semi-distance between any two utility functions.
Let $\mathscr{G}$ be a set of measurable functions defined over $[a,b]$.
For  $u,v\in \mathscr{U}$,
 define the semi-distance
 between $u$ and $v$ by
 {
$\dd_{\mathscr{G}} (u,v):=\sup_{g \in \mathscr{G}}
 \left|\int_a^b g(z)du(z)- \int_a^b g(z)dv(z) \right|$,
where the integrals are in the sense of Lebesgue-Stieltjes integration,
$g$ might be viewed as
a test function and $\dd_{\mathscr{G}} (u,v)=0$
means that for all of the test functions in $\mathscr{G}$,
there is no difference between $u$ and $v$ albeit that $u\neq v$.
}
In the case that the utility functions in $\mathscr{U}$ are normalized with $u(a)=0, u(b)=1
$, $\dd_{\mathscr{G}} (u,v)$ resembles the pseudo-metric of $\zeta$-structure in probability theory.
In this paper, we are interested in two cases:
	\begin{equation}
		\mathscr{G}=\mathscr{G}_{L}:=\left\{g: [a,b]\to\mathbb{R} \mid \ g\text{ is } \text{ Lipschitz continuous with modulus bounded by 1}\right\}
\label{eq:G-kant}
	\end{equation}
	and
\bgeqn
\mathscr{G} = \mathscr{G}_{I} =\left\{ g:=\mathbbm{1}_{(a, z]}(\cdot) \mid
	\inmat{where}\;
 \mathbbm{1}_{(a, z]}(s):=
	1 \; \text{if }\; s\in (a, z] \; \inmat{and} \;
	0 \; \text{otherwise}
	\right\}.
\label{eq:G-kolm}	
\edeqn
The former corresponds to the Kantorovich metric, denoted by $\dd_K(u,v)$,
and the latter corresponds to the uniform Kolmogorov metric.
With the definition of the $\zeta$-metric, we are ready to introduce the definition of $\zeta$-ball in the space of the utility functions $\mathscr{U}$. We begin with the static case.

\begin{definition}[Static $\zeta$-ball of utility functions] Let $\mathscr{U}$ be the set of all continuous, non-decreasing utility functions defined over interval $[a,b]$,
$u(a)=0$, $u(b)=1$ for all $u\in \mathscr{U}$.
For a fixed $\tilde{u}\in \mathscr{U}$, the $\zeta$-ball of utility functions in $\mathscr{U}$ centered at $\tilde{u}$ with radius $r$ under metric $\dd_\mathscr{G}$ is defined as:
\bgeqn
\mathbb{B}(\tilde{u},r) :=\left\{ {u}\in \mathscr{U} \mid \dd_\mathscr{G}({u},\tilde{u})\leq r\right\}.
\label{eq:U-zeta-ball}
\edeqn
\end{definition}


In this paper, our focus is on the construction of an ambiguity set of
a sequence of state-dependent utility functions  specified in
Definition \ref{def-pro-set}.

\begin{definition}[{Dynamic $\zeta$-ball based ambiguity set of utility functions}]
Consider the ambiguity set in \eqref{eq:Ambiguityset-U-SD}.
For given nominal state-dependent utility function
$\tilde{\mathfrak{u}}_t(\cdot,\xi_{[t-1]})\in \mathscr{U}$,
define for all $\xi_{[t-1]}$,
\begin{eqnarray}
\label{eq:U-normial-dynamic}
\mathcal{U}^{\mathbb{B}}_t(\xi_{[t-1]}) :=\left\{u\in \mathscr{U}^c\left| \begin{array}{l}
u\in \mathbb{B}(\tilde{\mathfrak{u}}_t(\cdot,\xi_{[t-1]}),r_t(\xi_{[t-1]})), \\
 \inmat{Lip}(u) \leq L(\xi_{[t-1]})
\end{array}\right.
\right\}.
\end{eqnarray}
\end{definition}
In this formulation, $\mathcal{U}^{\mathbb{B}}_t(\xi_{[t-1]})$
is determined by the center
 $\tilde{\mathfrak{u}}_t(\cdot,\xi_{[t-1]})$, the radius $r_t(\xi_{[t-1]})$ and
the pseudo-metric $\dd_\mathscr{G}$.
The choice of functions in set $\mathscr{G}$ may depend on
{ historical data $\xi_{[t-1]}$.}
The nominal utility function
$\tilde{\mathfrak{u}}_t(\cdot,\xi_{[t-1]})$ may be identified from empirical data, that is,
the utility function is inferred from the DM's past utility preferences
and the feedback (represented by historical path
$\xi_{[t-1]}$).
As the time goes on, we can collect more data/information about the DM's preferences
and subsequently a more accurate nominal utility as well as
a smaller radius.

{

\begin{proposition}\label{prop-compact}
Let
$\mathcal{U}^{\mathbb{B}}_t(\xi_{[t-1]})$
{ and $\mathcal{U}^{P}_t(\xi_{[t-1]})$}
be defined as in
(\ref{eq:U-moment}) and
(\ref{eq:U-normial-dynamic}).
Then the following assertions hold.
\begin{itemize}
    \item[(i)] For each fixed $\omega$,
    $\mathcal{U}^{\mathbb{B}}_t(\xi_{[t-1]}(\omega))$
and $\mathcal{U}^{P}_t(\xi_{[t-1]}(\omega))$ are compact sets.

\item[(ii)]
If $\tilde{\mathfrak{u}}_t(\cdot,\xi_{[t-1]}), r_t(\xi_{[t-1]})$ and $ L(\xi_{[t-1]})$
are continuous in $\xi_{[t-1]}$,
then
$\mathcal{U}^{\mathbb{B}}_t(\xi_{[t-1]}(\cdot))$
and $\mathcal{U}^{P}_t(\xi_{[t-1]}(\cdot)$ are $\F_{t-1}$-measurable.

\item[(iii)] The ambiguity $\cal{U}$ constructed from 
$\mathcal{U}^{\mathbb{B}}_t(\xi_{[t-1]})$ ($\mathcal{U}^{P}_t(\xi_{[t-1]})$)
in the form of \eqref{eq:Ambiguityset-U-SD} satisfies
the rectangularity (the conditions in Definition \ref{def-pro-set}).

\end{itemize}
\end{proposition}
}

The next proposition quantifies the difference between
two $\zeta$-balls of utility functions with different nominals and radii under the
Hausdorff distance. For any two sets $U, V\subset \mathscr{U}$, define
$
\mathbb{D}(U,V;\dd_{\mathscr{G}})
:= \sup_{u\in U}\inf_{v\in V} \dd_{\mathscr{G}}(u,v),
$
which quantifies the deviation of $U$ from $V$
and
$
\mathbb{H}(U,V;\dd_{\mathscr{G}})  := \max\left\{\mathbb{D}(U,V;\dd_{\mathscr{G}}), \mathbb{D}(V,U;\dd_{\mathscr{G}})\right\},
$
the Hausdorff distance between the two sets under the pseudo-metric.

\begin{proposition}\label{prop-zeta-err}
Let $u,v\in \mathscr{U}$ and $r_1,r_2\in \mathbb{R}_+$. Then
\bgeqn
\mathbb{H}(\mathbb{B}(u,r_1), \mathbb{B}(v,r_2);   \dd_\mathscr{G}) \leq \dd_\mathscr{G}(u,v) + |r_2-r_1|.
\label{eq:H-two-balls-U}
\edeqn
In particular, if $u^*$ is the true utility function and $u_{ref}$ is a nominal utility function, then
 \bgeq
\mathbb{H}(u^*, \mathbb{B}(u_{ref},r);   \dd_\mathscr{G}) \leq \dd_\mathscr{G}(u^*,u_{ref}) + r.
\edeq
\end{proposition}

Inequality (\ref{eq:H-two-balls-U}) means that the Hausdorff distance of two balls
is bounded by the distance of their centers
plus the difference of the radii. With the proposition,
we are ready to present a multistage PRO model with
the ambiguity set defined via \eqref{eq-mpro-tc-dy} and \eqref{eq:U-normial-dynamic}
as follows:
\begin{equation}\label{eq-mpro-tc-dy-nomial}
\begin{array}{cl}
\max\limits_{\x_{[
T]}}  & \inf\limits_{
u_1\in \mathcal{U}^{\mathbb{B}}_1
} \mathbb{E}
\Bigg[ {u}_1(h_1\left(x_{1},\xi_1\right))+\inf\limits_{u_2\in \mathcal{U}^{\mathbb{B}}_2(\xi_{[1]})}
\mathbb{E}_{|\mathcal{F}_1}\bigg[  u_2(h_{2}\left(
{\x_{2}(\xi_1)}, \xi_{2}\right))+\cdots \\
&\qquad\quad +\inf\limits_{u_T \in
\mathcal{U}^{\mathbb{B}}_T(\xi_{[T-1]})
} \mathbb{E}_{|\mathcal{F}_{T-1}}\left[ u_T(h_T\left(
{x_{T}(\xi_{[T-1]})},\xi_{{T}}\right))\right]\cdots\bigg]\Bigg] \\
\text { s.t. }& x_{1} \in \mathscr{X}_{1}, {\x_{t}(\xi_{[t-1]})} \in \mathscr{X}_{t}\left(
{\x_{[t-1]}(\xi_{[t-2]})}, \xi_{[t-1]}\right),t=2, \ldots, T.
\end{array}\end{equation}
Here, $\mathbb{B}(\tilde{\mathfrak{u}}_1,r_1)$ in $\mathcal{U}^{\mathbb{B}}_1$ relies only on
deterministic nominal utility $\tilde{\mathfrak{u}}_1$ and radius $r_1$.
By Theorem \ref{theorem-dp-reform},
\eqref{eq-mpro-tc-dy-nomial} can be computed by the following dynamic programming equation,
\begin{equation}\label{eq-mpro-tc-nomial-dy}
\begin{array}{l}
V_{t}\left(x_{[t-1]}, \xi_{[t-1]}\right) =\max\limits_{x_{t} \in \mathscr{X}_{t}\left(x_{[t-1]}, \xi_{[t-1]}\right)}\inf\limits_{ u_t \in
\mathcal{U}^{\mathbb{B}}_t(\xi_{[t-1]})
}\mathbb{E}_{|\mathcal{F}_{t-1}}\left[ u_{t}\left(h_t(x_{t}, \xi_{t})\right)+V_{t+1}\left(x_{[t]}, \xi_{[t]}\right)\right].
\end{array}
\end{equation}

From computational point of view, problem
(\ref{eq-mpro-tc-nomial-dy}) is still not easy to solve
because the inner minimization problem is infinite dimensional.
This motivates us to develop an approximation scheme where
the ball of utility functions $\mathbb{B}(\tilde{\mathfrak{u}}_t(\cdot,\xi_{[t-1]}),r_t(\xi_{[t-1]}))$
is approximated by a ball of piecewise
linear utility functions.

\subsubsection{Piecewise-linear utility functions}

Let $y_1<\cdots<y_N$ be an ordered sequence of points in $[a,b]$ with $y_1=a$ and $y_N=b$
and $Y:=\{y_1,\cdots,y_{N}\}$.
Let
$\mathscr{U}_N $
be a class of continuous, non-decreasing, piecewise linear functions
defined over the interval $[y_1,y_N]$
with breakpoints on $Y$. For a given $v\in \mathscr{U}_N$, let
\bgeqn
\mathbb{B}_N(v,r) :=\left\{ u\in \mathscr{U}_N \mid \dd_\mathscr{G}(u,v)\leq r\right\}
\label{eq:U-zeta-ball-PC}
\edeqn
and
\begin{eqnarray*}
\label{eq-nom-approx}
\mathcal{U}^{\mathbb{B}_N}_t(\xi_{[t-1]}) :=\left\{u\in \mathscr{U}^c\left| \begin{array}{l}
u\in \mathbb{B}_N(\tilde{\mathfrak{u}}_t(\cdot,\xi_{[t-1]}),r_t(\xi_{[t-1]})) \\
 \inmat{Lip}(u) \leq L(\xi_{[t-1]})
\end{array}\right.
\right\}
\end{eqnarray*}
for a given nominal utility function
$\tilde{\mathfrak{u}}_t(\cdot,\xi_{[t-1]})\in \mathscr{U}_N$.
We propose to solve (\ref{eq-mpro-tc-nomial-dy}) by solving
\begin{equation}\label{eq-mpro-tc-nomial-dy-PLA}
\begin{array}{l}
\widetilde{V}_{t}\left(x_{[t-1]}, \xi_{[t-1]}\right) =\max\limits_{x_{t} \in \mathscr{X}_{t}\left(x_{[t-1]}, \xi_{[t-1]}\right)}\inf\limits_{ u_t \in
\mathcal{U}^{\mathbb{B}_N}_t(\xi_{[t-1]})
}\mathbb{E}_{|\mathcal{F}_{t-1}}\left[ u_{t}\left(h_t(x_{t}, \xi_{t})\right)+\widetilde{V}_{t+1}\left(x_{[t]}, \xi_{[t]}\right)\right].
\end{array}
\end{equation}
To justify this, we derive the error between
$V_{t}\left(x_{[t-1]}, \xi_{[t-1]}\right)$
and $\widetilde{V}_{t}\left(x_{[t-1]}, \xi_{[t-1]}\right)$.

\begin{remark}
By restricting
the nominal utility function to be piecewise linear, it is easier to estimate the function from the customer/investor in practice.
We preset two endpoints $a,b$ and some values in $[a,b]$,
and then let the customer/investor score on these values under different scenarios.
By collecting and normalizing the scores in different scenarios and linking the utility scores by
a piecewise linear function, we obtain a normalized nominal utility function in each scenario.
The radius describes the error in the scoring process which depends on the credibility of the scores.
\end{remark}

Differing from $\mathbb{B}(u,r)$ defined in (\ref{eq:U-zeta-ball}), the $\zeta$-ball consists of piecewise
linear utility functions only. In what follows, we quantify the difference between
$\mathbb{B}(u,r)$ and $\mathbb{B}_N(v,r)$ under the $\zeta$-metric so that we will be able to
assess
the impact
when we replace the former with the latter in the utility preference robust optimization model.







\begin{lemma}
\label{l-dist-B-B_N-zeta}
Let $u\in \mathscr{U}_N$ and $v\in \mathscr{U}$, let
 $\mathbb{B}_N(u,r)$ and $\mathbb{B}(v,r)$ be defined as in (\ref{eq:U-zeta-ball-PC}) and (\ref{eq:U-zeta-ball}) respectively.
 Assume: (a) $\mathbb{B}(v,r)$ consists of all
 utility functions which are Lipschitz continuous
 with modulus being bounded by $L$,
 (b) $\mathscr{G}=\mathscr{G}_{L}$ or $\mathscr{G}_{I}$.
Then
\bgeqn
\mathbb{H}(\mathbb{B}_N(u,r), \mathbb{B}(v,r);   \dd_\mathscr{G}) \leq \dd_\mathscr{G}(u,v) + 4\max(2,L)\beta_N.
\label{eq:H-two-balls-U-U_N}
\edeqn
In the case when $u=v_N$ is a projection of $v$ on $\mathscr{U}_N$,
\bgeqn
\mathbb{H}(\mathbb{B}_N(v_N,r), \mathbb{B}(v,r);
\dd_\mathscr{G}) \leq
6\max(2,L)\beta_N,
\label{eq:H-two-balls-U-U_N-u-proj-u}
\edeqn
where $L$
and $\beta_N$ are defined as in Proposition \ref{p-PC-appr}.
\end{lemma}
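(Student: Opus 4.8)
The plan is to bound separately the two one-sided deviations $\mathbb{D}(\mathbb{B}_N(u,r),\mathbb{B}(v,r);\dd_\mathscr{G})$ and $\mathbb{D}(\mathbb{B}(v,r),\mathbb{B}_N(u,r);\dd_\mathscr{G})$, combining the convex-combination (``$\lambda$-scaling'') device that underlies \eqref{eq:H-two-balls-U} with the piecewise-linear projection estimate of Proposition~\ref{p-PC-appr}. Two conventions make the two admissibility classes compatible: I read $\mathbb{B}_N(u,r)$ as also enforcing $\mathrm{Lip}(\cdot)\le L$ (as in \eqref{eq-nom-approx}), and I take the centres $u,v$ to be $L$-Lipschitz. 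The underlying facts I would use freely are that $\dd_\mathscr{G}(f,g)$ is the seminorm of $f-g$ induced by a linear functional, so $\dd_\mathscr{G}(\lambda f+(1-\lambda)g,g)=\lambda\,\dd_\mathscr{G}(f,g)$ and $\dd_\mathscr{G}(f,\lambda f+(1-\lambda)g)=(1-\lambda)\,\dd_\mathscr{G}(f,g)$ for $\lambda\in[0,1]$; that $\mathscr{U}$, $\mathscr{U}_N$ and $\{L\text{-Lipschitz}\}$ are convex; and that the projection $P\colon w\mapsto w_N$ of \eqref{eq:u-N} maps $\mathscr{U}$ into $\mathscr{U}_N$, preserving monotonicity, the normalisation, and the Lipschitz modulus.

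For $\mathbb{D}(\mathbb{B}_N(u,r),\mathbb{B}(v,r))$, take $w\in\mathbb{B}_N(u,r)$; it already lies in $\mathscr{U}$ and is $L$-Lipschitz, so it belongs to $\mathbb{B}(v,r)$ whenever $\dd_\mathscr{G}(w,v)\le r$. If instead $\dd_\mathscr{G}(w,v)>r$, set $\lambda=r/\dd_\mathscr{G}(w,v)\in(0,1)$ and $w_\lambda=\lambda w+(1-\lambda)v$; then $w_\lambda\in\mathscr{U}$ is $L$-Lipschitz with $\dd_\mathscr{G}(w_\lambda,v)=r$, hence $w_\lambda\in\mathbb{B}(v,r)$, and $\dd_\mathscr{G}(w,w_\lambda)=\dd_\mathscr{G}(w,v)-r\le\dd_\mathscr{G}(w,u)+\dd_\mathscr{G}(u,v)-r\le\dd_\mathscr{G}(u,v)$. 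This gives $\mathbb{D}(\mathbb{B}_N(u,r),\mathbb{B}(v,r))\le\dd_\mathscr{G}(u,v)$.

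For the reverse deviation, take $w\in\mathbb{B}(v,r)$ and let $w_N=P(w)\in\mathscr{U}_N$. By Proposition~\ref{p-PC-appr}(ii)--(iii), $\dd_\mathscr{G}(w,w_N)\le 2\beta_N$ if $\mathscr{G}=\mathscr{G}_L$ and $\le L\beta_N$ if $\mathscr{G}=\mathscr{G}_I$, hence $\dd_\mathscr{G}(w,w_N)\le\max(2,L)\beta_N$ in both cases. If $\dd_\mathscr{G}(w_N,u)\le r$ we are done; otherwise apply the same $\lambda$-scaling inside $\mathscr{U}_N$, with $\mu=r/\dd_\mathscr{G}(w_N,u)$ and $w_\mu=\mu w_N+(1-\mu)u\in\mathbb{B}_N(u,r)$, to get $\dd_\mathscr{G}(w_N,w_\mu)=\dd_\mathscr{G}(w_N,u)-r\le\dd_\mathscr{G}(w_N,w)+\dd_\mathscr{G}(w,v)+\dd_\mathscr{G}(v,u)-r\le\max(2,L)\beta_N+\dd_\mathscr{G}(u,v)$. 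A triangle inequality then yields $\dd_\mathscr{G}(w,w_\mu)\le\dd_\mathscr{G}(u,v)+2\max(2,L)\beta_N$, so $\mathbb{D}(\mathbb{B}(v,r),\mathbb{B}_N(u,r))\le\dd_\mathscr{G}(u,v)+2\max(2,L)\beta_N$. Taking the larger of the two deviations gives \eqref{eq:H-two-balls-U-U_N} (in fact with constant $2$ in place of $4$). For \eqref{eq:H-two-balls-U-U_N-u-proj-u} I would specialise $u=v_N=P(v)$: Proposition~\ref{p-PC-appr}(ii)--(iii) bounds $\dd_\mathscr{G}(v_N,v)$ by $\max(2,L)\beta_N$, and substituting into \eqref{eq:H-two-balls-U-U_N} gives $5\max(2,L)\beta_N\le 6\max(2,L)\beta_N$.

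The bookkeeping above is routine; the point requiring care --- and the main obstacle --- is that every convex combination and every projection stays admissible, i.e.\ remains monotone, normalised, $L$-Lipschitz, and (where relevant) piecewise linear on $Y$. In particular, the deviation $\mathbb{D}(\mathbb{B}_N(u,r),\mathbb{B}(v,r))$ genuinely needs the members of $\mathbb{B}_N(u,r)$ to be $L$-Lipschitz: a piecewise-linear utility function with an arbitrarily steep piece is $\dd_\mathscr{G}$-far from every $L$-Lipschitz function, so without that restriction this one-sided deviation cannot be controlled by a multiple of $\beta_N$. Hence the reading of $\mathbb{B}_N$ adopted at the outset (matching \eqref{eq-nom-approx}) is essential, not merely convenient; once it is in place, the remainder reduces to triangle inequalities and the exact scaling identities for $\dd_\mathscr{G}$.
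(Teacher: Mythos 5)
Your proof is correct and follows essentially the same route as the paper's: a triangle inequality, the piecewise-linear projection bound of Proposition~\ref{p-PC-appr}, and the convex-combination ($\lambda$-scaling) device to pull a point back inside the relevant ball. The only organizational difference is that the paper first splits off $\dd_\mathscr{G}(u,v)$ via $\mathbb{H}(\mathbb{B}_N(u,r),\mathbb{B}(v,r))\le \mathbb{H}(\mathbb{B}_N(u,r),\mathbb{B}(u,r))+\dd_\mathscr{G}(u,v)$ and then disposes of one deviation direction by asserting $\mathbb{B}_N(u,r)\subset\mathbb{B}(u,r)$, whereas you bound both one-sided deviations directly against the centre $v$ (obtaining the slightly sharper constant $2$ in place of $4$); your explicit insistence that members of $\mathbb{B}_N(u,r)$ be $L$-Lipschitz is exactly the convention the paper relies on implicitly when it claims that inclusion, so making it explicit is a point in your favour rather than a divergence.
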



We are now ready to present the main result of this section.


\begin{theorem}[Error bound]\label{theorem-error-bound}
Let $V_{t}\left(x_{[t-1]}, \xi_{[t-1]}\right)$
and $\tilde{V}_{t}\left(x_{[t-1]}, \xi_{[t-1]}\right)$
be defined as in (\ref{eq-mpro-tc-nomial-dy})
and \eqref{eq-mpro-tc-nomial-dy-PLA}, respectively.
Let $\{\tilde{\mathfrak{u}}_t(\cdot,\xi_{[t-1]})\}$ be a
sequence of nominal utility functions
and $\{\tilde{\mathfrak{u}}_t^N(\cdot,\xi_{[t-1]})\}$
its piecewise linear approximations.
Let
$$
\beta_N(\xi_{[t-1]}):= \max_{i=2,\cdots,N} (y_i-y_{i-1}),
$$
where the breakpoints are chosen
according to historical data $\xi_{[t-1]}$.
Assume that $\tilde{\mathfrak{u}}_t(\cdot,\xi_{[t-1]})$ is Lipschitz continuous with modulus $L(\xi_{[t-1]})$.
Then
\bgeqn
\left|V_{t}\left(x_{[t-1]}, \xi_{[t-1]}\right)-\tilde{V}_{t}\left(x_{[t-1]}, \xi_{[t-1]}\right)\right|\leq
\sum_{s=t}^T 6\mathbb{E}\left[
\max(2,L(\xi_{[s-1]}))\beta_N(\xi_{[s-1]}) \mid \mathcal{F}_{t-1}
\right]
\label{eq:errr-bnd-v}
\edeqn
for $t=1,\dots,T$. In the case when
$\beta_N(\xi_{[s-1]})$ and $L(\xi_{[s-1]})$ are independent of states,
\bgeq
\left|V_{t}\left(x_{[t-1]}, \xi_{[t-1]}\right)-\tilde{V}_{t}\left(x_{[t-1]}, \xi_{[t-1]}\right)\right|\leq  6 (T-t+1) \max(2,L)\beta_N.
\edeq
\end{theorem}

\subsubsection{Kantorovich ball}

Let $\tilde{u}
\in \mathscr{U}_N$. We consider a ball in the space of $\mathscr{U}_N$
with the Kantorovich metric
\bgeqn
\mathbb{B}_K(\tilde{u}
,r) = \left\{u\in \mathscr{U}_N \mid \dd_K(u,\tilde{u}
)\leq r\right\}.
\label{eq:Kant-ball-utl}
\edeqn
  In what follows, we derive tractable formulation for computing $\dd_K(u,\tilde{u})$.
Let $g\in \mathscr{G}$ where $\mathscr{G}$ consists of all Lipschitz continuous functions defined on $[a,b]$ with modulus bounded by $1$.
 By definition
$$
\int_a^b g(t)du(t) = \sum_{j=2}^N \beta_j\int_{y_{j-1}}^{y_j} g(t)dt,
$$
where $\beta_j$ denotes the slope of $u$ at interval
$[y_{j-1}, y_j]$. Since for each $g\in  \mathscr{G}$,
$-g\in  \mathscr{G}$,
$$
\dd_K(u,\tilde{u}) = \sup_{g\in \mathscr{G}}\sum_{j=2}^N (\beta_j-\tilde{\beta}_j)\int_{y_{j-1}}^{y_j} g(t)dt,
$$
where $\tilde{\beta}_j$ denotes the slope of $\tilde{u}$ at interval
$[y_{j-1}, y_j]$.
Note that in this formulation,
 $\dd_K(u,\tilde{u})$ depends on the slopes of $u,\tilde{u}$ rather than their function values,
$ \sum_{j=2}^N \beta_j({y_{j}}-{y_{j-1}}) =u(b)-u(a) =1 $,
$  \sum_{j=2}^N \tilde{\beta}_j({y_{j}}-{y_{j-1}}) =u(b)-u(a) =1$. 
 Let $w_j :=\int_{y_{j-1}}^{y_j} g(t)dt$ and
$z_j=g(y_j)$, $j=2,\dots,N$. Since
$|g(y)-g(y_{j-1})|\leq y-y_{j-1}$ for all $y\in [y_{j-1},y_j]$,
we have
$$
z_{j-1}(y_{j}- y_{j-1})-\frac{1}{2}(y_{j}- y_{j-1})^2 \leq  w_j\leq z_{j-1}(y_{j}- y_{j-1})+\frac{1}{2}(y_{j}- y_{j-1})^2
$$
for $j=2,\cdots,N$.
Likewise, since
$|g(y_{j})-g(y)|\leq y_{j}-y$ for all $y\in [y_{j-1},y_j]$,
we have
$$
z_{j}(y_{j}- y_{j-1})-\frac{1}{2}(y_{j}- y_{j-1})^2 \leq  w_j\leq z_{j}(y_{j}- y_{j-1})+\frac{1}{2}(y_{j}- y_{j-1})^2
$$
for $j=2,\cdots,N$.
Consequently
\begin{subequations}
\label{eq:Kant-u-v-LP}
\begin{eqnarray}
&\dd_K(u,\tilde{u})=&\nonumber\\
&\displaystyle \max_{w_2,\cdots,w_N, z_1,\cdots,z_N}&\sum_{j=2}^N  (\beta_j-\tilde{\beta}_j)w_j\\
&\inmat{s.t.} &  w_j\leq z_{j-1}(y_{j}- y_{j-1})+\frac{1}{2}(y_{j}- y_{j-1})^2,\ j=2,\cdots,N, \\
&&-w_j\leq -z_{j-1}(y_{j}- y_{j-1})+\frac{1}{2}(y_{j}- y_{j-1})^2,\ j=2,\cdots,N, \\
&&  w_j\leq z_{j}(y_{j}- y_{j-1})+\frac{1}{2}(y_{j}- y_{j-1})^2,\  j=2,\cdots,N, \\
&&-w_j\leq -z_{j}(y_{j}- y_{j-1})+\frac{1}{2}(y_{j}- y_{j-1})^2,\ j=2,\cdots,N.
\end{eqnarray}
\end{subequations}
Problem (\ref{eq:Kant-u-v-LP}) is a linear program.
Using Lagrange duality, we can reformulate it as
 \begin{subequations}
\label{eq:Kant-u-v-LP-dual}
\begin{eqnarray}
\displaystyle \min_{\lambda,\mu,\rho,\phi}
&& \frac{1}{2}\sum_{j=2}^N  (\lambda_j + \mu_j + \rho_j + \phi_j)(y_{j}- y_{j-1})^2   \\
\inmat{s.t.}\quad                  &&  \tilde{\beta}_j -\beta_j + \lambda_j-\mu_j + \rho_j - \phi_j =0,\  j=2,\cdots,N, \\                       && (\mu_{2}-\lambda_{2})(y_{2}-y_1) =0, \\
&& (\mu_{j+1}-\lambda_{j+1})(y_{j+1}-y_j) + (\rho_{j}-\phi_{j})(y_{j}-y_{j-1})   =0, j=2,\cdots,N-1, \\
 && (\rho_{N}-\phi_{N})(y_{N}-y_{N-1})   =0, \\
&& \mu_j,\lambda_j,\rho_j,\phi_j \geq 0, j=2,\cdots,N.
\end{eqnarray}
\end{subequations}

The discussion above shows that we can obtain the Kantorovich distance $\dd_K(u,\tilde{u})$ by solving a linear program.
This will facilitate us to derive tractable formulations for solving
problem \eqref{eq-mpro-tc-nomial-dy-PLA} 
by imbedding (\ref{eq:Kant-u-v-LP-dual}) into
the inner minimization problem.


\subsubsection{Tractable formulation of dynamic program
\eqref{eq-mpro-tc-nomial-dy-PLA}}\label{sec-tractable-sddp}

We can easily incorporate
the tractable formulations of the Kantorovich
ball into
the dynamic programming  equation \eqref{eq-mpro-tc-nomial-dy-PLA}
and develop tractable formulations for the latter.
To comply with the setting in Theorem \ref{theorem-error-bound},
we need to impose Lipschitz continuity on the nominal utility function $\tilde{\mathfrak{u}}_t(\cdot,\xi_{[t-1]})$
and its derivative $\tilde{\mathfrak{u}}_t'(\cdot,\xi_{[t-1]})$ as well as the concavity of the utility function.

\begin{theorem}\label{theorem-dp-sddp}
Consider
\begin{eqnarray}
\label{eq:U-normial-dynamic-addLip}
\mathcal{U}^{K}_t(\xi_{[t-1]}) :=\left\{u\in \mathscr{U}^c\left| \begin{array}{l}
u\in \mathbb{B}_K(\tilde{\mathfrak{u}}^N_t(\xi_{[t-1]}),r_t(\xi_{[t-1]})) \\
 \inmat{Lip}(u) \leq L(\xi_{[t-1]})
\end{array}\right.
\right\}
\end{eqnarray}
for all $\xi_{[t-1]}$.
Suppose that the optimal value function at period $t+1$ is
$\widetilde{V}_{t+1}\left(x_{[t]}, \xi_{[t]}\right)$.
Given historical data $\xi_{[t-1]}$ and historical decision $x_{[t-1]}$,
$\xi_t$ is discretely distributed with $S$ scenarios
$\xi_t^1,\ldots,\xi_t^S$ and appearing probability  $\mathbb{P}(\xi_t=\xi^i_t|\xi_{[t-1]})$, $i=1,\ldots,S$,
then the optimal decision $x_t$ at stage $t$ can be derived by solving the following programming problem,
\begin{subequations}
\label{eq:reformu-gx-kantorovich}
\begin{eqnarray}
&\max & \quad \theta_{N-1}+\sum_{i=1}^{S}\left(\mu_{i,N}+ \mathbb{P}(\xi_t=\xi^i_t|\xi_{[t-1]}) \widetilde{V}_{t+1}\left(x_{[t]}, [\xi_{[t-1]},\xi_t^i]\right)\right)-L(\xi_{[t-1]}) \sum_{j=1}^{N-1} \eta_{j}\\
&&\quad -\tilde{L}(\xi_{[t-1]}) \sum_{j=1}^{N-2}\left(\tau_{j}+\sigma_{j}\right)\left({y}_{j+2}-{y}_{j}\right)  -\sum_{j=2}^N \tilde{\beta}_j w_j - r_t(\xi_{[t-1]}) \varsigma \\
&\inmat{s.t.} & \sum_{j=1}^{N} {y}_{j} \mu_{i, j} \leq \mathbb{P}(\xi_t=\xi^i_t|\xi_{[t-1]})
 h_t(x_{t}, \xi^i_{t}),\ i=1,\ldots,S, \label{eq:dp-kan-con1}\\
&& \mathbb{P}(\xi_t=\xi^i_t|\xi_{[t-1]}) -\sum_{j=1}^{N} \mu_{i,j}=0,\ i=1,\ldots,S, \label{eq:dp-kan-con2}\\
&& \theta_{j-1} {y}_{j-1}-\theta_{j-1} {y}_{j}+v_{j-2}\left({y}_{j-1}-{y}_{j-2}\right)+
w_j+\eta_{j-1}+\tau_{j-1}-\tau_{j-2}
+\sigma_{j-2}-\sigma_{j-1} \geq 0,\\
&&\qquad \qquad j=3, \cdots, N-1, \\
&& \theta_{1} {y}_{1}-\theta_{1} {y}_{2}+
w_2+\eta_{1}+\tau_{1}-\sigma_{1} \geq 0\\
&&\theta_{N-1} {y}_{N-1}-\theta_{N-1} {y}_{N}+v_{N-2}\left({y}_{N-1}-{y}_{N-2}\right)+
w_N+\eta_{N-1}-\tau_{N-2}+\sigma_{N-2} \geq 0, \\
&&\theta_{j-1}-\theta_{j}+\sum_{i=1}^S \mu_{i,j}-v_{j-1}+v_{j}=0,\  j=2, \cdots, N-2 \\
&&\theta_{N-2}-\theta_{N-1}+\sum_{i=1}^{S} \mu_{i,N-1}-v_{N-2}=0, \\
&&  w_j\leq z_{j-1}(y_{j}- y_{j-1})+\frac{1}{2}(y_{j}- y_{j-1})^2 \varsigma ,\  j=2,\cdots,N, \\
&&-w_j\leq -z_{j-1}(y_{j}- y_{j-1})+\frac{1}{2}(y_{j}- y_{j-1})^2 \varsigma,\ j=2,\cdots,N, \\
&&  w_j\leq z_{j}(y_{j}- y_{j-1})+\frac{1}{2}(y_{j}- y_{j-1})^2 \varsigma ,\  j=2,\cdots,N, \\
&&-w_j\leq -z_{j}(y_{j}- y_{j-1})+\frac{1}{2}(y_{j}- y_{j-1})^2 \varsigma,\ j=2,\cdots,N, \\
&&x_{t} \in \mathscr{X}_{t}\left(x_{[t-1]}, \xi_{[t-1]}\right),\ \theta \in \mathbb{R}^{N-1}, v \in \mathbb{R}_+^{N-2}, \eta \in \mathbb{R}_+^{N-1}, \tau \in \mathbb{R}_+^{N-2}, \sigma \in \mathbb{R}_+^{N-2}, \\
&&\mu \in \mathbb{R}_+^{S \times  N},\varsigma\in \mathbb{R}_+, w\in \mathbb{R}_+^{N-1}, z\in \mathbb{R}_+^{N},\label{eq:dp-kan-con-n}
\end{eqnarray}
\end{subequations}
where the optimal value is $\widetilde{V}_{t}\left(x_{[t-1]}, \xi_{[t-1]}\right)$.
\end{theorem}

Theorem \ref{theorem-dp-sddp}
establishes 
a connection between the
optimal value functions at the adjacent stages
by solving an optimization problem.
When the optimal value function at period $t+1$ is concave, the reward function $h_t(\cdot,\xi_{t})$ is concave, the feasible set $\mathscr{X}_{t}\left(x_{[t-1]}, \xi_{[t-1]}\right)$ is compact and convex, the optimization problem \eqref{eq:reformu-gx-kantorovich} becomes a convex programming problem which can be solved efficiently by the interior point method.

{
\section{
Computational schemes
}
In this section, we discuss computational schemes for solving
the time consistent MS-PRO model \eqref{eq-mpro-tc} and the time inconsistent MS-PRO model (\ref{eq-pro-intc}). We proceed with two kinds of approaches: the scenario tree method and dynamic programming algorithms including SDDP/NBD
methods.
The scenario tree approach can be used to solve both \eqref{eq-mpro-tc} and (\ref{eq-pro-intc})
whereas  dynamic programming type algorithms
can only be applied to solve \eqref{eq-mpro-tc} on the basis of \eqref{eq:thm-recursive-formula}.



}

\subsection{ Scenario tree method}

Let $\Xi$
be a discrete support set
and $\{\xi\}_{t=1}^{T}$ a scenario tree.
Denote by $S$ the set of all nodes in the scenario tree,
$S^-$ the set of all non-leaf nodes,
and $S(t)$ the set of nodes at stage $t$.
Denote by $s^-$  the father node of $s$, $s^+$ the set of son nodes of $s$,
$\xi[s]$ the historical scenario from the root node to node $s$.
Denote by $t(s)$ the stage of node $s$, and by {$p_s\geq 0$} the appearing probability of node $s$.
Denote the decision at node $s$ by {$x(s):=\x(\xi[s])$}
and the historical decision from the root node to node $s$ by $x[s]$.
Notice that the decision at node $s$ (at stage $t(s)$)
is made according to the future realizations on the father node $s^-$ at stage $t(s)-1$.
Thus, the realization of the reward 
function at node $s$ is $h_{t(s)}\left(x(s^-),\xi(s)\right)$.
{
For the state-dependent problem,
the ambiguity set of the utility functions
upon historical samples
at node $s$
is denoted by $\mathcal{U}(s):=\mathcal{U}_{t(s)}(\xi[s])$.
For the state-independent problem, the ambiguity set of the utility functions at stage $t$
is $\mathcal{U}_t$, which is deterministic (independent of
historical samples).
}

\textbf{\underline{Time consistent (MS-PRO-SD)}.}
Problem \eqref{eq-mpro-tc} can be reformulated as the following min-max problem:
\begin{equation}\label{eq-mpro-scenario}
\begin{array}{cl}
\max\limits_{\{x(s),s\in S^-\}}  &
\sum\limits_{s\in S^-}
p_{s} \inf\limits_{u_{s} \in \mathcal{U}(s)}\left(\sum\limits_{i \in s^{+}} \frac{p_{i}}{p_{s}} u_{s}\left(h_{t(i)}\left({
x(s)}
, \xi(i)\right)\right)\right) \\
\text { s.t. }& x({1}) \in \mathscr{X}_{1}, x(s) \in \mathscr{X}_{t(s)}\left(x{[s^-]}, \xi{[s]}\right),\ \forall s\in S^-\setminus\{1\},
\end{array}
\end{equation}
where the inner minimization is to calculate the worst-case conditional expected utility value over the son nodes of $s$ across all scenarios
whereas the outer maximization is w.r.t. the optimal decision at node $s$.
{
The objective function is an average of all worst-case utility values at all non-leaf nodes of the tree.
We can reformulate it to indicate more clearly stages
and nodes at each stage:
$$\sum\limits_{s\in S^-}
p_{s} \inf\limits_{u_{s} \in \mathcal{U}(s)}\left(\sum\limits_{i \in s^{+}} \frac{p_{i}}{p_{s}} u_{s}\left(h_{t(i)}\left({
x(s)}
, \xi(i)\right)\right)\right)= \sum_{t=1}^{T-1}\sum\limits_{s\in S(t)}
p_{s} \inf\limits_{u_{s} \in \mathcal{U}(s)}\left(\sum\limits_{i \in s^{+}} \frac{p_{i}}{p_{s}} u_{s}\left(h_{t+1}\left({
x(s)}
, \xi(i)\right)\right)\right).$$

\textbf{\underline{Time inconsistent (MS-PRO-SID)}.}
Problem \eqref{eq-pro-intc} can be reformulated as the following min-max  problem:
\begin{equation}\label{eq-mpro-scenario-tic}
\begin{array}{cl}
\max\limits_{\{x(s),s\in S^-\}}  &
\sum\limits_{t=1}^{T-1}
\inf\limits_{u_{t} \in \mathcal{U}_t}
\left[
\sum\limits_{s\in S(t)} p_{s}
\left(\sum\limits_{i \in s^{+}} \frac{p_{i}}{p_{s}} u_{t}\left(h_{t+1}\left({
x(s)}
, \xi(i)\right)\right)\right)
\right] \\
\text { s.t. }& x({1}) \in \mathscr{X}_{1}, x(s) \in \mathscr{X}_{t(s)}\left(x{[s^-]}, \xi{[s]}\right),\ \forall s\in S(t),\ t=2,\ldots,T-1.
\end{array}
\end{equation}
In both \eqref{eq-mpro-scenario} and \eqref{eq-mpro-scenario-tic}, the decisions $x(s)$ are node-dependent. The only difference is that, in \eqref{eq-mpro-scenario}, the ambiguity sets are node-wise 
and we find
the worst-case utility at each node;
whereas in \eqref{eq-mpro-scenario-tic},
the ambiguity sets are
stage-wise 
and we find the worst-case utility
for {\em all} nodes at each stage.
Further detailed
reformulations
depend on the
structure
of the
scenario tree and thr stage-wise
ambiguity set $\mathcal{U}(s)$/$\mathcal{U}_t$.
We refer readers to Appendix \ref{sec-app-3-tree} for details.
}

\subsection{Dynamic programming methods}

{
The reformulation of
\eqref{eq-mpro-tc}
as \eqref{eq:thm-recursive-formula}
paves the way for us to apply the NBD 
and SDDP methods for solving
the problem. The basic idea of the DP-type
algorithms is to
develop
an approximation of the cost-to-go function
$
{V}_{t}(x_{[t-1]},\xi_{[t-1]})$,
use the optimal solution based on the approximate
problem
as an approximate optimal solution of
\eqref{eq:thm-recursive-formula}
(and ultimately \eqref{eq-mpro-tc})
and improve the approximations over an iterative
forward and backward process.
To this end, we need to make the following standard assumption.

\begin{assumption}\label{assum-linear-recourse}
Denote $\xi_t=(c_t,W_t,b_t,D_t)$.
(a) The decision $x_t$ is $\xi_{[t-1]}$-dependent,
(b) the constraints at recourse stages in the MS-PRO problem have a linear block-diagonal  structure, i.e., only consecutive stages can be linked by linear constraints, i.e., $\mathscr{X}_t=\{ x_t\mid W_{t-1}(\xi_{[t-1]}) x_t=b_{t-1}(\xi_{[t-1]})-D_{t-1}(\xi_{[t-1]}) x_{t-1}\}$, $W_{t-1}$ is invertible or fixed,
$t=2,\ldots,T$,
(c) the reward functions are linear, i.e., $h_t(\xi_t,x_t)=c_t(\xi_t)^\top x_t$.
\end{assumption}

}



{


Assumption \ref{assum-linear-recourse}
ensures  concavity of
$V_t(x_{[t-1]},\xi_{[t-1]})$ (see \eqref{eq:thm-recursive-formula}) in $x_{[t-1]}$ and $\xi_{[t-1]}$ for $t=T,\cdots,2$.
This enables us to
construct piecewise linear approximations of $V_t(x_{[t-1]},\xi_{[t-1]})$,
which underlies NBD
algorithm and SDDP algorithm,
and guarantees the
strong duality of the inner minimization problem of \eqref{eq:thm-recursive-formula} and thus the final convergence of the
algorithms.
Here we give a sketch of the algorithmic structure and refer readers to \ref{sec-dy-algo} for details.

\vspace{2mm}
\begin{breakablealgorithm}\label{algo-1}
    \caption{\small Outline of NBD/SDDP
    algorithms }
\begin{flushleft}
\textbf{Input:} 
A finite set of scenarios $\mathcal{K}$ \\
\textbf{while} $i<N_{\max}$ \textbf{do}
\end{flushleft}
\begin{itemize}
    \item \textbf{for} $k\in\mathcal{K}$, $t=1,\ldots,T$ \textbf{do}
    (forward pass)
    \begin{itemize}
        \item
        \textbf{solve} \eqref{eq:thm-recursive-formula} with current piecewise linear approximation
        of $V_{t}$, denoted by $V^{i}_{t}$,
        and
        trial decision
        $x_{t-1}^{k,i}$
        at stage $t-1$ to obtain
        trial decision
        $x_{t}^{k,i}$
        at stage $t$. Calculate a lower bound of the optimal value.
    \end{itemize}
        \item \textbf{for} $k\in\mathcal{K}$, $t=T,\ldots,1$ do (backward pass)
  \begin{itemize}
        \item
        solve \eqref{eq:thm-recursive-formula} with updated $V^{i+1}_{t}$  and
        trial
        decision $x_{t-1}^{k,i}$ to obtain the optimal value of dual variables.
        \item update $V^{i}_{t}$ by adding
        a cut
        constructed with the optimal values of the dual variables. Calculate an upper bound of the optimal value.
    \end{itemize}
    \item \textbf{terminate} when the gap between the upper and lower bounds
    falls within the prescribed precision.
\end{itemize}
\end{breakablealgorithm}
\vspace{2mm}

There are two ways to proceed. One is
to use a large scenario tree of the multistage decision making process in the sample space
and then find historical path-dependent optimal solutions
by solving
the optimization problem \eqref{eq-mpro-tc-nomial-dy-PLA} in Theorem \ref{theorem-dp-sddp}
embedded into each node on the large scenario tree.
This is known as the
NBD
algorithm.
The other is
to take some i.i.d. samples from all scenarios in the finite-support case or the continuous distribution in the infinite-support case
in solving \eqref{eq-mpro-tc-nomial-dy-PLA},
which is known as SDDP algorithm.
Here we adopt both and compare them with the scenario tree algorithm. We will report  comparative results in the next section.

Convergence of the two algorithms are guaranteed
under
some standard 
conditions. For instance,
when the MS-PRO problem has relatively complete recourse and the distribution of the process $\{\xi_t\}$ is known,
we can show that the
NBD 
algorithm converges to an optimal solution of MS-PRO-SD in finitely many iterations following a similar analysis to that of \cite{BiL11,FuR21}.
If, in addition,
$\xi_t$ is independent of the history $\xi_{[t-1]}$ of the process,
then we may follow \cite{Sha11,FuR21} to show that
the SDDP algorithm converges with probability $1$ to an optimal policy
of MS-PRO-SD in a finite number of iterations.
We skip the details as these are not the main focus of this paper.

}






\section{Numerical tests}

{To examine the performance of the proposed multistage MS-PRO-SD model (\ref{eq-mpro-tc}) and MS-PRO-SID model \eqref{eq-msp-indep-comp}, as well as numerical schemes,}
we carry out a number of 
numerical tests on
a multistage investment-consumption problem on the basis of \cite{Duf10,Fam70} with state-dependent utility functions.

\subsection{An investment-consumption problem}

Consider an investor who plans to use her/his wealth to purchase crude oil and make oil products over $T$ periods. At the beginning of each time period, the investor has two  options: (a) consume all of the wealth for the purchase, and (b) consume part of it and invest the remaining wealth in  $n$ risky assets of a  security market. The objective of the investor is to maximize the
overall expected utility of the oil products consumption.

Let $w_0=1$ denote the normalized initial wealth and
$q_t$ denote the quantity of crude oil that the investor plans to buy at beginning of time period $t$ at price $p_{t-1}$ which is the oil price at the end of time period $t-1$ (alternatively, at the beginning of period $t$). The total cost from the purchase is
$q_tp_{t-1}$ and the remaining wealth is $w_{t-1}-q_t p_{t-1}$,
where $w_{t-1}$ is the wealth at the end of period $t-1$. The remaining wealth is invested in $n$ risky assets with a portfolio $x_t$, where $x_t^i$ is the wealth invested in the $i$-th asset, $i=1,\ldots,n$,
whose 
random return rate, denoted by $r_t^i$, is calculated period-wise, i.e.,
a $\$ 1$ investment at the beginning of period $t$
will generate $\$ (1+r_t^i)$ at the end of the period.
Thus, the wealth of the investor at the end of period $t-1$ is
$w_{t-1}=(e+r_{t-1})^{\top} x_{t-1}$.
This wealth is divided into the consumption $q_t p_{t-1} $ and the further investment $e^{\top} x_{t}$, i.e.,
$w_{t-1}=q_t p_{t-1} + e^{\top} x_{t}$.
A combination of the two equations gives rise to
the following wealth balance equation
$$
e^{\top} x_{t}=(e+r_{t-1})^{\top} x_{t-1} - q_t p_{t-1},\ t=2,\ldots,T-1.
$$
At the initial period $t=1$, we have
$
e^\top x_1 =w_0 - q_1 p_0
$
and at the final period $T$, the investor
must consume all of the wealth on purchase of oil, thus
$
(e+r_{T-1})^{\top} x_{T-1} = q_T p_{T-1}.
$

The utility of the oil products is calculated at the end of each period as follows. We assume that all of the $q_t$ barrels of
oil purchased at the beginning of period $t$ is used to produce
$g_t(q_t)$  quantities of the oil products by the end of period $t$ with unit value $d_t$. Thus the total 
value from the production is  $g_t(q_t) d_t$ and the period-wise utility value is
$\mathfrak{u}_t( g_t(q_t) d_t,h_{[t-1]})$.
Here the investor's utility function depends on all the historical information $h_{[t-1]}$ $=\{p_0,\dots,p_{t-1},$ $d_1,\dots,$ $d_{t-1},r_1,\dots,$ $r_{t-1}\}$.  Based on the discussions above, we
formulate the multistage investment-consumption problem as
\begin{subequations}\label{ex-oil-msp}
\begin{eqnarray}
& \max\limits_{x_{[1,T\!-\! 1]},q_{[1,T]}} & \mathbb{E}\left[\mathfrak{u}_1(g_1(q_1)d_1,h_0)+\mathfrak{u}_2(g_2(q_2)d_2,h_{[1]})+\cdots+\mathfrak{u}_T(g_T(q_T)d_T,h_{[T-1]})\right]  \label{ex-oil-msp-1}\\
& {\rm s.t.} &
e^\top x_1 =w_0 - q_1 p_0,\ x_1\in \mathbb{R}^n_+,\ q_1\in\mathbb{R}_+, \label{ex-oil-msp-2}\\
& & e^{\top} x_{t}=(e\!+\! r_{t-1})^{\top} x_{t-1}-q_t p_{t-1}, x_t(\cdot)\! \in \mathbb{R}^n_+, q_t(\cdot)\! \in\mathbb{R}_+, t\! =\! 2,\ldots,T\!-\! 1, \label{ex-oil-msp-3}\\
& & (e+r_{T-1})^{\top} x_{T-1} = q_T p_{T-1},\ q_T(\cdot)\in\mathbb{R}_+. \label{ex-oil-msp-4}
\end{eqnarray}
\end{subequations}
In the setup, we assume that short sales of
the security assets and crude oil are forbidden, i.e.,
$x_t\in \mathbb{R}_+^n$ and $q_t\in\mathbb{R}_+$.
Assume that the investor is ambiguous about the true utility function at each stage, we then propose
a preference robust counterpart of the multistage 
investment-consumption problem to mitigate the risk arising from the ambiguity:
\begin{subequations}\label{ex-oil-pro}
\begin{eqnarray}
   & \max\limits_{x_{[1,T\!-\! 1]},q_{[1,T]}} & \inf_{\vec{\mathfrak{u}}\in\mathcal{U}}\mathbb{E}\left[\mathfrak{u}_1(g_1(q_1)d_1, h_0)+\mathfrak{u}_2(g_2(q_2) d_2,h_{[1]})+\! \cdots\! +\mathfrak{u}_T(g_T(q_T) d_T,h_{[T-1]})\right]  \label{ex-oil-pro-1} \\
& {\rm s.t.} &  \eqref{ex-oil-msp-2}-\eqref{ex-oil-msp-4}.  \label{ex-oil-pro-2}
\end{eqnarray}
\end{subequations}
{ We  carry out comparative numerical analysis
on the model by considering the utility functions being state-dependent (with the ambiguity set
being constructed via pairwise comparison and Kantorovich ball)  and state-independent, respectively.
}


\subsection{Setup of tests}
\label{sec:num-setup}

To ease the exposition,
we consider a simple case
where $g_t(x)=x$ and $d_t=p_t$, for $t=1,\ldots,T$. This is based on the understanding that
the productions of oil products are proportional to the purchased amount of the crude oil
and the value of oil products is proportional to the crude oil price.
We assume that the
true utility of oil products depends on the crude oil price in two regimes. In the usual regime when the crude oil price is less than or equal to $\$60$ per barrel, the investor has a linear utility function $\mathfrak{u}_{\inmat{lin}}(x)=x$ defined over $[0,1]$.
In the other regime when the crude oil price is greater than $\$60$ per barrel,
the investor has
a concave utility $\mathfrak{u}_{\inmat{exp}}(x)=(1-\exp(-3x))/(1-\exp(-3))$ defined over $[0,1]$.


We set the risky assets pool with 9 exchange-traded-funds (ETF)
in the US equity market corresponding to different industry sectors {
including Utilities (XLU), Energy (XLE), Finance 
(XLF),
Technology (XLK),
Health Care (XLV),
Consumer Staples (XLP),
Consumer Discretionary (XLY),
Industry 
(XLI), and
Materials (XLB) sectors.}
We collect  weekly data of crude oil price (OK Crude Oil Future Contract) and the ETF prices over the period 2007/1/1 - 2021/3/29. ETF data are downloaded from Yahoo Finance\footnote{\url{https://finance.yahoo.com}} and oil prices are downloaded from Energy Information Administration\footnote{\url{https://www.eia.gov/dnav/pet/hist/LeafHandler.ashx?n=PET&s=RCLC1&f=W}}.
Before generating the scenario tree, the price data are transformed into log-return rate to pass the stationary test of the data 
series. We adopt an {ARMA(0,1)-GARCH(1,1) model with Gaussian residuals to forecast the future return rate of oil and ETF prices and built a scenario tree with a symmetrical branching structure. The optimal orders for the ARMA and GARCH models were determined through maximum likelihood estimation.}
One can refer to \cite{YCC20} for detailed algorithms of the scenario tree generation. {To reduce the computational complexity of DP-type algorithms, we consider the stage independent case. }

The models to be tested in comparative 
analysis include:
 MSP-True:
 problem \eqref{ex-oil-msp} with the true utility functions,
 SP-PLN-{SD}:
 problem \eqref{ex-oil-msp} with piecewise linear nominal utility functions,
 MS-PRO-{SD}-Kan: 
 problem \eqref{ex-oil-pro} with
  the {state-dependent} ambiguity set $\mathcal{U}^K_t(\xi_{[t-1]}) $ constructed via the Kantorovich ball
  centered at a piecewise linear nominal utility function at each node.
 MS-PRO-{SD}-PC:
 problem \eqref{ex-oil-pro} based on the {state-dependent}
  pairwise comparison ambiguity set $\mathcal{U}^P_t(\xi_{[t-1]}) $ with
  randomly generated questionnaires and answers at each node.
  {
   MS-PRO-{SID}-Kan: 
problem \eqref{ex-oil-pro} with
  the {state-independent} ambiguity set $\mathcal{U}^K_t$ constructed via the Kantorovich ball
  centered at a piecewise linear nominal utility function at each stage.
Details of preference elicitation and
construction of the ambiguity sets are
deferred to \ref{EC:Elicit-Ambiguity-consct}.
  }
All optimization problems in the deterministic reformulations are solved
by Gurobi solver through CVX package in Matlab R2016a on a PC with 3.4GHz CPU and 16GB RAM.

{
\subsection{Numerical results: validation of three solution approaches}

In the first set of tests, we solve MS-PRO-SD-Kan with the scenario tree method, the NBD 
method, and the SDDP method for small instance problems with 2-6 stages.
In order to compare the three solution methods in a same problem,
we focus on
a 
scenario tree with stagewise independence (corresponding to a 
 recombining tree \cite{FuR21,SDR14}) 
and state-dependent utilities.
At each stage, we generate 5 samples of the oil price and return rates of the 9 ETF assets.
For the scenario tree method and the NBD method, we generate a 
tree with $5^{T}$ scenarios with the stagewise independent samples, where $T=2,\cdots,6$.
Table \ref{tab:tree_approach} displays the
optimal values and
CPU times of the three approaches.
From the table, we can see that
the scenario tree method and the NBD method generate the same optimal values
when
$T=2,\cdots,5$.
In the case that
$T=6$, the
lower and the upper bounds
generated by the
NBD method
do not match in the last two digits within
the specified algorithmic stopping criteria.
The SDDP method generates slightly
wider gaps between the lower bounds and upper bounds for $T=2,\cdots,6$ where the lower bounds are heuristic.
In terms of CPU time, the scenario tree method
is very efficient when $T\leq 4$ but its CPU time increases
rapidly when $T=5,6$ because the number of scenarios increases exponentially.
In contrast, the SDDP method displays a kind of
``linear''
increase of CPU time w.r.t. 
$T$.
The NBD 
method displays the longest
CPU time in all five cases ($T=2,\cdots,6$).

\begin{table}[h]
{
    \caption{{The Optimal values
    and CPU times of the scenario tree method, the NBD method and the SDDP method for MS-PRO-SD-Kan with 2-6 stages}}
    \label{tab:tree_approach}
    \centering
    \begin{tabular}{c|c|ccccccccc}
    \hline
   \multicolumn{2}{c|}{
   $T$
   }    &  2&3&4&5&6 \\
      \hline
     \multirow{2}*{Scenario tree method}  &  Opt. Val. & 1.1977 & 1.2281 & 1.3607 & 1.6832 & 1.9859 
     \\
     & CPU time (s) & 2.6905 & 13.2355&136.36&716.96&3738.8 
     \\\hline
 {NBD method} & Opt. Val. & 1.1977 & 1.2281 & 1.3607 & 1.6832 & [1.9808,1.9888] 
 \\
(tol=0.0001) & CPU time (s) & 46.128 & 224.34 & 780.1&  2411.4 &5519.4 
\\\hline
\multirow{2}*
{SDDP method}  &  Upper bound & 1.1992 & 1.2283 & 1.3618 & 1.6846 & 1.9868 
\\
& Lower bound & 1.1970 & 1.2287 & 1.3620 & 1.6838 & 1.9858 
\\
    (tol=$z_{1-0.99/2} \frac{\sigma_{\underline{v}}}{|\mathcal{K}|}$, $|\mathcal{K}|=20$) & CPU time (s) &  28.15 & 59.90 & 99.21 & 367.94 & 685.52 & 
    \\
     \hline
    \end{tabular}
}
\end{table}

In the second set of tests, we solve the same problem but for the case when $T$ ranges
from $10$ to $50$. Since the scenario tree method and the NBD method require unaffordable storage space and
unacceptably long CPU time, we concentrate on SDDP only.
Table \ref{tab:sddp} lists lower and upper bounds of the optimal values, the number of
iterations of forward-backward processes,
and CPU time
for $T=10, 30, 50$ with different numbers of scenarios ($|\mathscr{K}|=5, 10, 20$).
We can see that the lower bounds are close to upper bounds in all of the cases which means
the algorithm converges within the prescribed precision. The change of CPU time confirms our earlier observation that it increases at a ``linear'' rate w.r.t. the increase of $T$.



\begin{table}[h]
\centering
{
    \caption{{Upper bounds, lower bounds and CPU time of the SDDP method for MS-PRO-SD-Kan with 10-50 stages (tol=$z_{1-0.99/2} \frac{\sigma_{\underline{v}}}{ |\mathcal{K}|}$)}}\label{tab:sddp}
\begin{tabular}{ccccccc}
\hline
Stages & $|\mathcal{K}|$ & Iterations & Upper bound & Lower bound & CPU time (s) \\\hline
       & 5         & 13         & 2.4473      & 2.4513      & 333.44   \\
10     & 10        & 11         & 2.5307      & 2.5297      & 551.16    \\
       & 20        & 13         & 2.3907      & 2.3921      & $1.44\times 10^3$ \\\hline
       & 5         & 44         & 2.9791      & 2.9718      & $4.31\times 10^3$  \\
30     & 10        & 31         & 2.9207      & 2.9186      & $6.57\times 10^3$  \\
       & 20        & 13         & 2.915       & 2.9161      & $5.01\times 10^3$  \\\hline
       & 5         & 57         & 3.4359      & 3.4462      & $9.95\times 10^3$  \\
50     & 10        & 41         & 3.4213      & 3.4194      & $1.61\times 10^4$  \\
       & 20        & 23         & 3.3919      & 3.394       & $1.93\times 10^4$  \\
\hline
\end{tabular}
}
\end{table}

}



\subsection{Comparative analysis of the models}
\label{sec:num-model-val}
{
To examine the effects of different models,}
we have conducted
comparative
numerical analysis
from the following {four} perspectives:
   {
   (a) Compare the optimal values of  MSP-True, MS-PRO-SD-Kan and MS-PRO-SID-Kan}
   with respect to different numbers of time periods, $T=2,3,\ldots,6$.
   We set the radius of the Kantorovich ball to $R=0.001$ and the number of breakpoints to $N=40$ under all scenarios, see  { Figure \ref{fig-1}.}
  {
  (b) Compare} the optimal values of MSP-PLN and MS-PRO-SD-Kan with different numbers of breakpoints $N$, where the optimal value of MSP-True is chosen as the benchmark. Here, we set $T=4$, and $R=0.001$ under all scenarios,
  {see Figure \ref{fig-2}.}
{
(c) Compare the optimal values
of  MS-PRO-SD-Kan and MS-PRO-SID-Kan} with different radii of Kantorovich ball when $T=4$, and $N=40$, { see Figure \ref{fig-3}}.
{
(d) Compare} the optimal values of MS-PRO-PC with different number of questionnaires with $T=4$, see Figure \ref{fig-4}.


\begin{figure}[htbp]
	\centering
	\begin{minipage}[t]{0.44\textwidth}
		\centering
\includegraphics[width=7.5cm]{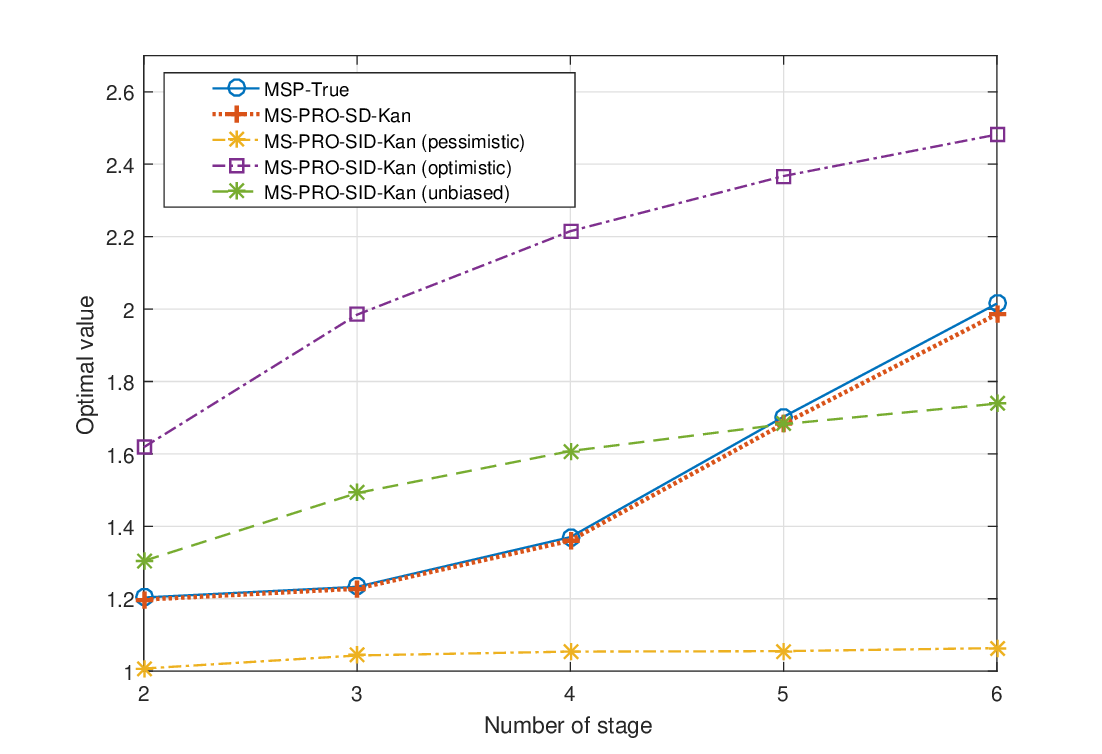}
		\caption{{
  Comparison of the optimal values
  of MSP-True, MS-PRO-SD-Kan and  MS-PRO-SID-Kan models
  with increasing number of stages
		($R=0.001$, $N=40$).} \label{fig-1}}
	\end{minipage}
	\quad
		\begin{minipage}[t]{0.44\textwidth}
		\centering
    \includegraphics[width=7.5cm]{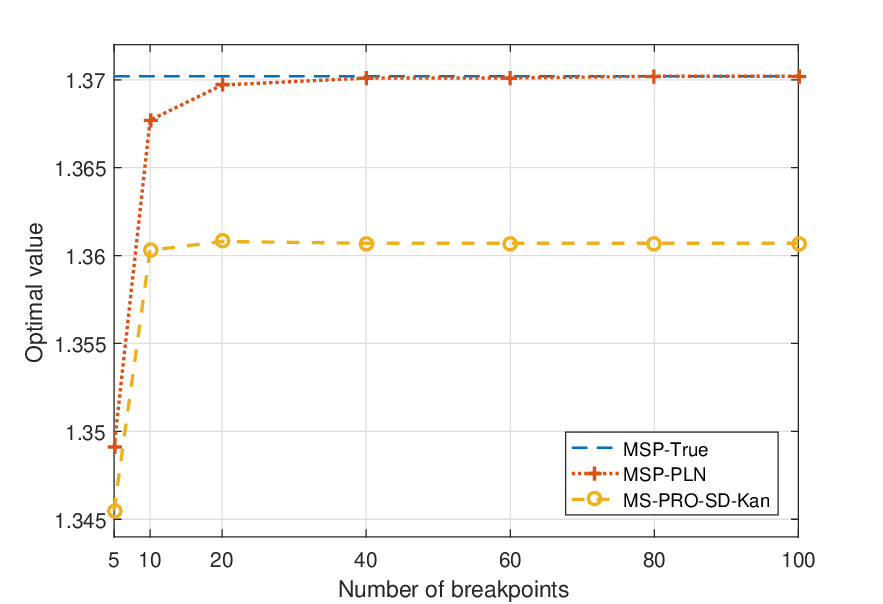}
\caption{Comparison of the optimal values of MSP-True, MSP-PLN and MS-PRO-SD-Kan with increasing number of breakpoints
($T=4$, $R=0.001$).\label{fig-2}}
	\end{minipage}
\end{figure}

\begin{figure}[htbp]
	\centering
	\begin{minipage}[t]{0.44\textwidth}
		\centering
		\includegraphics[width=7.5cm]{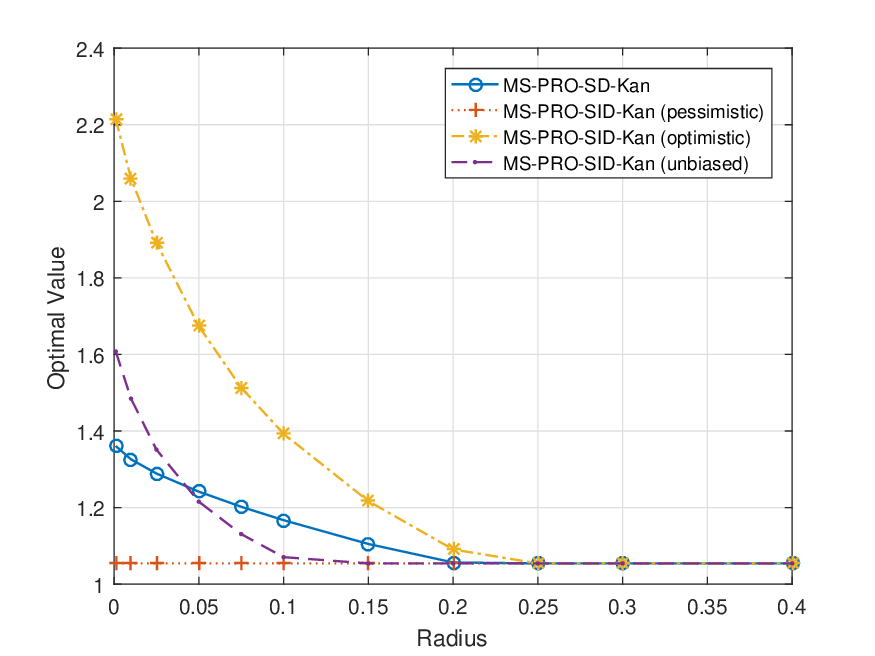}
		\caption{{
  Comparison of the optimal values
  of MS-PRO-SD-Kan and MS-PRO-SID-Kan with increasing radius of Kantorovich Ball
		($T=4$, $N=40$).} \label{fig-3}}
	\end{minipage}
	\quad	\begin{minipage}[t]{0.44\textwidth}
		\centering
   \includegraphics[width=7.5cm]{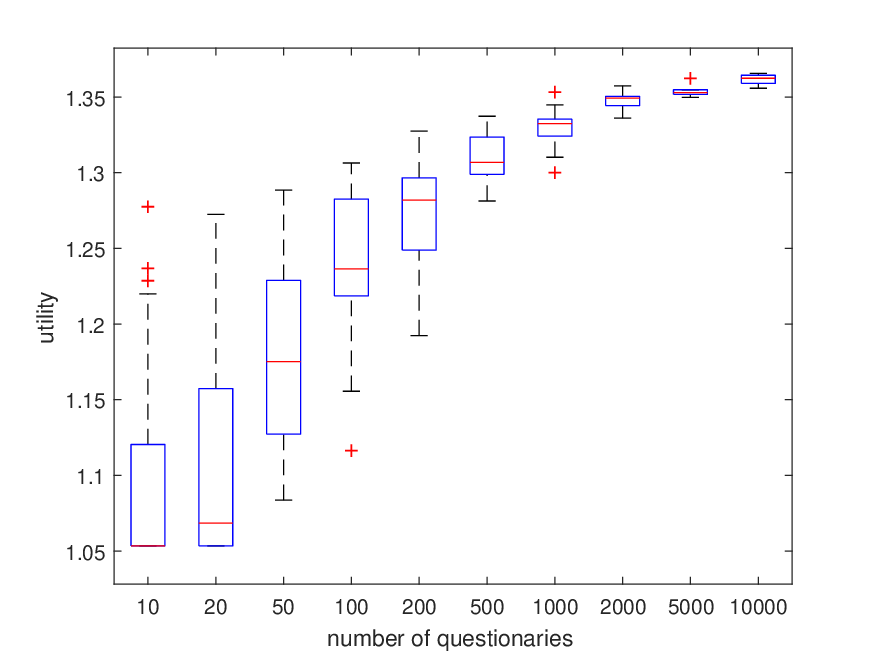}
		\caption{Boxplot of the optimal values of MS-PRO-SD-PC with different numbers of questionaries (for each number, we randomly generate 100 groups of questionaries and plot the mean, maximum, minimum and $25-75\%$ quantiles
		($T=4$, $N=40$).
  \label{fig-4}}
	\end{minipage}
\end{figure}


{
    From Figure \ref{fig-1},
    we can see that with
    more stages ($T$) to be included in the models,
    the investor has
    greater flexibility in
setting future consumption
and
consequently
obtaining higher optimal total expected utility values.
This phenomenon
    is observed
    for both MSP and MS-PRO models.
    Moreover, the optimal value
    of the MS-PRO model is smaller than that of MSP, which can be
    interpreted as the price of robustness.
    The gap narrows down
    as $T$ increases.
    The optimal values of MS-PRO-SID-Kan with optimistic estimations are the highest while that with pessimistic estimations are the lowest 
    and that with unbiased estimations are in the middle. This relationship
    reflects
    the nature of the estimations.
    Figure \ref{fig-2}
    depicts the variation trends of 
    the optimal values as the number of the breakpoints ($N$) of
    piecewise linear approximation
        increases.
        We can see that
        the optimal value of MSP-PLN 
        approaches
        that of MSP-True
        when $N$ reaches $20$;
        and the optimal value of MS-PRO-SD-Kan
    moves closer to that of MSP-True despite a gap exists due to $R>0$.
    This is consistent with our theoretical results.

    Figure \ref{fig-3} presents
    comparative analysis between
    state-dependent
    utility model and state-independent
    utility model under the framework of
 MS-PRO-Kan.
        We can
    observe that
    {with the decrease of the
    radius, }
    the optimal value of
    MS-PRO-SD-Kan
    approaches
    that of {MSP-PLN}  
    with the same piecewise linear nominal utility function.
    When the radius is
    greater than 0.25,
        MS-PRO-SD-Kan and MS-PRO-SID-Kan 
    generate almost the same solution.
    This is because
    the constraint
corresponding to the Kantorovich ball
becomes inactive 
(the worst-case utility function becomes linear
and the concavity constraint overrides the ball constraint, see Figure \ref{ECfig:worst-case-utility})
and
subsequently
only the bounds on Lipschitz modulus 
and the
    convexity constraints
    are effective.
    }
    Figure \ref{fig-4}
    shows that with the increase of the number of questionnaires, the optimal value of MS-PRO-SD-PC converges to that of MSP-True
    since the randomness of questionnaires recedes.

{
\subsection{Out-of-sample performance of different models with randomly generated true utilities}

We now turn to report our numerical test results on
the
out-of-sample performance of the proposed
MS-PRO models. Specifically, we
solve the MS-PRO models including
MS-PRO-SD-Kan 
and MS-PRO-SID-Kan, 
obtain an optimal
solution, and implement it in the out-of-sample tests
with the true utility function.
We begin by randomly generating
a set of non-decreasing, piecewise linear and concave utility functions which are within
an $\epsilon$-Kantorovich ball centered at
a state-dependent reference utility function,
see Figures \ref{fig-random-u-001}-
\ref{fig-random-u-02}.

The first set of tests is carried out as follows.
For the  MS-PRO-SID-Kan model, we use each of the four
estimation approaches outlined
in Section 6.3
to figure out
a  state-independent nominal utility function,
construct  respective Kantorovich balls with
three different radii ($R=0.01$, $0.1$ or $0.2$),
and solve the
resulting MS-PRO-SID-Kan models.
For the MS-PRO-SD-Kan model, we use the unbiased estimation method
to find a piecewise linear
nominal utility function at each state
and then construct a Kantorovich ball
 (with different radii $R=0.01,0.1,0.2$).
Here we assume that the number of states
is known 
but the
correspondence
between the elicited scores and the states is unknown.
For each of the optimal solutions,
 we calculate the returns in each scenario and then
evaluate the out-of-sample expected utility value 
with
one of the randomly generated
 utility  functions (we call one simulation).
 We 
 repeat the simulation 100 
 times and
 calculate the average of the expected utility values.
The rationale behind the simulations is that the true utility function is unknown and we presume that
each
of the 100 utility functions could be the true.
Table 
\ref{tab:out_of_sample_01}
displays
the average of the mean values, the
minimum value and the maximum value
of the 100 out-of-sample tests.

From Table~\ref{tab:out_of_sample_01},
we can see that
MSP-True performs best in terms of the mean value and the maximum value.
MS-PRO-SD-Kan ($R=0.01$)
gives the best of
the worst-case expected utility value, which highlights
the value of adopting the robust model.
The  MS-PRO-SID-Kan delivers the worst performance
in all aspects.
This is primarily because the true utility function
is state-dependent.
Figure \ref{compare_utility}
depicts the results in box-plots,
we can see that when $\epsilon$ increases, the difference of the performances
in terms of minimum values becomes smaller.
This is because the concavity constraint
overrides the Kantorovich ball
constraint (the worst-case utility function becomes linear).
Moreover, when $R$ matches $\epsilon$ ($0.1$),
the MS-PRO-SD-Kan  performs best in terms of the minimum value.
In this case,
the ambiguity set
in MS-PRO-SD-Kan
covers the set of randomly generated utility functions (for out-of-sample tests).

}


\begin{table}[]
\small
    \centering
    {
\caption{{
        Comparisons of out-of-sample performances
         of MS-PRO-SD-Kan and MS-PRO-SID-Kan
          with $T=4$, $N=40$ and $\epsilon=0.1$.
        }
        }
    \label{tab:out_of_sample_01}
    \begin{tabular}{c|c|ccc|cccc}
    \hline
  \multirow{2}*{\diagbox{Statistics}{Model}}  & \multirow{2}*{MSP-True} & \multicolumn{3}{c|}{MS-PRO-SD-Kan (Unbiased)
  }  & \multicolumn{4}{c}{MS-PRO-SID-Kan ($R=0.1$)} \\
& & $R=0.01$ & $R=0.1$ & $R=0.2$ & Pessimistic & Optimistic & Unbiased & Best-fit \\
\hline
 Mean  & \textbf{1.3660} & 1.3654 & 1.3587 & 1.1542 & 1.0860 & 1.1467 & 1.1418 & 1.1480\\
Min & 1.1660 & 1.1663 & \textbf{1.1670} & 1.0962 & 1.0791 & 1.0899 & 1.0996 & 1.1003 \\
Max & \textbf{1.5660} & 1.5645 & 1.5501 & 1.2113 & 1.0945 & 1.2028 & 1.1834 & 1.1946\\
\hline
    \end{tabular}
    }
\end{table}


\begin{figure}[htbp]
	\centering
\begin{minipage}[t]{0.32\textwidth}
		\centering
   \includegraphics[width=5.5cm]{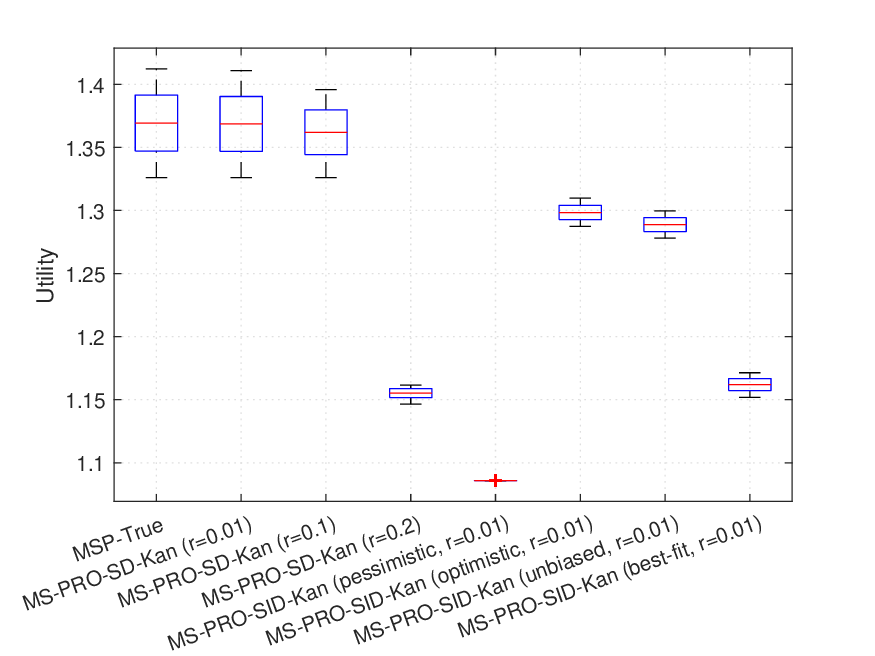}\\
  { $\epsilon=0.01$}
	\end{minipage}
 \begin{minipage}[t]{0.32\textwidth}
		\centering
   \includegraphics[width=5.5cm]{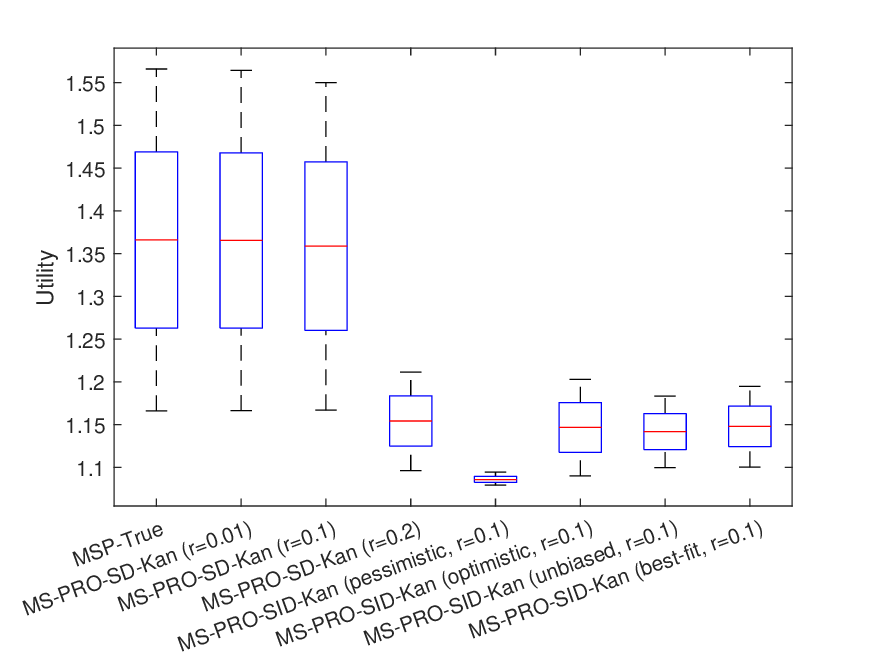}\\
	 	{ $\epsilon=0.1$}
	\end{minipage}
 \begin{minipage}[t]{0.32\textwidth}
		\centering
   \includegraphics[width=5.5cm]{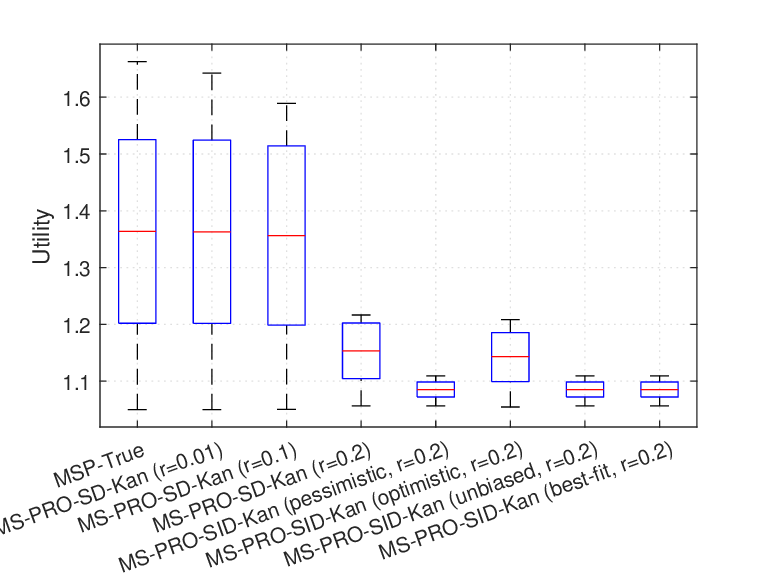}\\
   { $\epsilon=0.2$}
	\end{minipage}
 	\caption{
  Boxplots of out-of-sample utility values 
         of MS-PRO-SD-Kan and MS-PRO-SID-Kan under three sets of %
  randomly generated utility functions
  \label{compare_utility} }
\end{figure}

\section{Concluding remarks}

{
In this paper, we present a full investigation
of the PRO models for expected utility based
multistage  decision making.
We begin with holistic maximin models
(\ref{eq-mpro-tc}) for state-dependent utility case and
(\ref{eq-pro-intc}) for state-independent utility case,
demonstrate time consistency and time inconsistency for
them respectively, and derive the
dynamic recursive formulation (\ref{eq:thm-recursive-formula}) for the former.
We then use scenario-tree methods to solve both
(\ref{eq-mpro-tc}) and (\ref{eq-pro-intc})
with a given scenario tree structure of the underlying random process,
and the SDDP and the NBD 
methods to solve
(\ref{eq-mpro-tc}) via (\ref{eq:thm-recursive-formula}).
Finally, we
carry out comparative numerical tests
on state-dependent
model (\ref{eq-mpro-tc})
vs state-independent model
(\ref{eq-pro-intc}), and
scenario tree method
vs dynamic programming method for solving (\ref{eq-mpro-tc}).
To derive dynamic reformulation of (\ref{eq-mpro-tc}), we
derive a new  version of
the principle of interchangeability
in Banach space (Lemmas~\ref{lemma-interchange-expectation} and \ref{lem-int-change}).

A clear benefit of beginning the robust model
with  (\ref{eq-mpro-tc}) rather than (\ref{eq:thm-recursive-formula})
as some of
the distributionally robust MSP models do in the literature (see e.g.~\cite{PaB22,YuS22}) is that it allows us to apply both the
scenario tree
algorithm and
DP
algorithms for solving
the state-dependent MS-PRO model.
Moreover, since
(\ref{eq-pro-intc}) does not have a dynamic
reformulation, the presence of
(\ref{eq-mpro-tc}) facilitates us to
compare the performances of the two models
by solving them with the same scenario tree methods.
Establishing
a link between (\ref{eq-mpro-tc}) and (\ref{eq:thm-recursive-formula}) is
a key step given that our PRO model is non-parametric and
establishing an equivalence relation
requires
a new interchangebility result in Banach space.
While
our focus in the paper is
on the utility-based PRO models,
our approach on both (\ref{eq-mpro-tc}) and (\ref{eq:thm-recursive-formula}) may have
some ramifications on other nonparametric
 multistage maximin (res.~minimax)  optimization problems (\cite{XiG21,SaX20}).
To the best of our knowledge,
 the 
 existing research
only allows one to
establish
an equivalence relation
(analogous to (\ref{eq:thm-recursive-formula}) and (\ref{eq-mpro-tc}))
 for the multistage
 parametric robust optimization problems
where  both the outer maximization (res.~minimization) and the inner minimization (res.~maximization) problems
are essentially finite-dimensional or
nonstochastic, see \cite{Berts01,DeI15} and the references therein.
This is perhaps because
the existing interchangeability results
in the literature are
established in the
finite-dimensional spaces.
We 
hope that our new interchangeability result (Lemmas~\ref{lemma-interchange-expectation} and \ref{lem-int-change})
will help to make a breakthrough 
in these models.





Constructing a nominal utility function for the ambiguity set of Kantorovich ball is another important component of this work.
The 
 approaches outlined in \ref{EC:Elicit-Ambiguity-consct}
for state-dependent
and state-independent
utility
cases
provide a new avenue
for estimating an approximate utility function based on incomplete information of scoring and may provide
a new direction for general preference elicitation.
It remains to be an open question 
how the model will perform if the  number of states
is incorrectly preset, how to design consumption trajectories for
more
effective preference elicitation (a main departure from one stage),
and how to deal with errors occurring in scoring.
The
setting of the radius of the Kantorovich ball
can also be
improved by  comprehensively
considering the estimation error on the number of states, the estimation error on piecewise linear approximation and the
errors in scoring, with
appropriate statistical inference and guarantee.

Another
aspect of
our model
which could
be potentially strengthened
is
that instead of
separating
the preference elicitation/scoring
and the optimization process,
we may consider 
the dynamic interaction between the elicitation process and the optimization process on an online footing.
Online optimization, reinforcement learning or meta-learning approaches
may be further
incorporated to improve the intelligence of
our MS-PRO model.
We leave all these issues for future research.
}

\section*{Acknowledgements}
This work was funded by the National Key R\&D Program of China (No. 2022YFA1004000, 2022YFA1004001), National Natural Science Foundation of China (No. 11991023 and 11901449),
RGC grant (14204821) and CUHK startup grant.
\ECSwitch

\ECHead{Electronic Companion for ``Multistage Utility Preference Robust Optimization''}

\section{An example of multistage portfolio selection problem with utility maximization}
\label{ec-example-portfolio}

\begin{example}\label{ex-utility-max}
Consider a financial market with $n$ risky assets.
Suppose that an investor joins the market at time $0$ with a positive initial wealth $w_0$ and plans to
invest her/his wealth in the market for $T$ periods.
At each period, the investor gains a reward which could be her/his end-of-period wealth or
the increase of her/his wealth over this period, i.e., $h_t(r_t,x_t)=(e+r_{t})^{\top} x_{t}$ or $h_t(r_t,x_t)=r_{t}^{\top} x_{t}$,
where $x_t$ is the asset allocation vector, $r_t$ is the excess return rate vector of $n$ risky assets over period $t$,
$e=[1,\cdots,1]^\top$ is the vector with
all components being one.
The investor presets a utility $u_t(\cdot)$ which measures her/his preferences on the reward over that period.
Then the investor would like to maximize
the overall expected utility over all of the $T$
periods
by adjusting her/his portfolios at the beginning of each period.
If the investor's objective is to maximize
the overall expected utility of the wealth, the decision making problem can be reformulated as a
multistage expected utility maximization problem:
\begin{eqnarray*}
& \max\limits_{\x_{[
T]}} & \mathbb{E}\left[u_1((e+r_{1})^{\top} x_{1})+u_2((e+r_{2})^{\top} x_{2})+\cdots+u_T((e+r_{T})^{\top} x_{T})\right]  \\
& {\rm s.t.} & x_1\in \{x\in \mathbb{R}^n_+ \mid e^\top x=w_0 \}, x_t(r_{[t-1]})\in \{x\in \mathbb{R}^n_+\mid e^{\top} x_{t}=(e+r_{t-1})^{\top} x_{t-1}\}, t=2,\ldots,T,
\end{eqnarray*}
where $x_t$ is the asset allocation vector of the current wealth invested in the $n$ risky assets at the beginning of period $t$, $r_{t}^{\top} x_{t}$ is the
wealth at the end of period $t$.
$e^{\top} x_{t}=(e+r_{t-1})^{\top} x_{t-1}$ is the wealth balance equation, which together with the no-shorting constraint characterizes the feasible set $\mathcal{X}_t$ of portfolio $x_{t}$ at period $t$.
If the investor's utility
is valued over the state-wise return rates,
the objective could be set as
$$
\mathbb{E}\left[u_1\left(\frac{{r}_1^{\top} x_{1}}{e^\top x_1}\right)+u_2\left(\frac{{r}_2^{\top} x_{2}}{e^\top x_2}\right)+\cdots+u_T\left(\frac{{r}_T^{\top} x_{T}}{e^\top x_T}\right)\right].
$$
In this case, by normalizing $\tilde{x}_t=x_t/(e^\top x_t)$, 
we have an equivalent utility maximization problem:
\begin{equation*}
\begin{array}{cl}
 \max\limits_{\tilde{x}_{1},\{\tilde{x}_t(\cdot)\}} & 
 \mathbb{E}\left[u_1( {r}_1^\top \tilde{x}_{1})+
 u_2( {r}_2^\top \tilde{x}_{2}) +\cdots + u_T( {r}_T^\top \tilde{x}_{T}) \right] \\
 {\rm s.t. }& e^\top \tilde{x}_1=1,\ \tilde{x}_1\in [0,1]^n,
 \ e^\top \tilde{x}_{t}(r_{[t-1]})
 =1,\ \tilde{x}_t(\cdot)\in \mathcal{L}^0([0,1]^n),\ t=2,\ldots,T.
\end{array}
\end{equation*}
\end{example}

 \section{Proofs}

{
 \subsection{Proof of Lemma~\ref{lemma-interchange-expectation}.}
\label{EC-interchangibility}

For any $\mathfrak{z} \in \mathfrak{M}_Z$, we have that $\mathfrak{z}(\omega) \in Z(\omega)
\subseteq \mathbb{Z}$ and hence
$\inf _{z \in {Z}(\omega)
} f(z, \omega)\leq   f(\mathfrak{z}(\omega), \omega)$
a.s..
Since $Z$ is measurable and $f$ is continuous in $z$, it follows by
\cite[Theorem 8.2.11]{AuF09-EC} that
$\inf _{z \in {Z}(\omega)
} f(z, \omega)$ is measurable.
By taking mathematical expectation on both sides of the inequality, we obtain
\bgeqn
\mathbb{E}\left[\inf _{z \in {Z}(\omega)
} f(z, \omega)\right]\leq   \mathbb{E}\left[ f(\mathfrak{z}(\omega), \omega)\right] = \mathbb{E}\left[F_{\mathfrak{z}}\right].
\edeqn
Moreover, by taking infimum w.r.t. $\mathfrak{z}$ over $\mathfrak{M}_Z$
on both sides of the inequality, we have
\bgeqn \label{eq-proof-le1}
\mathbb{E}\left[\inf _{z \in {Z}(\omega)
} f(z, \omega)\right]\leq  \inf_{\mathfrak{z} \in \mathfrak{M}_Z} \mathbb{E}\left[F_{\mathfrak{z}}\right].
\edeqn
Next, we show
the
inequality holds in the opposite direction.
We first
consider the case when $\inf _{z \in {Z}(\omega)} f(z, \omega)$ is finite valued a.s..
For $k=1,2,\cdots$, we consider the level-set mapping $S_k:\Omega \rightrightarrows \mathbb{Z}$
where
$$
S_k(\omega) = \left\{ z\in \mathbb{Z}:
f(z, \omega) \leq \inf_{z \in {Z}(\omega)} f(z, \omega) + \frac{1}{k} \right\}.
$$
Since $f$ is a Carath\'edory function,
we know
from \cite[Lemma 8.2.6]{AuF09-EC} that
$f(z, \omega)$ is 
$\B(\mathbb{Z})\bigotimes\F$-measurable
and for every $\omega\in\Omega$,
the function $z \rightarrow f(z,\omega)$ is continuous.
Thus $S_k(\omega)$ is
a closed set for
every given $\omega$.
By \cite[Theorem 8.1.4]{AuF09-EC},
the measurability of $f$ ensures the measurability of
$S_k(\omega)$
w.r.t.~$\F$.
Let
$$
Z_k(\omega) 
:= S_k(\omega)\bigcap Z(\omega), \forall \omega\in \Omega.
$$
Since both $S_k$ and  $Z$ are $\F$-measurable,
by \cite[Theorem 8.2.4]{AuF09-EC}, $Z_k$ is also $\F$-measurable.
Moreover, since $\inf_{z \in {Z}(\omega)} f(z, \omega)$
is finite-valued
a.s.,
then
$Z_k(\omega)$ is non-empty
and
$$
f(z,\omega) \leq
\inf_{z \in {Z}(\omega)} f(z, \omega) + \frac{1}{k}, \; \forall z\in Z_k(\omega), \; \text{a.s.}.
$$
Together with the closedness and $\F$-measurablity of $Z_k$,
we know
by virtue of Theorem 8.1.3 in \cite{AuF09-EC}
that there exists a $\F$-measurable selection $\mathfrak{z}_k$ of $Z_k$ such that
\begin{equation}\label{eq-proof-fz}
    f(\mathfrak{z}_k(\omega), \omega) \leq \inf_{z \in {Z}(\omega)} f(z, \omega) + \frac{1}{k},\ \text{a.e.}\ \omega\in \Omega.
    \end{equation}
By
taking expectation
on both sides of inequality \eqref{eq-proof-fz}, we have
$$
\mathbb{E}\left[F_{\mathfrak{z}_k}\right]=\mathbb{E}[f(\mathfrak{z}_k(\omega), \omega)] \leq \mathbb{E}\left[\inf_{z \in {Z}(\omega)} f(z, \omega)\right] + \frac{1}{k}.
$$
Since $\mathfrak{z}_k$ is a $\F$-measurable selection from $Z_k=S_k\bigcap Z$,
then $\mathfrak{z}_k\in \mathfrak{M}_Z$.
Letting $k\rightarrow +\infty$ gives us that
\begin{equation}\label{eq-proof-fz-1}
\inf_{\mathfrak{z}\in \mathfrak{M}_Z}\mathbb{E}\left[F_{\mathfrak{z}}\right] \leq \mathbb{E}\left[\inf_{z \in {Z}(\omega)} f(z, \omega)\right].
\end{equation}
Combining with inequality (\ref{eq-proof-le1}), we arrive at
(\ref{eq:interchange}) as desired.

Next, we move on to consider
two extreme cases:
(a) the event $\{\omega \mid \inf _{z \in {Z}(\omega)
} f(z, \omega)=-\infty \}$
has a positive probability $p_{-\infty}$
and (b) the event
 $\{\omega \mid \inf _{z \in {Z}(\omega)
} f(z, \omega)=+\infty \}$
has a positive probability $p_{+\infty}$.
We first consider case (a). In this case,
$\mathbb{E}\left[\left(-\inf_{z \in {Z}(\omega)} f(z, \omega)\right)_+\right]= +\infty$, which,
by
the
assumption of the lemma, implies
$\mathbb{E}\left[ \left(\inf_{z \in {Z}(\omega) } f(z, \omega)\right)_+\right]< +\infty$.
This gives rise to
$
\mathbb{E}\left[\inf _{z \in {Z}(\omega)
} f(z, \omega)\right] =-\infty.
$
We can use the same approach
as that in the finite-valued case to
show that the right hand side of \eqref{eq-proof-le1}
is also equal to $-\infty$.
Specifically,
for any $k\in \{1,2,3\ldots\}$,
we consider the level set mapping
\begin{equation*}
S_k(\omega) := \left\{ z\in \mathbb{Z}\ \Big| \
\begin{array}{ll}
f(z, \omega) \leq \inf_{z \in {Z}(\omega)} f(z, \omega) + \frac{1}{k}, & \text{if } \inf_{z \in {Z}(\omega)} f(z, \omega) >-\infty \\
f(z, \omega) \leq -k, & \text{if } \inf_{z \in {Z}(\omega)} f(z, \omega) =-\infty
\end{array}\right\}.
\end{equation*}
We can show that
$S_k(\omega)\bigcap Z(\omega)$ is
$\F$-measurable
and
there exists a $\F$-measurable
selection of $\mathfrak{z}_k$ from $Z_k :=S_k\bigcap Z$
such that
\begin{eqnarray*}
\inf_{\mathfrak{z} \in \mathfrak{M}_Z} \mathbb{E}\left[F_{\mathfrak{z}}\right]
&\leq&
 \mathbb{E}\left[F_{\mathfrak{z}_k}\right] \\
&\leq &
  \int_{\inf_{z \in {Z}(\omega)} f(z, \omega)=-\infty} (-k) \mathbb{P}(d\omega)
 + \int_{\inf_{z \in {Z}(\omega)} f(z, \omega)>-\infty} \left(\inf_{z \in {Z}(\omega)} f(z, \omega) + \frac{1}{k}\right) \mathbb{P}(d\omega) \\
&
=& -p_{-\infty} k
+ \int_{\inf_{z \in {Z}(\omega)} f(z, \omega)>-\infty} \left( \left[\inf_{z \in {Z}(\omega)} f(z, \omega)\right]_+ - \left[-\inf_{z \in {Z}(\omega)} f(z, \omega)\right]_+ \right) \mathbb{P}(d\omega)\\
&&
+ (1-p_{-\infty}) \frac{1}{k} \\
&\leq & -p_{-\infty} k
+ \int_{\Omega} \left[\inf_{z \in {Z}(\omega)} f(z, \omega)\right]_+ \mathbb{P} (d\omega)
+ (1-p_{-\infty}) \frac{1}{k},
\end{eqnarray*}
where $p_{-\infty} = \mathbb{P}\left(\inf_{z \in {Z}(\omega)} f(z, \omega)=-\infty\right)>0$.
Since $\int_{\Omega} [\inf_{z \in {Z}(\omega)} f(z, \omega)]_+ \mathbb{P}( d\omega) <+\infty$,
by letting $k\rightarrow +\infty$, we 
arrive at
$
\inf_{\mathfrak{z} \in \mathfrak{M}_Z} \mathbb{E}\left[F_{\mathfrak{z}}\right]=-\infty
$
as desired.

Consider now case (b). In this case,
$\mathbb{E}\left[\left(\inf_{z \in {Z}(\omega)} f(z, \omega)\right)_+\right]= +\infty$, which implies by
the assumption of the lemma that
$\mathbb{E}\left[ \left[-\inf_{z \in {Z}(\omega) } f(z, \omega)\right]_+\right]<+ \infty$.
This gives rise to
$$
\mathbb{E}\left[\inf _{z \in {Z}(\omega)
} f(z, \omega)\right] =+\infty.
$$
By \eqref{eq-proof-le1}, we know that both sides of \eqref{eq:interchange} are equal to $+\infty$.
Note that
the cases  (a) and (b)  cannot occur
simultaneously
due to the assumption on the positive/negative part of the expectation. 
\hfill $\Box$

 }

 \subsection{Proof of Lemma \ref{lem-int-change}}
 \label{EC-Lemma2}
{

 The thrust of the proof is to
 fit (\ref{eq:lemma2-condi-expt-F-xi}) in the framework of Lemma 1 so that the principle of the interchangeability established in Lemma 1
 can be readily applied. To this effect,
we introduce
a new random function $\mathfrak{\hat{u}}:\R \times \Omega \rightarrow \R$ 
such that
$\mathfrak{\hat{u}}(x,\omega)=\mathfrak{{u}}(x,\xi(\omega))$, $\forall x\in \R$ and a.e. $\omega\in\Omega$,
where $\mathfrak{{u}}\in\mathfrak{M}_\mathcal{U}$.
{Define $\mathbb{Z}:=\mathcal{L}^p(\R \rightarrow\R)$ as a 
functional space.}
Let $\mathfrak{U}$  denote
 the 
 space of measurable functions $ \mathfrak{\hat{u}}: \Omega \rightarrow \mathbb{Z}$ with finite $p$-th order moments and define
$$
\hat{\mathfrak{M}}_{\mathcal{U}} :=\left\{ \mathfrak{\hat{u}}\in \mathfrak{U}\ \Big|\ \begin{array}{ll}
       \mathfrak{\hat{u}}(x,\omega)=\mathfrak{{u}}(x,\xi(\omega)),\ \forall x\in \R, \text{for a.e.}\; \omega, \\
      \mathfrak{u}(\cdot,\xi) \in \mathcal{U}(\xi), 
      \text{ for any } \xi
\end{array}   \right\}.$$
By letting $\hat{\mathcal{U}}:={\mathcal{U}}(\xi)$,
we have
$
\hat{\mathfrak{M}}_{\mathcal{U}} = \left\{ \mathfrak{\hat{u}}\in \mathfrak{U} \mid \mathfrak{\hat{u}}(x,\omega)\in \hat{\mathcal{U}}(\omega),\ \text{for a.e.}\ \omega\in\Omega
  \right\}.
  $
By changing the variable from $\mathfrak{u}$ to $\mathfrak{\hat{u}}$,
 we obtain 
 $$
 {\inf\limits_{\mathfrak{u}\in{{\mathfrak{M}}_{\mathcal{U}}}}\mathbb{E}\left[\mathfrak{u}(\eta,\xi)\right] = \inf\limits_{\mathfrak{u}\in{{\mathfrak{M}}_{\mathcal{U}}}} \int_{\Omega} \mathfrak{u}(\eta(\omega),\xi(\omega))d\mathbb{P}(\omega)=
 \inf\limits_{\mathfrak{{\hat{u}}}\in{\hat{\mathfrak{M}}_{\mathcal{U}}}} \int_{\Omega} \mathfrak{\hat{u}}(\eta(\omega),\omega)d\mathbb{P}(\omega)}
 = \inf\limits_{\mathfrak{{\hat{u}}}\in{\hat{\mathfrak{M}}_{\mathcal{U}}}} \mathbb{E}\left[\mathfrak{\hat{u}}(\eta(\omega),\omega)\right].
 $$
Moreover, by the tower property of the expectation operator, we have
\bgeq
\begin{array}{ll}
&
\mathbb{E}\left[\mathfrak{\hat{u}}(\eta(\omega),\omega)\right] =
\mathbb{E}\left[ \mathbb{E}_{\mid \F_\xi}\left[\mathfrak{\hat{u}}(\eta(\omega),\omega)
\right]   \right],
\end{array}
\edeq
where $\mathbb{E}_{\mid \F_\xi}$ denotes the conditional expectation with respect to $\F_\xi$,
and hence
\bgeqn
\inf\limits_{\mathfrak{u}\in{{\mathfrak{M}}_{\mathcal{U}}}}\mathbb{E}\left[\mathfrak{u}(\eta,\xi)\right]
=\inf\limits_{\mathfrak{{\hat{u}}}\in{\hat{\mathfrak{M}}_{\mathcal{U}}}}
\mathbb{E}\left[ \mathbb{E}_{\mid \F_\xi}\left[\mathfrak{\hat{u}}(\eta(\omega),\omega)
\right]\right].
\label{eq:Lema2-prf-3}
\edeqn
To apply Lemma 1, we define
\begin{eqnarray}
f(\mathfrak{\hat{u}}(\cdot,\omega),\omega) := \mathbb{E}_{\mid \F_\xi}\left[\mathfrak{\hat{u}}(\eta(\omega),\omega)
\right].
\label{eq-lemma2-p1}
\end{eqnarray}
Here, $f$: $\mathbb{Z}\times \Omega
\rightarrow \mathcal{L}^p(\Omega,\F_{\xi},\mathbb{P};\R)$ is a 
functional with
$f(z,\omega)
=
\mathbb{E}_{\mid \F_\xi}\left[z(\eta(\omega))
\right]
]$ for each $z\in \mathbb{Z}$.
By the definition, we can see that
for each fixed $\omega$, $f(z,\omega)$ is continuous
in $z$ since the conditional expectation is a linear operator. For each fixed $z$, $f(z,\cdot)$ is measurable.
By the continuity of $z(\cdot)\in
\mathbb{Z}$, and 
the measurability of $\mathbb{E}_{\mid \F_\xi}$ and $\eta$, we
know
by virtue of \cite[Corollary  8.2.3]{AuF09-EC}
that $f(z,\omega)$ is $\F$-measurable.
Thus, $f(z,\omega)$ is a Carath\'edory function.
Thus,
$$ \mathbb{E}\left[ \mathbb{E}_{\mid \F_\xi}\left[\mathfrak{\hat{u}}(\eta(\omega),\omega)
\right]   \right] = \mathbb{E}\left[f(\mathfrak{\hat{u}}(\cdot,\omega),\omega)\right].
 $$
By the nonemptyness of
$\hat{\mathfrak{M}}_{\mathcal{U}}$
and the boundedness of $\mathfrak{u} \in \hat{\mathfrak{M}}_\mathcal{U}$,   $\inf\limits_{\mathfrak{\hat{u}}\in{\hat{\mathfrak{M}}_{\mathcal{U}}}}\mathbb{E}\left[f(\mathfrak{\hat{u}}(\cdot,\omega),\omega)\right]
$ is bounded.
By Lemma \ref{lemma-interchange-expectation}
(here we
 require $\mathfrak{u}$ to
 have finite $p$-th moment which is equivalent to
 $\mathfrak{z}$ having finite $p$-th moment. This additional condition does not affect the result in
 Lemma~\ref{lemma-interchange-expectation}), we have  that
\bgeqn
\inf\limits_{\mathfrak{\hat{u}}\in{\hat{\mathfrak{M}}_{\mathcal{U}}}}\mathbb{E}\left[f(\mathfrak{\hat{u}}(\cdot,\omega),\omega)\right]
=\mathbb{E}\left[\inf_{{u} \in {\hat{\mathcal{U}}}(\omega)} f({u}, \omega)\right],
\label{eq:Lema2-prf-5}
\edeqn
where
$ \hat{\mathcal{U}}(\omega) ={\mathcal{U}}(\xi(\omega))$,
for a.e. $\omega\in \Omega$.
Thus
\bgeqn
\mathbb{E}\left[\inf_{{u} \in \hat{\mathcal{U}}(\omega)} f({u}, \omega)\right]
&=&\mathbb{E}\left[  \inf\limits_{{u}\in \hat{\mathcal{U}}(\omega)} \mathbb{E}\left[ {u}(\eta(\omega))\mid \F_{\xi} \right]   \right]
=\mathbb{E}\left[  \inf\limits_{{u}\in \mathcal{U}(\xi(\omega))} \mathbb{E}\left[ {{u}}(\eta(\omega)) \mid \F_{\xi} \right]   \right]
\nonumber\\
&:=& \mathbb{E}\left[  \inf\limits_{{u}\in \mathcal{U}(\xi)} \mathbb{E}\left[ {{u}}(\eta) \mid \F_{\xi} \right]   \right].
\label{eq:Lema2-prf-6}
\edeqn
Combining (\ref{eq:Lema2-prf-3})-(\ref{eq:Lema2-prf-6}), we obtain
(\ref{eq:lemma2-condi-expt-F-xi}) as desired.
}

\hfill $\Box$
%

 \subsection{Proof of Proposition \ref{prop-rectangular}}
 {
By Lemma \ref{lem-int-change},
\begin{equation}\label{eq-15}
\begin{array}{ll}
&\inf\limits_{\mathfrak{u}_t\in{\mathcal{U}_t}}\mathbb{E}_{|\mathcal{F}_0}\left[\mathfrak{u}_t(Z_t(\xi_t),\xi_{[t-1]})\right]
= \mathbb{E}_{|\mathcal{F}_0}\left[  \inf\limits_{u_t\in \mathcal{U}_t(\xi_{[t-1]})} \mathbb{E}_{|\mathcal{F}_{t-1}}\left[ u_t(Z_t(\xi_t))\right]   \right],
\end{array}
\end{equation}
where $\mathcal{U}_t=\{ {\mathfrak{u}_t}\mid \exists \vec{\mathfrak{u}}_{[1,t-1]} \inmat{ and } \vec{\mathfrak{u}}_{[t+1,T]} \inmat{ such that } [\vec{\mathfrak{u}}_{[1,t-1]},\mathfrak{u}_t, \vec{\mathfrak{u}}_{[t+1,T]}]^\top \in \mathcal{U}  \}$.
From Definition 1, we can
see that $\mathcal{U}_t(\xi_{[t-1]})$ is a
decomposition of $\mathcal{U}$.
By the decomposability of the objective function and feasible set ${\cal U}$, the
tower property and the translation invariance property of the expectation operator, we have
\begin{align}
&\inf\limits_{\vec{\mathfrak{u}}\in \mathcal{U}}\mathbb{E}_{|\mathcal{F}_0}\left[\mathfrak{u}_1(Z_1)+\mathfrak{u}_2(Z_2,\xi_1)+\cdots+\mathfrak{u}_T(Z_T,\xi_{[T-1]})\right] \nonumber\\
=&\inf\limits_{\vec{\mathfrak{u}}\in \mathcal{U}} \sum_{t=1}^T \mathbb{E}_{|\mathcal{F}_0}\left[\mathfrak{u}_t(Z_t,\xi_{[t-1]})\right] \nonumber\\
=&\inf\limits_{\mathfrak{u}_t\in \mathcal{U}_t, t=1,\ldots,T} \sum_{t=1}^T \mathbb{E}_{|\mathcal{F}_0}\left[\mathfrak{u}_t(Z_t,\xi_{[t-1]})\right] \nonumber\\
=& \sum\limits_{t=1}^{T} \inf\limits_{\mathfrak{u}_t\in \mathcal{U}_t} \mathbb{E}_{|\mathcal{F}_0}\left[ \mathfrak{u}_t(Z_t,\xi_{[t-1]})   \right]\nonumber\\
=& \sum\limits_{t=1}^{T} \inf\limits_{\mathfrak{u}_t\in \mathcal{U}_t} \mathbb{E}_{|\mathcal{F}_0}\left[   \mathbb{E}_{|\mathcal{F}_{t-1}}\left[ \mathfrak{u}_t(Z_t,\xi_{[t-1]})\right]   \right]\nonumber\\
=& \sum\limits_{t=1}^{T} \mathbb{E}_{|
\mathcal{F}_0}\left[  \inf\limits_{u_t\in \mathcal{U}_t(\xi_{[t-1]})} \mathbb{E}_{|\mathcal{F}_{t-1}}\left[ u_t(Z_t)\right]   \right] \label{eq-18}\\
=& \mathbb{E}_{|\mathcal{F}_0}\left[\inf\limits_{ u_1\in \mathcal{U}_1} \mathbb{E}_{|\mathcal{F}_0}\left[ u_1(Z_1)\right]\right]+\mathbb{E}_{|\mathcal{F}_0}\left[\inf\limits_{u_2\in \mathcal{U}_2(\xi_{[1]})} \mathbb{E}_{|\mathcal{F}_1}\left[ u_2(Z_2)\right]\right]+\cdots \nonumber \\ &\qquad +\mathbb{E}_{|\mathcal{F}_0}\left[\mathbb{E}_{|\mathcal{F}_1}\left[\cdots\mathbb{E}_{|\mathcal{F}_{T-2}
}\left[\inf\limits_{u_T\in \mathcal{U}_T(\xi_{[T-1]})} \mathbb{E}_{|\mathcal{F}_{T-1}}\left[ u_T(Z_T)\right]\right]\cdots\right]\right] \\ 
=&\inf\limits_{ u_1\in \mathcal{U}_1} \mathbb{E}_{|\mathcal{F}_0}\left[ u_1(Z_1)+\inf\limits_{u_2\in \mathcal{U}_2(\xi_{[1]})} \mathbb{E}_{|\mathcal{F}_1}\left[ u_2(Z_2)+\cdots  +\inf\limits_{u_T\in \mathcal{U}_T(\xi_{[T-1]})} \mathbb{E}_{|\mathcal{F}_{T-1}}\left[ u_T(Z_T)\right]\cdots\right]\right],\label{eq-19}
\end{align}
which gives rise to (\ref{eq:Rectangl}).
\hfill $\Box$
}

\subsection{Proof of Theorem \ref{theorem-dp-reform}}
\label{EC-theorem-dp-reform}
{
We divide the proof into three main steps.

\underline{Step 1.} We begin by decomposing problem
(MS-PRO-SD) into a stagewise
maxmin problem.
By Proposition 1, for any
fixed decision sequence  $
x_1,\cdots,x_T$ and random process $\xi_1,\ldots,\xi_T$, we have
\bgeqn
&&\inf\limits_{\vec{\mathfrak{u}}\in\mathcal{U}}\mathbb{E}\left[{u}_1(h_1\left(x_{1},\xi_1\right))
+\mathfrak{u}_2(h_{2}\left(
{x_{2}(\xi_1)}, \xi_{2}\right),\xi_{1})+\cdots+\mathfrak{u}_T(h_T\left(
{x_{T}(\xi_{[T-1]})}
, \xi_{T}\right),\xi_{[T-1]})\right]\nonumber\\
&=&
\sum_{t=1}^T {{\mathbb{E}_{\F_0}}} \left[ \inf\limits_{ u_t \in \mathcal{U}_t(\xi_{[t-1]})}\mathbb{E}_{|\mathcal{F}_{t-1}}\left[ u_{t}\left(h_t(
{x_{t}(\xi_{[t-1]})}
, \xi_{t})\right)\right]\right].
\label{eq:lfh-thm1-recursive}
\edeqn
Consequently
\bgeqn
V_1 &:=&
\max\limits_{\x_{[1,T]}\in\mathscr{X}_{[1,T]}}\inf\limits_{\vec{\mathfrak{u}}\in\mathcal{U}}\mathbb{E}\left[{u}_1(h_1\left(x_{1},\xi_1\right))
+\mathfrak{u}_2(h_{2}\left(
{x_{2}(\xi_{1})}, \xi_{2}\right),\xi_{1})+\cdots+\mathfrak{u}_T(h_T\left(
{x_{T}(\xi_{[T-1]})}, \xi_{T}\right),\xi_{[T-1]})\right]\nonumber\\
&=&
\max\limits_{\x_{[1,T]}\in \mathscr{X}_{[1,T]}}
{{\mathbb{E}_{\F_0}}}
\left[\sum_{t=1}^T\inf\limits_{ u_t \in \mathcal{U}_t(\xi_{[t-1]})}\mathbb{E}_{|\mathcal{F}_{t-1}}\left[ u_{t}\left(h_t(
{x_{t}(\xi_{[t-1]})}, \xi_{t})\right)\right]\right].\nonumber
\edeqn
Here, we denote $\mathscr{X}_{[t,T]}:=\{ \x_{[t,T]}\mid \x_s(\xi_{[s-1]})\in \mathscr{X}_s(x_{[s-1]},\xi_{[s-1]}), s=t,\ldots,T\} $, $t=1,\ldots,T$, for short.
At stage $t=1,\ldots,T$, for given $x_{[t-1]}$ and $\xi_{[t-1]}$,
we denote the
optimal value of the
sub-optimization problem at remaining stages by
\begin{align}
&V_{t}\left(x_{[t-1]}, \xi_{[t-1]}\right):=\max\limits_{
{\x_{[t,T]}\in \mathscr{X}_{[t,T]}  }}
\mathbb{E}_{\F_{t-1}}
\left[\sum_{s=t}^T\inf\limits_{ u_s \in \mathcal{U}_s(\xi_{[s-1]})}\mathbb{E}_{|\mathcal{F}_{s-1}}\left[ u_{s}\left(h_s(
{x_{s}(\xi_{[s-1]})}
, \xi_{s})\right)\right]\right].&
\label{eq:lfh-thm1-recursive-a}
\end{align}
Let $V_{T+1}(\cdot, \cdot) := 0$. At the first stage, $V_1$ is the optimal value of problem (MS-PRO-
SD). We then prove the dynamic equations \eqref{eq:thm-recursive-formula} between $V_t$ and $V_{t+1}$ by induction.
At stage $T$, we have   \eqref{eq:thm-recursive-formula} directly by the definition above.
We then prove
\eqref{eq:thm-recursive-formula} at
stage $T-1$ in Step 2,
and then prove that
the equation at stage $T-1$ implies the equation at stage $T-2$ in Step 3.
As the
induction relationship between adjacent two stages holds, we can
establish 
the results by induction.

\underline{Step 2.}
We
consider the sub-optimization problem at the last two stages.
On the basis of the right-hand side of (\ref{eq:lfh-thm1-recursive-a}),
we  prove the recursive formula for $t=T-1$.
Observe first that
{
\begin{eqnarray}
\label{eq:thm1-prf-1}
&&
V_{T-1}\left(x_{[T-2]}, \xi_{[T-2]}\right):=
\max\limits_{{x_{[T-1,T]}\in \mathscr{X}_{[T-1,T]}}}
\mathbb{E}_{\F_{T-2}}\left[
\sum_{t=T-1}^T\inf\limits_{ u_t \in \mathcal{U}_t(\xi_{[t-1]})}\mathbb{E}_{|\mathcal{F}_{t-1}}\left[ u_{t}\left(h_t(
{\x_{t}(\xi_{[t-1]})}, \xi_{t})\right)\right]\right].\nonumber\\
&=&\max\limits_{x_{T-1}\in \mathscr{X}_{T-1}\left(x_{[T-2]}, \xi_{[T-2]}\right)\atop
{\x_{T}(\xi_{[T-1]})}\in \mathscr{X}_{T}\left(
{\x_{T-1}}
, \xi_{[T-1]}\right)} \Bigg[
\mathbb{E}_{|\mathcal{F}_{T-2}}\bigg[
\inf\limits_{ u_{T-1}\in \mathcal{U}_{T-1}(\xi_{[T-2]})} \mathbb{E}_{|\mathcal{F}_{T-2}}\Big[ u_{T-1}(h_{T-1}\left(x_{T-1},\xi_{T-1}\right))\Big]\bigg]\nonumber\\ &
&+\mathbb{E}_{|\mathcal{F}_{T-2}}\bigg[ \inf\limits_{u_T\in \mathcal{U}_T(\xi_{[T-1]})} \mathbb{E}_{|\mathcal{F}_{T-1}}\left[ {u}_T(h_T\left(
{\x_{T}(\xi_{[T-1]})}, \xi_{{T}}\right))\right]\bigg]\Bigg]
\nonumber\\ &
= & \max\limits_{x_{T-1}\in \mathscr{X}_{T-1}\left(x_{[T-2]}, \xi_{[T-2]}\right)} \Bigg[  \inf\limits_{ u_{T-1}\in \mathcal{U}_{T-1}(\xi_{[T-2]})} \mathbb{E}_{|\mathcal{F}_{T-2}}\left[ u_{T-1}(h_{T-1}\left(x_{T-1},\xi_{T-1}\right))\right]\nonumber\\ &
&+  \max\limits_{
{\x_{T}(\xi_{[T-1]})}
\in \mathscr{X}_{T}\left(x_{[T-1]}, \xi_{[T-1]}\right)}  \mathbb{E}_{|\mathcal{F}_{T-2}}\bigg[\inf\limits_{u_T\in \mathcal{U}_T(\xi_{[T-1]})} \mathbb{E}_{|\mathcal{F}_{T-1}}\left[ u_T(h_T\left(
{\x_{T}(\xi_{[T-1]})}
, \xi_{{T}}\right))\right]\bigg]\Bigg].
\label{eq:thm1-prf-1-b}
\end{eqnarray}
}
This is because the objective in the square brackets 
is separable and the first term is independent of $x_T$. Let
\bgeqn
f_T(x_T,\xi_{[T-1]}) :=\inf\limits_{u_T\in \mathcal{U}_T(\xi_{[T-1]})} \mathbb{E}_{|\mathcal{F}_{T-1}}\left[ u_T(h_T\left(x_{T}, \xi_{T}\right))\right].
\label{eq:f_T-defi}
\edeqn
For fixed $\xi_{[T-1]}$, since
$\mathcal{U}_T(\xi_{[T-1]})$ is a compact set and $ \mathbb{E}_{|\mathcal{F}_{T-1}}\left[ u_T(h_T\left(x_{T}, \xi_{T}\right))\right]$ is continuous in $x_T$ under conditions (a) and  (b), then
$f_T(x_T,\xi_{[T-1]})$ is finite-valued. Moreover, for any $\hat{x}_T, \tilde{x}_T\in \mathscr{X}_{T}\left(x_{[T-1]}, \xi_{[T-1]}\right)$,
\begin{align}
\label{eq:f--cont}
|f_T(\hat{x}_T,\xi_{[T-1]})
-f_T(\tilde{x}_T,\xi_{[T-1]})|
\leq \sup\limits_{u_T\in \mathcal{U}_T(\xi_{[T-1]})} \mathbb{E}_{|\mathcal{F}_{T-1}}\big[ |u_T(h_T\left(\hat{x}_{T}, \xi_{T}\right))
-u_T(h_T\left(\tilde{x}_{T}, \xi_{T}\right))|
\big].
\end{align}
Since any $u_T\in \mathcal{U}_T(\xi_{[T-1]})$ is
globally Lipschitz continuous
under condition (a),
\bgeqn
\label{eq:u-equi-cont}
|u_T(h_T\left(\hat{x}_{T}, \xi_{T}\right))
-u_T(h_T\left(\tilde{x}_{T}, \xi_{T}\right))| \leq
\kappa(\xi_{[T-1]})
|h_T\left(\hat{x}_{T}, \xi_{T}\right)
-h_T\left(\tilde{x}_{T}, \xi_{T}\right)|,\nonumber\\
\qquad \forall \; \hat{x}_T,\tilde{x}_T\in \mathscr{X}_T(x_{[T-1]},\xi_{[T-1]}),
\edeqn

Under condition (b),
\bgeqn
\label{eq:h-Lip-cont}
|h_T\left(\hat{x}_{T}, \xi_{T}\right)
-h_T\left(\tilde{x}_{T}, \xi_{T}\right)
|\leq \sigma(\xi_T)\|\hat{x}_T-\tilde{x}_T\|,\; \forall \; \hat{x}_T,\tilde{x}_T\in \mathscr{X}_T(x_{[T-1]},\xi_{[T-1]}),
\edeqn
where $\bbe_{|\F_{[T-1]}}[\sigma(\xi_T)]<+\infty$.
Combining (\ref{eq:f--cont})-(\ref{eq:h-Lip-cont}), we obtain
\bgeqn
\label{eq:f--cont-a}
|f_T(\hat{x}_T,\xi_{[T-1]})
-f_T(\tilde{x}_T,\xi_{[T-1]})|
&\leq&
\sup\limits_{u_T\in \mathcal{U}_T(\xi_{[T-1]})}
\mathbb{E}_{|\mathcal{F}_{T-1}}\big[
\kappa(\xi_{[T-1]})
\sigma(\xi_T)\|\hat{x}_T-\tilde{x}_T\|\big]
\nonumber\\
& = & \kappa(\xi_{[T-1]}) \mathbb{E}_{|\mathcal{F}_{T-1}}\big[
\sigma(\xi_T)\big]\|\hat{x}_T-\tilde{x}_T\|.
\edeqn
Hence we obtain the
continuity of
$f_T$ in $x_T$ for fixed $\xi_{T-1}$ and $\xi_T$.
Next, we can show that
$f_T(x_T,\xi_{[T-1]})$ is a Carath\'edory function, 
that is,
 for fixed $x_T$,  $f_T(x_T,\xi_{[T-1]})$
 is $\F_{[T-1]}$-measurable.
To see this, we note that
$ \mathbb{E}_{|\mathcal{F}_{T-1}}\left[ u_T(h_T\left(x_{T}, \xi_{T}\right))\right]$
is continuous in $u_T$ and is $\F_{[T-1]}$-measurable
for fixed $u_T$,
and
 $\mathcal{U}_T(\xi_{[T-1]})$ is $\F_{[T-1]}$-measurable,
 by the marginal map theorem
\cite[Theorem 8.2.11]{AuF09-EC},
$f_T$ is $\F_{[T-1]}$-measurable for fixed $x_T$.

By Lemma \ref{lemma-interchange-expectation}, we have
\bgeqn
\max\limits_{
\bm{x}_{T}(\xi_{[T-1]})\in \mathscr{X}_{T}(\bm{x}_{[T-1]}
, \xi_{[T-1]})
}
\mathbb{E}_{|\mathcal{F}_{T-2}}\bigg[
f_T(\bm{x}_T{(\xi_{[T-1]})},\xi_{[T-1]})
\bigg]
&=& \mathbb{E}_{|\mathcal{F}_{T-2}}\bigg[ \max\limits_{x_{T}\in \mathscr{X}_{T}\left(x_{[T-1]}, \xi_{[T-1]}\right)}
f_T(x_T,\xi_{[T-1]})
\bigg]
\nonumber\\
&=:&
\mathbb{E}_{|\mathcal{F}_{T-2}}\bigg[V_{T}\left(x_{[T-1]}, \xi_{[T-1]}\right)\bigg].
\label{eq:thm1-prf-1-V}
\edeqn
Note that $V_{T}\left(x_{[T-1]}, \xi_{[T-1]}\right)$ is well-defined since
$f_T(x_T,\xi_{[T-1]})$ is uniformly bounded
under the uniform boundedness condition of
$u_T$ and the fact that
$\max\limits_{x_{T}\in \mathscr{X}_{T}\left(x_{[T-1]}, \xi_{[T-1]}\right)}
f_T(x_T,\xi_{[T-1]})$ is $\F_{[T-1]}$-measurable.
Combining (\ref{eq:thm1-prf-1-b}) and (\ref{eq:thm1-prf-1-V}) gives us that 
\begin{eqnarray}
&&{V_{T-1}\left(x_{[T-2]}, \xi_{[T-2]}\right)} \nonumber\\
&
= & \max\limits_{x_{T-1}\in \mathscr{X}_{T-1}\left(x_{[T-2]}, \xi_{[T-2]}\right)}  \Bigg[ \inf\limits_{ u_{T-1}\in \mathcal{U}_{T-1}(\xi_{[T-2]})} \mathbb{E}_{|\mathcal{F}_{T-2}}\left[ u_{T-1}(h_{T-1}\left(x_{T-1},\xi_{T-1}\right))\right]\nonumber\\ &
&+  \mathbb{E}_{|\mathcal{F}_{T-2}}\bigg[ \max\limits_{x_{T}\in \mathscr{X}_{T}\left(x_{[T-1]}, \xi_{[T-1]}\right)}\inf\limits_{u_T\in \mathcal{U}_T(\xi_{[T-1]})} \mathbb{E}_{|\mathcal{F}_{T-1}}\left[ u_T(h_T\left(x_{T}, \xi_{{T}}\right))\right]\bigg]\Bigg] \nonumber\\ &
= & \max\limits_{x_{T-1}\in \mathscr{X}_{T-1}\left(x_{[T-2]}, \xi_{[T-2]}\right)}  \Bigg[ \inf\limits_{ u_{T-1}\in \mathcal{U}_{T-1}(\xi_{[T-2]})} \mathbb{E}_{|\mathcal{F}_{T-2}}\bigg[ u_{T-1}(h_{T-1}\left(x_{T-1},\xi_{T-1}\right))\nonumber\\ &
&+ V_{T}\left(x_{[T-1]}, \xi_{[T-1]}\right) \Big] \bigg].\label{eq-mpro-tc-dy1}
\end{eqnarray}

\underline{Step 3.}  We show the recursive formula for $t=T-2$.
Let
\bgeqn
f_{T-1}(x_{T-1},\xi_{[T-2]}) &:=&
 \inf\limits_{ u_{T-1}\in \mathcal{U}_{T-1}(\xi_{[T-2]})} \mathbb{E}_{|\mathcal{F}_{T-2}}\bigg[ u_{T-1}(h_{T-1}\left(x_{T-1},\xi_{T-1}\right)) + V_{T}\left(x_{[T-1]}, \xi_{[T-1]}\right)\bigg]\nonumber\\
 &=&
 \inf\limits_{ u_{T-1}\in \mathcal{U}_{T-1}(\xi_{[T-2]})} \mathbb{E}_{|\mathcal{F}_{T-2}}\bigg[ u_{T-1}(h_{T-1}\left(x_{T-1},\xi_{T-1}\right)) \bigg]+  \mathbb{E}_{|\mathcal{F}_{T-2}}\bigg[V_{T}\left(x_{[T-1]}, \xi_{[T-1]}\right)\bigg].\nonumber\\
\edeqn
Observe first that
$V_{T}\left(x_{[T-1]}, \xi_{[T-1]}\right)
:=\max\limits_{x_{T}\in \mathscr{X}_{T}\left(x_{[T-1]}, \xi_{[T-1]}\right)}
f_T(x_T,\xi_{[T-1]})$ is a Carath\'eodory function.
To see this, we note that by assumption (c), the feasible set
$\mathscr{X}_{T}\left(x_{[T-1]}, \xi_{[T-1]}\right)
$
is
Lipschitz continuous w.r.t.~$x_{T-1}$.
Together with the continuity of
$f_T(x_T,\xi_{[T-1]})$ in $x_T$, we obtain by
virtue of \cite[Theorem 1]{Kla87-EC} that $ V_{T}\left(x_{[T-1]}, \xi_{[T-1]}\right)$
is continuous in $x_{[T-1]}$.
The measurability
follows from \cite[Theorem 8.2.11]{AuF09-EC}
since $\mathscr{X}_{T}\left(x_{[T-1]}, \xi_{[T-1]}\right)$ is $\F_{[T-1]}$-measurable
and $f_T(x_T,\xi_{[T-1]})$ is a Carath\'eodory
function.
The conclusion follows since the conditional expectation preserves the above-mentioned continuity and  measurability.
We now show that $
 \inf\limits_{ u_{T-1}\in \mathcal{U}_{T-1}(\xi_{[T-2]})} \mathbb{E}_{|\mathcal{F}_{T-2}}\bigg[ u_{T-1}(h_{T-1}\left(x_{T-1},\xi_{T-1}\right))\bigg] $
 is also a Carath\'eodory function. This can be established following a proof analogous to that of $f_{T}$. Summarizing the discussions above, we
 conclude that $f_{T-1}(x_{T-1},\xi_{[T-2]})$
is a
 Carath\'eodory
function.
Thus the optimization problem at stage $T-1$ can be written as
$$
V_{T-1}\left(x_{[T-2]}, \xi_{[T-2]}\right) :=
\max\limits_{x_{T-1}\in \mathscr{X}_{T-1}\left(x_{[T-2]}, \xi_{[T-2]}\right)}
\mathbb{E}_{|\mathcal{F}_{T-2}}\left[ f_{T-1}(x_{T-1},\xi_{[T-2]})\right ].
$$
Since the feasible set is assumed to be compact and the objective
function is continuous in $x_{T-1}$, the optimal solution exists.
Moreover, the Lipschitz continuity of
$\mathscr{X}_{T-1}\left(x_{[T-2]}, \xi_{[T-2]}\right)$
in $x_{[T-2]}$ and the continuity of
$
\mathbb{E}_{|\mathcal{F}_{T-2}}\left[ f_{T-1}(x_{T-1},\xi_{[T-2]})\right ]$ ensures that
$V_{T-1}\left(x_{[T-2]}, \xi_{[T-2]}\right)$ is continuous
in $x_{[T-2]}$ and for fixed $x_{[T-2]}$,
we can show by \cite[Theorem 8.2.11]{AuF09-EC} that
$V_{T-1}\left(x_{[T-2]}, \xi_{[T-2]}\right)$ is $\F_{[T-2]}$-measurable.

Summarizing from the discussions above, 
the
continuity and measurability can be established in the recursive manner. This shows that the recursive formula (\ref{eq:thm-recursive-formula}) holds.

Since the optimal solutions exist at individual
stages and the recursive formula (\ref{eq:thm-recursive-formula}) holds,
the global optimal solution is also
optimal
to the local problems,
i.e., the time consistency of the policy holds.
\hfill $\Box$
}

\subsection{Proof of Proposition \ref{prop-compact}}

{
Part (i). By definition, $u$ is nondecreasing
over $[a,b]$ with $u(a)=0$, $u(b)=1$,
and both $u$ is globally Lipschitz continuous with a uniformly bounded Lipschitz modulus.
The monotonic increasing property and the normalization condition ensure the
boundedness of the utility functions in the set,
the globally Lipschitz continuity guarantees  equicontinuity
of the class of functions. By  Arzel{\` a}-Ascoli Theorem
(see e.g.~\cite[Theorem 2.3]{Bro14-EC}), $\mathcal{U}^{\mathbb{B}}_t(\xi_{[t-1]})$ is a weakly compact set, that is, it is contained by a compact set
in the space of continuous functions.
To show the compactness of the set, it suffices to show that the set is closed. Let $\{u_k\}\subset \mathcal{U}^{\mathbb{B}}_t(\xi_{[t-1]})$ be a sequence converging to $u$ under some norm topology in the $\mathscr{L}^p$ space. The uniform convergence ensures continuity of $u$.
For any fixed
 points $x,y\in [a,b]$,
$$
 |u_k(x)-u_k(y)|\leq L(\xi_{[t-1]})|x-y|,\ \forall k.
$$
By driving $k$ to infinity, we obtain
 $$
 |u(x)-u(y)|\leq L(\xi_{[t-1]})|x-y|,
$$
 which means that $u$ is also Lipschitz continuous
 with modulus being bounded by $L(\xi_{[t-1]})$.
 Moreover, since $u_k$ is a concave function,
 its limit is also a concave function. This shows
 $u\in \mathcal{U}^{\mathbb{B}}_t(\xi_{[t-1]})$.


Part (ii).
Let us show first that
\begin{eqnarray}
\label{eq:U-normial-dynamic-L}
\mathcal{U}^{L}(\xi_{[t-1]}) :=\left\{u\in \mathscr{U}^c\big|
 \inmat{Lip}(u) \leq L(\xi_{[t-1]})\right\}
\end{eqnarray}
is ${\cal F}_{[t-1]}$-measurable.
To this end, we can rewrite $\mathcal{U}^{L}(\xi_{[t-1]})$ as
\begin{eqnarray*}
&\mathcal{U}^{L}(\xi_{[t-1]})&= \left\{u\in \mathscr{U}^c \ \Big|\ \frac{u(x)-u(y)}{x-y} \leq L(\xi_{[t-1]}),\ \forall x,y\in[a,b], x\neq y \right\}\\
&& = \left\{u\in \mathscr{U}^c \mid \sup_{x,y\in[a,b] } \left({u(x)-u(y)} - L(\xi_{[t-1]})({x-y}) \right)\leq 0 \right\}\\
&& = \left\{u \in \mathscr{U}^c \mid g(u,\xi_{[t-1]})\leq 0 \right\},
\end{eqnarray*}
where
$$
g(u,\xi_{[t-1]}):=\sup_{x,y\in[a,b] } \left({u(x)-u(y)} - L(\xi_{[t-1]})({x-y}) \right).
$$
Since $L(\cdot)$ is assumed to be continuous
and
the function $(u,L)\rightarrow {u(x)-u(y)} - L({x-y})$
is linear in $u$ and $L$, then
$g(u,\xi_{[t-1]})$ is continuous
jointly in $\xi_{[t-1]}$
and $u$.
Let 
$$
\tilde{g}(u,\omega):={g}(u,\xi_{[t-1]}(\omega)).
$$
Then
$\tilde{g}:\Z\times \Omega\rightarrow \R$ is
$\F_{t-1}$ measurable for every $z\in \Z$, and $\tilde{g}(\cdot,\omega)$ is continuous for fixed $\omega$. This shows that  $\tilde{g}:\mathbb{Z}\times \Omega \to \bar{R}$ is a Carath\'eodory function.
By Lemma \ref{lemma-level-meaura},
$\mathcal{U}^{L}(\xi_{[t-1](\cdot)})$
is measurable in $\F_{[t-1]}$.

Next, let
$$
  f_k(u,\omega):= z_{k}(\xi_{[t-1]}(\omega)) \mathbb{E}\left[u\left(Y_{k}\right)
  \right]- z_{k}(\xi_{[t-1]}(\omega))\mathbb{E}\left[u\left(W_{k}\right)
  \right],
  k=1,\ldots,K.\
  $$
  Define level sets
  ${\cal L}_{f_k\leq 0}(\omega):=\{u\in \mathscr{U}^c\mid f_k(u,\omega)\leq 0\}$ and we can rewrite the pairwise comparison ambiguity set as
 $$
 \mathcal{U}^P_t(\xi_{[t-1]})(\omega)
=
 \bigcap_{k=1,\ldots,K} {\cal L}_{f_k\leq 0}(\omega) \bigcap \mathcal{U}^{L}(\xi_{[t-1]})(\omega).
 $$

  Since
  $f_k$ is linear in
  $u$ and measurable w.r.t. $\omega$, then it is a Carath\'edory function.  By Lemma \ref{lemma-level-meaura}, we have that
  ${\cal L}_{f_k\leq 0}$
  is closed-valued and measurable.
  By \cite[Theorem 8.2.4]{AuF09-EC}, the intersection of those sets in
  $\mathcal{U}^P_t(\xi_{[t-1]})(\omega)$
  is closed-valued and measurable.

 The measurability of
 $\mathbb{B}(\tilde{\mathfrak{u}}_t(\cdot,\xi_{[t-1]}),r_t(\xi_{[t-1]})) $
 can be observed by
 \cite[Corollary  8.2.13]{AuF09-EC} 
  given that the center $\tilde{\mathfrak{u}}_t(\cdot,\xi_{[t-1]})$ and the radium $r_t(\xi_{[t-1]})$ are $\F_{t-1}$-measurable.
Thus
$$
\mathcal{U}^{\mathbb{B}}_t(\xi_{[t-1]}) = \mathbb{B}(\tilde{\mathfrak{u}}_t(\cdot,\xi_{[t-1]}),r_t(\xi_{[t-1]})) \bigcap \mathcal{U}^{L}(\xi_{[t-1]})
$$
is measurable.

Part (iii). To show the rectangularity of ${\cal U}$, we recall that
 in Proposition~\ref{prop-rectangular}, we have demonstrated that
the ambiguity set ${\cal U}$ defined
in Definition \ref{def-pro-set}
satisfies the rectangularity.
The key underlying reason is that
${\cal U}$ is
constructed by
a series of conditional ambiguity sets $\mathcal{U}_t(\xi_{[t-1]})$ which satisfies the following three properties:
\begin{itemize}
    \item
$\mathcal{U}_t(\xi_{[t-1]})$ is $\F_{t-1}$-measurable;
  \item
  $\mathcal{U}_t(\xi_{[t-1]})$
  comprises continuous, bounded and
  monotonically increasing utility functions;
    \item for
    given $\xi_{[t-1]}$, $\mathcal{U}_t(\xi_{[t-1]})$ is a compact set.
\end{itemize}
Thus, it suffices to show here
that the ambiguity set
${\cal U}$ constructed through conditional ambiguity sets
$\mathcal{U}^P_t(\xi_{[t-1]})$ and
$\mathcal{U}^{\mathbb{B}}_t(\xi_{[t-1]}) $
satisfies the three properties.
The 
 measurability
is 
addressed in Part (ii) and
the compactnesss is addressed in Part (i).
Continuity, boundedness and monotonicity follow from the definition of
$\mathscr{U}^c$.
Thus, the ambiguity $\cal{U}$ constructed by
$\mathcal{U}^P_t(\xi_{[t-1]})$ or
$\mathcal{U}^{\mathbb{B}}_t(\xi_{[t-1]})$ in the form of \eqref{eq:Ambiguityset-U-SD} satisfy the conditions in Definition \ref{def-pro-set} and is thus 
rectangular. \hfill $\Box$
}




 {
   \begin{lemma}[Measurability of level set-mapping]\label{lemma-level-meaura}
   Let $\mathbb{Z}$ be a Polish space and $(\Omega,\F,\mathbb{P})$ a probability space.
   For a random function $f:\mathbb{Z}\times \Omega \rightarrow \bar{\R}$, i.e., $f(\cdot,\omega)$ is lsc for any fixed $\omega\in\Omega$ and $f(z,\cdot)$ is measurable for any fixed $z\in \mathbb{Z}$,
   the level set mapping ${\cal L}_{f\leq \alpha}: \Omega\rightrightarrows \mathbb{Z}$, defined by ${\cal L}_{f\leq \alpha}(\omega) := \{z\in \mathbb{Z} \mid f(z,\omega)\leq \alpha\}$, is closed-valued and measurable.
   \end{lemma}

  \begin{proof}{Proof:}
  For fixed $\omega$, the closedness of
  ${\cal L}_{f\leq \alpha}(\omega)$
  follows directly from the lsc of $f$ in $z$.
  For any given closed set $Z\in\mathbb{Z}$, we consider a closed-valued mapping $R:\omega\rightarrow Z\times [-\infty,\alpha] $. Thus a constant-valued mapping is naturally measurable. By the
  lsc  of $f(\cdot,\omega)$ for any $\omega$, the epi-mapping ${\rm epi} \; f(\omega):\Omega\rightrightarrows \mathbb{Z}\times \R $
  is closed-valued and measurable (see \cite[Definition 7.35]{SDR14} when
  $z$ is finite dimensional).
  Then we have that
  $$ {\cal L}_{f\leq \alpha}^{-1}(Z)=\left\{\omega\mid {\rm epi} \;f(
  \omega) \bigcap (Z\times [-\infty,\alpha]) \neq \emptyset \right\}=\dom({\rm epi} f \bigcap (Z\times [-\infty,\alpha]) ).
  $$
  By \cite[Theorem 8.2.4]{AuF09-EC},
$\omega\rightarrow {\rm epi}\; f(\omega) \bigcap (Z\times [-\infty,\alpha])$ is a closed-valued and measurable set-valued mapping. Thus, its domain $\dom({\rm epi}\; f \bigcap (Z\times [-\infty,\alpha]) )\in \F$. 
This shows that ${\cal L}_{f\leq\alpha}^{-1}(Z)\in \F$. By \cite[Theorem 8.1.4 (iii)]{AuF09-EC}, we know that the ${\cal L}_{f\leq\alpha}$, as an inverse of
  ${\cal L}_{f\leq \alpha}^{-1}(Z)$,
  is measurable.
  \hfill $\Box$
  \end{proof}
}

\subsection{Proof of Lemma \ref{l-dist-B-B_N-zeta}}

Before presenting a proof for the lemma,
we need the following technical result which is drawn from \cite[Proposition 4.1]{GuX21}  and the proof of \cite[Theorem 4.1]{GuX21}.

\begin{proposition}
\label{p-PC-appr}
Let $u\in \mathcal{U}$.
Assume that
$u(\cdot)$ is
 Lipschitz continuous over an interval $[a,b]$ with modulus $L$
 and $u_N$ is its piecewise linear approximation, that is,
\bgeqn
u_N(y) := u(y_{i-1}) + \frac{u(y_{i})- u(y_{i-1})}{y_i-y_{i-1}} (y-y_{i-1}), \; \inmat{for} \; y\in [y_{i-1},y_i], \; i=2,\cdots,N,
\label{eq:u-N}
\edeqn
where $y_1=a, y_N=b$. Let  $\beta_N:= \max_{i=2,\cdots,N} (y_i-y_{i-1})$.
Then the following assertions hold.

\begin{itemize}
    \item[(i)]
$\|u_N-u\|_{\infty} :=\sup_{y\in [a,b]} |u_N(y)-u(y)| \leq L\beta_N$.

\item[(ii)] For $\mathscr{G}=\mathscr{G}_{L}$,
$
\dd_\mathscr{G}(u,u_N)\leq 2\beta_N.
$
    \item[(iii)] For $\mathscr{G}=\mathscr{G}_{I}$,
$
\dd_\mathscr{G}(u,u_N)\leq L\beta_N.
$

\end{itemize}

\end{proposition}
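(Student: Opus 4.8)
The plan is to prove the three estimates in the order (i), (iii), (ii), since (iii) is an immediate consequence of (i) while (ii) is the only part requiring real work. Throughout I would use two elementary facts about the interpolant: $u_N$ agrees with $u$ at every breakpoint $y_1,\dots,y_N$ (so in particular $u_N(a)=u(a)$, $u_N(b)=u(b)$, and $u$ and $u_N$ have equal increments over each $[y_{i-1},y_i]$), and on each subinterval both $u$ and $u_N$ are non-decreasing with the same endpoint values. The normalization $u(a)=0$, $u(b)=1$ from Section~\ref{sec-detail-set} would only be invoked at the very last step of (ii).

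For (i): I would fix $y\in[y_{i-1},y_i]$; monotonicity of $u$ gives $u(y_{i-1})\le u(y)\le u(y_i)$, and since $u_N$ is the affine interpolant on that subinterval we also have $u(y_{i-1})\le u_N(y)\le u(y_i)$, whence $|u_N(y)-u(y)|\le u(y_i)-u(y_{i-1})\le L(y_i-y_{i-1})\le L\beta_N$, and a supremum over $y$ finishes it. For (iii): for $g=\mathbbm{1}_{(a,z]}$ with $z\in[a,b]$, continuity and monotonicity of $u$ make the Lebesgue--Stieltjes integral collapse to $\langle g,u\rangle=u(z)-u(a)$, and likewise $\langle g,u_N\rangle=u_N(z)-u_N(a)=u_N(z)-u(a)$; subtracting, $\langle g,u\rangle-\langle g,u_N\rangle=u(z)-u_N(z)$, so $\dd_{\mathscr{G}_I}(u,u_N)=\|u-u_N\|_\infty$ and (i) applies.

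For (ii): I would fix an arbitrary $g\in\mathscr{G}_L$, write $\langle g,u\rangle-\langle g,u_N\rangle=\sum_{i=2}^{N}\int_{y_{i-1}}^{y_i} g\,d(u-u_N)$, and on each subinterval use the equal-increment identity $\int_{y_{i-1}}^{y_i} d(u-u_N)=0$ to replace $g$ by $g-g(y_{i-1})$ without changing the value of the integral. Then $|g(z)-g(y_{i-1})|\le z-y_{i-1}\le\beta_N$ on $[y_{i-1},y_i]$ by the $1$-Lipschitz bound, while the total variation of the signed function $u-u_N$ over $[y_{i-1},y_i]$ is at most $\mathrm{Var}(u)+\mathrm{Var}(u_N)=2\bigl(u(y_i)-u(y_{i-1})\bigr)$, both functions being non-decreasing there. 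Hence $\bigl|\int_{y_{i-1}}^{y_i} g\,d(u-u_N)\bigr|\le 2\beta_N\bigl(u(y_i)-u(y_{i-1})\bigr)$, and summing the telescoping series gives $|\langle g,u\rangle-\langle g,u_N\rangle|\le 2\beta_N\bigl(u(b)-u(a)\bigr)=2\beta_N$; taking the supremum over $g$ yields $\dd_{\mathscr{G}_L}(u,u_N)\le 2\beta_N$.

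The one genuinely delicate point is the Stieltjes bookkeeping in (ii): one has to check that it is exactly the interpolation identity $\int_{y_{i-1}}^{y_i}d(u-u_N)=0$ that licenses subtracting the constant $g(y_{i-1})$, and that estimating the total variation of the difference by $\mathrm{Var}(u)+\mathrm{Var}(u_N)$ costs nothing more than the factor $2$ already present in the statement; the rest is routine. As the paper remarks, the three bounds can alternatively be quoted verbatim from \cite[Proposition~4.1]{GuX21} and the proof of \cite[Theorem~4.1]{GuX21}.
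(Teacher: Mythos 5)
Your three arguments are all correct, and since the paper supplies no proof of this proposition — it is imported verbatim from \cite[Proposition 4.1]{GuX21} and the proof of \cite[Theorem 4.1]{GuX21} — your write-up stands as a valid self-contained justification rather than a variant of anything in the text. The key points all check out: in (i) the observation that both $u$ and $u_N$ take values in $[u(y_{i-1}),u(y_i)]$ on each subinterval gives the sharp constant $L\beta_N$ (a naive triangle inequality would give $2L\beta_N$); in (iii) continuity of $u$ and $u_N$ ensures the Stieltjes measures have no atoms, so $\langle \mathbbm{1}_{(a,z]},u\rangle=u(z)-u(a)$ without endpoint ambiguity; and in (ii) the equal-increment identity $\int_{y_{i-1}}^{y_i}d(u-u_N)=0$ is exactly what licenses replacing $g$ by $g-g(y_{i-1})$, after which the bound $\sup|g-g(y_{i-1})|\cdot\bigl(\mathrm{Var}(u)+\mathrm{Var}(u_N)\bigr)\le \beta_N\cdot 2\bigl(u(y_i)-u(y_{i-1})\bigr)$ telescopes to $2\beta_N\bigl(u(b)-u(a)\bigr)=2\beta_N$. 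Note that this last step does require the normalization $u(a)=0$, $u(b)=1$ (otherwise the bound carries a factor $u(b)-u(a)$); the statement's hypothesis ``$u\in\mathcal{U}$'' should be read as $u\in\mathscr{U}$, the normalized class of Section 5, for the constant $2\beta_N$ to be correct as stated, and you rightly flag this as the one place the normalization enters.
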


\noindent
\textbf{Proof.}
We call $u_N$ defined in (\ref{eq:u-N}) a {\em projection} of $u$ on $\mathscr{U}_N$.
Using the proposition, we
are able to derive an upper bound for the Hausdorff distance between $\mathbb{B}_N(u,r)$ and $\mathbb{B}(v,r)$.

It suffices to show that
\bgeqn
\mathbb{D}(\mathbb{B}(u,r_1), \mathbb{B}(v,r_2);   \dd_\mathscr{G}) \leq \dd_\mathscr{G}(u,v) + |r_2-r_1|
\label{eq:H-two-balls-U-1}
\edeqn
and
\bgeqn
\mathbb{D}(\mathbb{B}(v,r_2), \mathbb{B}(u,r_1);   \dd_\mathscr{G}) \leq \dd_\mathscr{G}(u,v) + |r_2-r_1|.
\label{eq:H-two-balls-U-2}
\edeqn
We prove (\ref{eq:H-two-balls-U-1}). The conclusion is trivial if $\mathbb{B}(u,r_1)\subset \mathbb{B}(v,r_2)$, so we consider the case that $\mathbb{B}(u,r_1)\not\subset \mathbb{B}(v,r_2)$. Let
$\tilde{v}\in \mathbb{B}(u,r_1)\backslash \mathbb{B}(v,r_2)$ and $\lambda=r_2/\dd_\mathscr{G}(\tilde{v},v)$. Then $\lambda\in (0,1)$.
Let $v_\lambda = \lambda v+ (1-\lambda)\tilde{v}$. Then
$$
\dd_\mathscr{G}(v_\lambda,v) =\dd_\mathscr{G}((1-\lambda) v+ \lambda \tilde{v},v)
\leq \lambda\dd_\mathscr{G}(\tilde{v},v)=r_2.
$$
This shows $v_\lambda \in \mathbb{B}(v,r_2)$. Thus
\bgeq
\dd_\mathscr{G}(\tilde{v},\mathbb{B}(v,r_2))\leq \dd_\mathscr{G}(\tilde{v},v_\lambda)=(1-\lambda)\dd_\mathscr{G}(\tilde{v},v)=\dd_\mathscr{G}(\tilde{v},v)-r_2 \leq \dd_\mathscr{G}(u,v)+r_1-r_2.
\edeq
Swapping the positions of the two balls in the above discussions, we obtain (\ref{eq:H-two-balls-U-2}). \hfill $\Box$

\textbf{Proof of Lemma \ref{l-dist-B-B_N-zeta}.}
Inequality (\ref{eq:H-two-balls-U-U_N-u-proj-u}) follows straightforwardly from (\ref{eq:H-two-balls-U-U_N}) and
(\ref{eq:u-N-appr-u-zeta-metric}), so we only prove (\ref{eq:H-two-balls-U-U_N}).
 By the triangle inequality,
\bgeq
\mathbb{H}(\mathbb{B}_N(u,r), \mathbb{B}(v,r);   \dd_\mathscr{G})
\leq
\mathbb{H}(\mathbb{B}_N(u,r), \mathbb{B}(u,r);   \dd_\mathscr{G})
+ \dd_\mathscr{G}(u,v).
\edeq
So it suffices to show that
\bgeq
\mathbb{H}(\mathbb{B}_N(u,r), \mathbb{B}(u,r);   \dd_\mathscr{G}) \leq  4\max(2,L)\beta_N.
\edeq
By definition, $\mathbb{B}_N(u,r)\subset \mathbb{B}(u,r)$, so it is enough  to show that
\bgeqn
\mathbb{D}(\mathbb{B}(u,r), \mathbb{B}_N(u,r);   \dd_\mathscr{G}) \leq 4\max(2,L)\beta_N.
\label{eq:D-two-balls-U-U_N}
\edeqn
Let $\epsilon$ be a small positive number and
$u^\epsilon\in \mathbb{B}(u,r)\backslash \mathbb{B}_N(u,r)$ be such that
$$
\dd_\mathscr{G}(u^\epsilon, \mathbb{B}_N(u,r)) \geq \mathbb{D}(\mathbb{B}(u,r), \mathbb{B}_N(u,r); \dd_\mathscr{G}) -\epsilon.
$$
For the given $u^\epsilon$, we may find $u^\epsilon_N\in \mathscr{U}_N$ as that in Proposition \ref{p-PC-appr} such that
\bgeq
\label{eq:u-N-appr-u-zeta-metric}
 \dd_{\mathscr{G}}(u^\epsilon,u^\epsilon_N) \leq \max(2,L)\beta_N.
\edeq
If $u^\epsilon_N\in \mathbb{B}_N(u,r)$, then
$$
\mathbb{D}(\mathbb{B}(u,r), \mathbb{B}_N(u,r); \dd_\mathscr{G})
\leq \dd_\mathscr{G}(u^\epsilon, \mathbb{B}_N(u,r)) +\epsilon\leq
\dd_{\mathscr{G}}(u^\epsilon,u^\epsilon_N)+\epsilon
 \leq \max(2,L)\beta_N+\epsilon
$$
and hence (\ref{eq:D-two-balls-U-U_N}) because $\epsilon$ can be driven to zero.
So we are left with the case that  $u^\epsilon_N\not\in \mathbb{B}_N(u,r)$.
Let $\lambda = \frac{r}{\dd_\mathscr{G}(u^\epsilon_N,u)}$. Then $\lambda\in (0,1)$. Let
$
u_\lambda = \lambda u^\epsilon_N+ (1-\lambda)u.
$
Then $u_\lambda\in\mathscr{U}_N$ and
$
\dd_\mathscr{G}(u_\lambda,u) = \lambda \dd_\mathscr{G}(u^\epsilon_N,u)=r.
$
This shows $u_\lambda\in \mathbb{B}_N(u,r)$. Thus
\bgeq
\dd_\mathscr{G}(u^\epsilon, \mathbb{B}_N(u,r)) &\leq& \dd_\mathscr{G}(u^\epsilon, u_\lambda)
\leq \dd_\mathscr{G}(u^\epsilon, u^\epsilon_N) + \dd_\mathscr{G}(u^\epsilon_N, u_\lambda)\\
&\leq&
2\max(2,L)\beta_N + (1-\lambda)\dd_\mathscr{G}(u^\epsilon_N, u)=
2\max(2,L)\beta_N + \dd_\mathscr{G}(u^\epsilon_N, u)-r\\
&\leq& 2\max(2,L)\beta_N + \dd_\mathscr{G}(u^\epsilon_N, u^\epsilon)+\dd_\mathscr{G}(u^\epsilon, u)-r\leq
4\max(2,L)\beta_N +r-r\\
&=& 4\max(2,L)\beta_N,
\edeq
and hence
$$
\mathbb{D}(\mathbb{B}(u,r), \mathbb{B}_N(u,r); \dd_\mathscr{G}) \leq \dd_\mathscr{G}(u^\epsilon, \mathbb{B}_N(u,r))+\epsilon
$$
which gives (\ref{eq:D-two-balls-U-U_N}) by driving $\epsilon$ to zero. \hfill $\Box$

\subsection{Proof of Theorem \ref{theorem-error-bound}}
 We prove by induction.
Observe that for any $x_{t-1}$ and $\xi_{[t-1]}$, $t=2,\dots,T$,
\bgeq
&&\left|V_{t}\left(x_{[t-1]}, \xi_{[t-1]}\right)-\tilde{V}_{t}\left(x_{[t-1]}, \xi_{[t-1]}\right)\right|\nonumber\\
&\leq&
\max\limits_{x_{t} \in \mathscr{X}_{t}\left(x_{[t-1]}, \xi_{[t-1]}\right)}
\left|\inf
\limits_{ u_t \in \mathcal{U}^{\mathbb{B}}_t(\xi_{[t-1]})}\mathbb{E}_{|\mathcal{F}_{t-1}}\left[ u_{t}\left(h_t(x_{t}, \xi_{t})\right)+V_{t+1}\left(x_{[t]}, \xi_{[t]}\right)\right]\right.\nonumber\\
&&\qquad\qquad
-\left.\inf
\limits_{ u_t \in {\mathcal{U}^{\mathbb{B}_N}_t}(\xi_{[t-1]})}\mathbb{E}_{|\mathcal{F}_{t-1}}
\left[ u_{t}\left(h_t(x_{t}, \xi_{t})\right)+\tilde{V}_{t+1}\left(x_{[t]}, \xi_{[t]}\right)\right]
\right|\nonumber\\
&\leq& \mathbb{H} \left(
\mathbb{B}(  \tilde{\mathfrak{u}}_t(\cdot,\xi_{[t-1]}),r_t(\xi_{[t-1]})) ,
\mathbb{B}_N(\tilde{\mathfrak{u}}^N_t(\cdot,\xi_{[t-1]}),r_t(\xi_{[t-1]})); \dd_{\mathscr{G}}
\right)\nonumber\\
&&+\max\limits_{x_{t} \in \mathscr{X}_{t}\left(x_{[t-1]}, \xi_{[t-1]}\right)} \mathbb{E}_{|\mathcal{F}_{t-1}} \left[\left|V_{t+1}\left(x_{[t]}, \xi_{[t]}\right)
-\tilde{V}_{t+1}\left(x_{[t]}, \xi_{[t]}\right)
\right|\right]
\nonumber\\
&\leq& 
6\max(2,L(\xi_{[t-1]}))\beta_N(\xi_{[t-1]})+
\max\limits_{x_{t} \in \mathscr{X}_{t}\left(x_{[t-1]}, \xi_{[t-1]}\right)}\mathbb{E}_{|\mathcal{F}_{t-1}}\left[\left|
V_{t+1}\left(x_{[t]}, \xi_{[t]}\right)
-\tilde{V}_{t+1}\left(x_{[t]}, \xi_{[t]}\right)
\right|\right],\nonumber\\
\edeq
where
$V_{T+1}=0$ and $\widetilde{V}_{T+1}=0$ and the last inequality follows
from Lemma \ref{l-dist-B-B_N-zeta}.
Assume for stage $t+1$ that
\bgeq
\left|V_{t+1}\left(x_{[t]}, \xi_{[t]}\right)-\tilde{V}_{t+1}\left(x_{[t]}, \xi_{[t]}\right)\right|\leq
\sum_{s=t+1}^T 6\mathbb{E}\left[
\max(2,L(\xi_{[s-1]}))\beta_N(\xi_{[s-1]})
\mid\mathcal{F}_{t}\right],
\edeq
for any fixed $x_t$ and $\xi_{[t]}$.
Then
\bgeq
&&\left|V_{t}\left(x_{[t-1]}, \xi_{[t-1]}\right)-\tilde{V}_{t}\left(x_{[t-1]}, \xi_{[t-1]}\right)\right|\nonumber\\
&\leq& 6
\max(2,L(\xi_{[t-1]}))\beta_N(\xi_{[t-1]})
+ \mathbb{E}_{|\mathcal{F}_{t-1}}\left[\sum_{s=t+1}^T 6\mathbb{E}\left[
\max(2,L(\xi_{[s-1]}))\beta_N(\xi_{[s-1]})
\mid\mathcal{F}_{t}\right]\right]\\
&\leq &\sum_{s=t}^T 6\mathbb{E}\left[
\max(2,L(\xi_{[s-1]}))\beta_N(\xi_{[s-1]})
\mid\mathcal{F}_{t-1}\right],
\edeq
which gives rise to (\ref{eq:errr-bnd-v}).
\hfill $\Box$

\subsection{Proof of Theorem \ref{theorem-dp-sddp}}
The resulting robust dynamic programming equation can be written as
\begin{equation}\label{eq-dynamic-aprox}
\begin{array}{l}
\widetilde{V}_{t}\left(x_{[t-1]}, \xi_{[t-1]}\right) =\max\limits_{x_{t} \in \mathscr{X}_{t}\left(x_{[t-1]}, \xi_{[t-1]}\right)}\inf\limits_{ u_t \in \mathcal{U}^K_t(\xi_{[t-1]}) }\mathbb{E}_{|\mathcal{F}_{t-1}}\left[ u_{t}\left(h_t(x_{t}, \xi_{t})\right)+\widetilde{V}_{t+1}\left(x_{[t]}, \xi_{[t]}\right)\right].
\end{array}
\end{equation}
We can separate the maximin operations by writing
$\widetilde{V}_{t}\left(x_{[t-1]}, \xi_{[t-1]}\right)$ as
\bgeq
\widetilde{V}_{t}\left(x_{[t-1]}, \xi_{[t-1]}\right) = \max\limits_{x_{t} \in \mathscr{X}_{t}\left(x_{[t-1]}, \xi_{[t-1]}\right)} \hat{V}_t\left(x_{[t]}, \xi_{[t]}\right) +\mathbb{E}_{|\mathcal{F}_{t-1}}\left[ \widetilde{V}_{t+1}\left(x_{[t]}, \xi_{[t]}\right)\right],
\edeq
where
\begin{subequations}\label{eq-vt}
\begin{eqnarray}
\hat{V}_t\left(x_{[t]}, \xi_{[t]}\right) := &\inf\limits_{u_t} & \mathbb{E}_{|\mathcal{F}_{t-1}}\left[ u_t\left(h_t(x_{t}, \xi_{t})\right)\right],\\
 &\inmat{s.t.} &  \dd_K(u_t, \tilde{\mathfrak{u}}_t^N(\cdot,\xi_{[t-1]}) )\leq r_t(\xi_{[t-1]}),  \label{eq-vt-kan-ball}\\
 && u_t\in \mathscr{U}_N,   \label{eq-vt-kan-approx}\\
 && \inmat{Lip}(u_t) \leq L,  \label{eq-vt-kan-lip}\\
 && u''_t\leq 0.  \label{eq-vt-concave}
\end{eqnarray}
\end{subequations}
By utilizing the piecewise linear structure of $u$
and setting $\alpha_j=u_t(y_j)$ and $\beta_j=u'_t(y_j)$
at the breakpoints  $y_j$, $j=1,\ldots,N$, we
 can effectively write \eqref{eq-vt} as
\begin{subequations}\label{eq-ec-29}
\begin{eqnarray}
&\hat{V}_t\left(x_{[t]}, \xi_{[t]}\right) := \\
& \inf\limits_{\lambda,\mu,\rho,\phi, \alpha, \beta,\varepsilon, \varphi} & \sum_{i=1}^{S} \mathbb{P}(\xi_t=\xi^i_t|\xi_{[t-1]})
\left(\varepsilon_{i} h_t(x_{t}, \xi^i_{t})+ \varphi_{i} \right)\\
& \inmat{s.t.}  & \frac{1}{2}\sum_{j=2}^N  (\lambda_j + \mu_j + \rho_j + \phi_j)(y_{j}- y_{j-1})^2  \leq r_t(\xi_{[t-1]}) \label{eq-vt2-kan-ball-1} \\
&&  \tilde{\beta}_j -\beta_j + \lambda_j-\mu_j + \rho_j - \phi_j =0,\  j=2,\cdots,N, \label{eq-vt2-kan-ball-2}\\
&& (\mu_{2}-\lambda_{2})(y_{2}-y_1) =0, \label{eq-vt2-kan-ball-3} \\
&& (\mu_{j+1}\!-\! \lambda_{j+1})(y_{j+1}\!-\! y_j)\! +\! (\rho_{j}\!-\! \phi_{j})(y_{j}\!-\! y_{j-1})\! =\! 0, j=2,\! \cdots\!,N\! -\! 1,\label{eq-vt2-kan-ball-4}  \\
&& (\rho_{N}-\phi_{N})(y_{N}-y_{N-1})   =0,  \label{eq-vt2-kan-ball-5}\\
&& \mu_j,\lambda_j,\rho_j,\phi_j \geq 0,\ j=2,\cdots,N. \label{eq-vt2-kan-ball-6}\\
& &{y}_{j} \varepsilon_{i}+\varphi_{i} \geq \alpha_{j}, \ i=1,\ldots,S,\ j =1, \ldots, N, \label{eq-vt2-concave-upper} \\
& & \alpha_{j+1}-\alpha_j= \beta_{j+1}\left({y}_{j+1}-{y}_{j}\right),\  j =1,\ldots,N-1, \label{eq-vt2-dev1} \\
&& \alpha_{j+1}-\alpha_j \geq \beta_{j+2}\left({y}_{j+1}-{y}_{j}\right),\ j =1,\ldots,N-2, \label{eq-vt2-concave} \\
&& 0 \leq \beta_{j+1} \leq L(\xi_{[t-1]}),\  j =1,\ldots,N-1, \label{eq-vt2-lip1} \\
&& \alpha_{1}=0,\ \alpha_{N}=1,\  \varepsilon_{i} \geq 0,\  i =1,\ldots,S, \label{eq-vt2-norm}
\end{eqnarray}
\end{subequations}
where constraints
\eqref{eq-vt2-kan-ball-1}-\eqref{eq-vt2-kan-ball-6}
characterize the Kantorovich ball
\eqref{eq-vt-kan-ball}
as we described in \eqref{eq:Kant-u-v-LP-dual}.
Constraint \eqref{eq-vt2-dev1} characterizes
the piecewise linear structure of $u_t$ in \eqref{eq-vt-kan-approx}
and constraints \eqref{eq-vt2-dev1}-\eqref{eq-vt2-concave}
imply that $\beta_j\geq \beta_{j+1}$ and hence
the concavity of the piecewise linear utility function $u_t$.
Constraint \eqref{eq-vt2-lip1} is concerned with the non-decreasing
property and
Lipschitz continuity of $u_t$
with modules bounded by $L(\xi_{[t-1]})$ as in \eqref{eq-vt-kan-lip}.
As in the literature of PRO models
in one-stage decision making,
the evaluation of the utility function at point $h_t(x_{t}, \xi^i_{t})$
in the objective is carried out by
a linear function passing through
point $(h_t(x_{t}, \xi^i_{t}), u_t(h_t(x_{t}, \xi^i_{t}))$,
with slope $\varepsilon_{i}$ and intercept $\varphi_{i}$.
Constraint \eqref{eq-vt2-concave-upper} requires that all those linear pieces upper bound $u_t$ at those breakpoints. $\tilde{\beta}_j=\frac{\tilde{\mathfrak{u}}_t(y_{j},\xi_{[t-1]})- \tilde{\mathfrak{u}}_t(y_{j-1},\xi_{[t-1]})}{y_j-y_{j-1}}$ is the slope of nominal utility at those breakpoints.
By taking the duality of the linear program \eqref{eq-ec-29}, we obtain
\begin{small}
\begin{eqnarray*}
&\max \quad & \theta_{N-1}+\sum_{i=1}^{S}\mu_{i,N}-L(\xi_{[t-1]}) \sum_{j=1}^{N-1} \eta_{j}
- \sum_{j=2}^N \tilde{\beta}_j w_j -r_t(\xi_{[t-1]}) \varsigma\nonumber\\ &
\text { s.t. } & \sum_{j=1}^{N} {y}_{j} \mu_{i, j} \leq \mathbb{P}(\xi_t=\xi^i_t|\xi_{[t-1]})
 h_t(x_{t}, \xi^i_{t}) ,\ i=1,\ldots,S, \nonumber\\ &
& \frac{p_{i}}{p_{s}}-\sum_{j=1}^{N} \mu_{i,j}=0,\ i=1,\ldots,S,\nonumber\\ &
& \theta_{j-1} {y}_{j-1}-\theta_{j-1} {y}_{j}+v_{j-2}\left({y}_{j-1}-{y}_{j-2}\right)+
w_j+\eta_{j-1} 
\geq 0,\nonumber\\ &
&\ j=3, \cdots, N-1, \nonumber\\ &
& \theta_{1} {y}_{1}-\theta_{1} {y}_{2}+
w_2+\eta_{1} 
\geq 0,\nonumber\\ &
&\theta_{N-1} {y}_{N-1}-\theta_{N-1} {y}_{N}+v_{N-2}\left({y}_{N-1}-{y}_{N-2}\right)+
w_N+\eta_{N-1} 
\geq 0, \nonumber\\ &
&\theta_{j-1}-\theta_{j}+\sum_{i=1}^{S} \mu_{i,j}-v_{j-1}+v_{j}=0,\  j=2, \cdots, N-2, \nonumber\\ &
&\theta_{N-2}-\theta_{N-1}+\sum_{i=1}^{S} \mu_{i,N-1}-v_{N-2}=0, \nonumber\\ &
&  w_j\leq z_{j-1}(y_{j}- y_{j-1})+\frac{1}{2}(y_{j}- y_{j-1})^2 \varsigma ,  j=2,\cdots,N, \nonumber\\ &
&-w_j\leq -z_{j-1}(y_{j}- y_{j-1})+\frac{1}{2}(y_{j}- y_{j-1})^2 \varsigma, j=2,\cdots,N, \nonumber\\ &
&  w_j\leq z_{j}(y_{j}- y_{j-1})+\frac{1}{2}(y_{j}- y_{j-1})^2 \varsigma ,  j=2,\cdots,N, \nonumber\\ &
&-w_j\leq -z_{j}(y_{j}- y_{j-1})+\frac{1}{2}(y_{j}- y_{j-1})^2 \varsigma, j=2,\cdots,N, \nonumber\\ &
&\theta \in \mathbb{R}^{N-1}, v \in \mathbb{R}_+^{N-2}, \eta \in \mathbb{R}_+^{N-1},
\mu \in \mathbb{R}_+^{S \times  N},\varsigma\in \mathbb{R}_+, w\in \mathbb{R}^{N-1}, z\in \mathbb{R}^{N},
\end{eqnarray*}
\end{small}
Taking this duality form back to \eqref{eq-dynamic-aprox} gives the results.
\hfill $\Box$

 \section{An example of time inconsistency}\label{sec-example}

In the Section \ref{sec-robust}, we have demonstrated the rectanglarity of the ambiguity set ${\cal U}$ and subsequently time consistency of problem (MS-PRO-SD).
It is natural  to ask whether the same property is retained
by the ambiguity set of state-independent utility functions and the
robust model (MS-PRO-SID). The answer is no. In this section, we use a counter example to illustrate this fact.

Consider a preference robust counterpart of the stage-wise return rate utility maximization problem in Example \ref{ex-utility-max} with three time points $0,1,2$ and two investment stages $1,2$ between the time points. At each time point, there are two branches 
from the current state with probability $50\%$ each. Thus, we have a two-stage scenario tree with an initial node at time point 0, two nodes at the end of the first stage and four leaf nodes at the end of the second stage.
We assume that there are two risky assets with random excess
return rates ${r}_t=[{r}_t^1,{r}_t^2]$ in range $[0,1]$ at the two stages $t=1,2$. We denote the realization of ${r}_t$ on the $k$-th node at stage $t$ by ${r}_{t,k}$.
We mark the return rates ${r}_{t,k}$ around
the nodes of
each scenario on the scenario tree, see Figure \ref{fig-scenario-tree}.

\begin{figure}[h]
\centering
\begin{tikzpicture}[scale=0.7]
\node [fill=black,circle,,scale=0.8] (0) at (-0.5,0){};
\node [fill=black,circle,,scale=0.8] (1) at (4,1.25) {};
\node [fill=black,circle,,scale=0.8] (2) at (4,-1.25) {};
\node [fill=black,circle,,scale=0.8] (3) at (10, 2.2) {};
\node [fill=black,circle,,scale=0.8] (4) at (10, 0.8) {};
\node [fill=black,circle,,scale=0.8] (5) at (10, -0.8) {};
\node [fill=black,circle,,scale=0.8] (6) at (10, -2.2) {};
\draw[-latex, thick] (0) to (1);
\draw[-latex,thick] (0) to (2);
\draw[-latex,thick] (1) to (3);
\draw[-latex,thick] (1) to (4);
\draw[-latex,thick] (2) to (5);
\draw[-latex,thick] (2) to (6);

\node [left] at (2,1) {$0.5 $ };
\node [left] at (2,-1) {$0.5 $ };
\node [left] at (8,2.2) {$0.5 $ };
\node [left] at (8,0.65) {$0.5 $ };
\node [left] at (8,-0.65) {$0.5 $ };
\node [left] at (8,-2.2) {$0.5 $ };
\node [above] at (0) {$x_1$};
\node [below] at (2) { ${r}_{1,2}=[0.8, 0.2]$ };
\node [above] at (2) {$x_{2,2}$};
\node [below] at (1) { ${r}_{1,1}=[0, 0]$ };
\node [above] at (1) {$x_{2,1}$};
\node [right] at (6) { $r_{2,4}=[1, 0.6]$ };
\node [right] at (5) { $r_{2,3}=[0.4, 0.6]$ };
\node [right] at (4) { $r_{2,2}=[0.6, 0.8]$ };
\node [right] at (3) { $r_{2,1}=[0.6, 0.2]$ };
\end{tikzpicture}
\caption{Branching probability, realizations of two risky assets' return rates and predictable portfolios on the two-stage scenario tree}\label{fig-scenario-tree}
\end{figure}
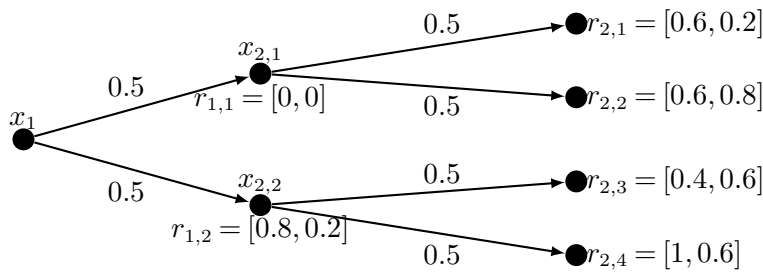

At the beginning of each stage, the investor may reallocate the wealth among the two risky assets. We assume that the portfolio at stage $t$ is $x_t=[x^1_t,x^2_t]$ with $x^1_t+x^2_t=1$, where $x^i_t, t=1, 2, i=1,2$ is the proportion of wealth invested in the $i$-th asset at stage $t$. The first stage portfolio is deterministic while the second stage portfolio is random and scenario dependent.

We assume that the DM is ambiguous about the true utility function
which lies in the ambiguity set:
$$
U=\{ u^1(y):=\min\{3y,0.5y+0.5\},\; u^2(y)=2y-y^2\},
$$
where $u^i(0)=0$, $u^i(1)=1$ for $i=1,2$.
It is easy to see that $u^i$ is
strictly increasing and concave over $[0,1]$
and $U$ is independent of state and stage.

\begin{figure}[h]
\begin{center}
\includegraphics[width=0.5\linewidth]{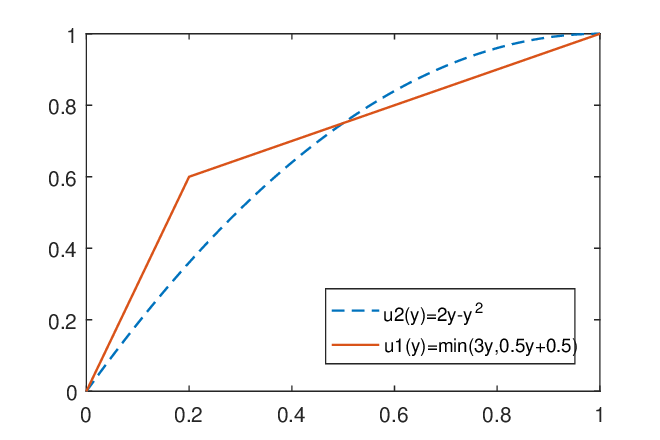}
\caption{\small Plot of $u^1(y)$ and $u^2(y)$.} \label{Figure-proset}
\end{center}
\end{figure}

We consider a simple two-stage portfolio selection problem under the state-independent preference robust expected utility model \eqref{eq-pro-intc}:
\begin{equation}
\label{eq-pro-example}
\begin{array}{cl}
 \max\limits_{x_{1},x_2(\cdot)} & \inf\limits_{{u}_1\in{U},u_2\in U}\mathbb{E}\left[u_1( x_{1}^\top {r}_1 )+
 \bbe\left[u_2( x_{2}^\top {r}_2 )|r_1\right] \right] \\
 {\rm s.t. }& e^\top x_{1}=1,\ x_1\in \mathbb{R}^2_+,
 \ e^\top x_{2}({r}_1)=1,\ x_2(\cdot)\in \mathcal{L}^0(\mathbb{R}^2_+),
\end{array}
\end{equation}
where $\mathcal{L}^0(\mathbb{R}^2_+)$ denotes
the space of measurable functions taking
finite values in $\mathbb{R}^2_+$,
and discuss how the worst-case utility function is identified at each 
investment stage.

\subsection{Non-rectangularity of the preference robust counterpart}
We begin by investigating rectangularity of the ambiguity set in problem \eqref{eq-pro-example}, which is essentially about the consistency between the global preference robust counterpart
\begin{equation}\label{eq-ex-glo}
\begin{array}{cl}
f^*(x) :=\inf\limits_{{u}_1\in{U},u_2\in U}\mathbb{E}\left[u_1( x_{1}^\top {r}_1 )+
 \bbe\left[u_2( x_{2}^\top {r}_2 )|{r}_1\right] \right]
\end{array}
\end{equation}
with global worst-case utility functions $u_1^*,u_2^*$ and the nested local preference robust counterpart
\begin{equation}\label{eq-ex-loc}
\begin{array}{cl}
\hat{f}^*(x) :=\inf\limits_{{u}_1\in{U}}\mathbb{E}\left[u_1( x_{1}^\top {r}_1 )+ \inf\limits_{u_2\in U}
 \bbe\left[u_2( x_{2}^\top {r}_2 )|{r}_1\right] \right],
\end{array}
\end{equation}
 with local worst-case utility functions $\hat{u}_1^*,\hat{u}_2^*(\cdot)$.
Note that in both problems \eqref{eq-ex-glo} and \eqref{eq-ex-loc},
the decision variables are fixed. Here we set
$x_1=[1,0]$, $x_{2,1}=[1,0]$ and
$x_{2,2}=[1,0]$
and demonstrate that the
worst-case utility functions of the two problems
are different at some state in the second stage.
Since both problems
are decomposable,
we may solve them by solving
$f^*_1=\inf\limits_{{u}_1\in{U}}\mathbb{E}\left[u_1( x_{1}^\top {r}_1 )\right] $,
$f^*_2=\inf\limits_{u_2\in U}\mathbb{E}\left[
 \bbe\left[u_2( x_{2}^\top {r}_2 )|r_1\right] \right] $,
$\hat{f}^*_2({r}_1)= \inf\limits_{u_2\in U}
 \bbe\left[u_2( x_{2}^\top {r}_2 )|{r}_1\right] $
and then setting
$f^*=f^*_1+f^*_2$ and
\begin{equation*}
\hat{f}^*=f^*_1+\bbe[\hat{f}^*_2({r}_1)].
\end{equation*}
For instance,
\bgeq
f^*_1 & =&\inf\limits_{{u}_1\in{U}}\frac{1}{2}\left[u_1( [1,0] \times [0,0]^\top )+u_1( [1,0] \times [0.8,0.2]^\top )\right]\\
&=&\inf\limits_{{u}_1\in\{u^{1},u^{2}\}}\frac{1}{2}\left[u_1( 0 )+u_1( 0.8 )\right]\\
& =& 0 + \min\{ \frac{1}{2}  \min\{3*0.8 ,0.5*0.8+0.5 \} , \frac{1}{2} (2*0.8-0.8^2 )  \}\\
&=&\min \{ 0.45,0.48 \}=0.45.
\edeq
The worst-case utility value is attained by $u^1(\cdot)$.
Likewise
\bgeq
f^*_2 &= & \inf\limits_{u_2\in U}\frac{1}{2}\Big[
\frac{1}{2} \left[u_2( [1,0]\times [0.6,0.2]^\top ) + u_2( [1,0]\times [0.6,0.8]^\top ) \right]\\
&& + \frac{1}{2} \left[u_2( [1,0]\times [0.4,0.6]^\top ) + u_2( [1,0]\times [1,0.6]^\top ) \right] \Big]\\
&=& \inf\limits_{u_2\in \{u^{1},u^{2}\}}\frac{1}{2}\left[
\frac{1}{2} \left[u_2( 0.6 ) + u_2( 0.6 ) \right] +\frac{1}{2} \left[u_2( 0.4 ) + u_2( 1 ) \right] \right]\\
&=& \min\left\{\frac{1}{2}(0.8+0.85),\frac{1}{2}(0.84+0.82)\right\} =\min\{0.825,0.83\}=0.825.
\edeq
The worst-case utility value is attained by $u^1(\cdot)$.
Summing them up, we obtain
$f^*= f^*_1+ f^*_2=1.275$.
The analysis is depicted at the left-hand side of
Figure \ref{fig:rectangular} where ``PLU''  denotes
the piecewise linear utility function.
We now move on to calculate $\hat{f}^*$.
\bgeq
\hat{f}^*_2({r}_{1,1}) &=& \inf\limits_{u_2\in U}
\frac{1}{2} \left[u_2( [1,0]\times [0.6,0.2]^\top ) + u_2( [1,0]\times [0.6,0.8]^\top ) \right]\\
&=& \inf\limits_{u_2\in \{u^{1},u^{2}\}} \frac{1}{2} \left[u_2( 0.6 ) + u_2( 0.6 ) \right]\\
&=& \min\{0.8,0.84\}=0.8.
\edeq
The worst-case utility (locally) at the second-stage is
attained by $u^1(\cdot)$
in the first node at stage 1. 
\bgeq
\hat{f}^*_2({r}_{1,2}) &=& \inf\limits_{u_2\in U}
 \frac{1}{2} \left[u_2( [1,0]\times [0.4,0.6]^\top ) + u_2( [1,0]\times [1,0.6]^\top ) \right]\\
&=& \inf\limits_{u_2\in \{u^{1},u^{2}\}} \frac{1}{2} \left[u_2( 0.4 ) + u_2( 1 ) \right]\\
&=&\min\{0.85,0.82\}=0.82.
\edeq
 The worst-case utility (locally)
 is attained by  $u^2(\cdot)$ in the second node at stage 1. 
Consequently
$$
\hat{f}^*_2= \frac{1}{2}(0.8+0.82)=0.81<0.825=f^*_2.
$$

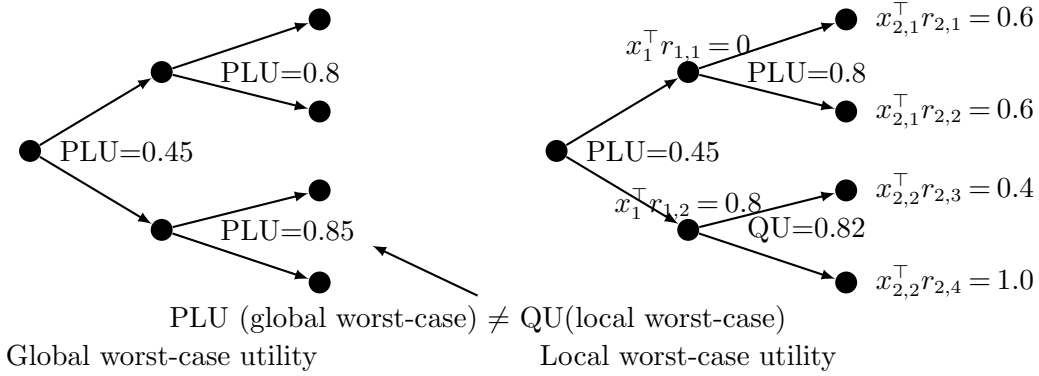
\begin{figure}[h]
\centering
\begin{tikzpicture}[scale=0.7]
\node [fill=black,circle,,scale=0.8] (0) at (-0.5,0){};
\node [fill=black,circle,,scale=0.8] (1) at (2,1.5) {};
\node [fill=black,circle,,scale=0.8] (2) at (2,-1.5) {};
\node [fill=black,circle,,scale=0.8] (3) at (5, 2.5) {};
\node [fill=black,circle,,scale=0.8] (4) at (5, 0.75) {};
\node [fill=black,circle,,scale=0.8] (5) at (5, -0.75) {};
\node [fill=black,circle,,scale=0.8] (6) at (5, -2.5) {};
\draw[-latex, thick] (0) to (1);
\draw[-latex,thick] (0) to (2);
\draw[-latex,thick] (1) to (3);
\draw[-latex,thick] (1) to (4);
\draw[-latex,thick] (2) to (5);
\draw[-latex,thick] (2) to (6);
\node (9) at (8,-3.2) {\color{black}{PLU (global worst-case)
$\neq$
QU(local worst-case)}};
\draw[-latex,thick,black] (8,-2.75) to (6,-1.8);

\node [right] at (0) {$\ \ $PLU=0.45};
\node [right] at (1) {$\ \quad\ $PLU=0.8};
\node [right] at (2) {$\ \quad\ $PLU=0.85};

\node [fill=black,circle,,scale=0.8] (10) at (10-0.5,0){};
\node [fill=black,circle,,scale=0.8] (11) at (10+2,1.5) {};
\node [fill=black,circle,,scale=0.8] (12) at (10+2,-1.5) {};
\node [fill=black,circle,,scale=0.8] (13) at (10+5, 2.5) {};
\node [fill=black,circle,,scale=0.8] (14) at (10+5, 0.75) {};
\node [fill=black,circle,,scale=0.8] (15) at (10+5, -0.75) {};
\node [fill=black,circle,,scale=0.8] (16) at (10+5, -2.5) {};
\draw[-latex, thick] (10) to (11);
\draw[-latex,thick] (10) to (12);
\draw[-latex,thick] (11) to (13);
\draw[-latex,thick] (11) to (14);
\draw[-latex,thick] (12) to (15);
\draw[-latex,thick] (12) to (16);

\node [above] at (11) {$x_{1}^\top {r}_{1,1}=0$ };
\node [above] at (12) {$x_{1}^\top {r}_{1,2}=0.8$ };
\node [right]  at (13) {$\ \ x_{2,1}^\top {r}_{2,1}=0.6 $};
\node [right]  at (14) {$\ \ x_{2,1}^\top {r}_{2,2}=0.6 $};
\node [right]  at (15) {$\ \ x_{2,2}^\top {r}_{2,3}=0.4 $};
\node [right]  at (16) {$\ \ x_{2,2}^\top {r}_{2,4}=1.0 $};
\node [right] at (10) {$\ \ $PLU=0.45};
\node [right] at (11) {$\ \quad\ $PLU=0.8};
\node [right] at (12) {$\ \quad\ $QU=0.82};
\node [below] at (2,-3.5) {Global worst-case utility};
\node [below] at (12,-3.5) {Local worst-case utility};
\end{tikzpicture}
\caption{
Worst-case utilities on a two-stage scenario tree.
}
\label{fig:rectangular}
\end{figure}

From the analysis above, we can see that the DM would
adopt a quadratic utility (QU) function $u^2(\cdot)$ which is more risk-averse
after she/he has earned some money (second node at stage 1)
but would take a piecewise linear utility function $u^1(\cdot)$
after she/he has failed to earn anything (first node at stage 1).
The analysis is depicted at the right-hand side of Figure \ref{fig:rectangular}.
The overall worst-case expected utility
value in the two stages
is $ \hat{f}^* = f^*_1 +\hat{f}^*_2 = f^*_1 + \frac{1}{2}(\hat{f}^*_{2,1}+\hat{f}^*_{2,2}) = 1.26$.

Summarizing the calculations of both problems  \eqref{eq-ex-glo} and \eqref{eq-ex-loc}, we conclude that
$
\hat{f}^* = 1.26<1.275=f^*.
$
This is because  model \eqref{eq-ex-glo}
chooses the worst-case utility function independent of scenarios
in the second stage
whereas model \eqref{eq-ex-loc} chooses the worst-case utility
function after observing the scenarios and hence is more conservative.
The underlying reason is that the utility functions in the ambiguity
set are state-independent.

\subsection{Time inconsistency of the preference robust optimization model}

We now turn to discuss time consistency of the preference robust optimization problem \eqref{eq-pro-example}. Let
\begin{equation}
\label{eq-ex-tc-glo}
\{x_{1}^*,x_2^*(\cdot)\}=\mathop{\arg\max}\limits_{x_{1}\in X_1,x_2(\cdot)\in X_2}\inf\limits_{{u}_1\in{U},u_2\in U}\mathbb{E}\left[u_1( x_{1}^\top {r}_1 )+
 \bbe\left[u_2( x_{2}^\top {r}_2 )|r_1\right] \right]
\end{equation}
and
\begin{equation}\label{eq-ex-tc-loc}
\begin{array}{cl}
\hat{x}_2^*(\cdot)=\mathop{\arg\max}\limits_{x_2\in X_2}\inf\limits_{u_2\in U}
 \bbe\left[u_2( x_{2}^\top {r}_2 )|r_1\right],
\end{array}
\end{equation}
where $X_1=\{ x_1\in \mathbb{R}^2_+ | x^1_{1}+x^2_1=1 \}$,
$X_2= \{ x_2(\cdot)\in \mathcal{L}^0(\mathbb{R}^2_+) | x^1_{2}(r_1)+x^2_2(r_1)=1\}$.
We want to show that $x_2^*(\cdot)\neq \hat{x}_2^*(\cdot)$.
Observe that due to the decomposable structure of problem \eqref{eq-ex-tc-glo},
\begin{equation}
\label{eq-ex-tc-glo-b}
\{x_{1}^*,x_2^*(\cdot)\}=
\left\{ \mathop{\arg\max}\limits_{x_{1}\in X_1}\inf\limits_{{u}_1\in{U}}\mathbb{E}\left[u_1( x_{1}^\top {r}_1 )\right],
  \mathop{\arg\max}\limits_{x_2(\cdot)\in X_2}\inf\limits_{u_2\in U} \bbe\left[\bbe\left[u_2( x_{2}^\top {r}_2 )|r_1\right]\right] \right\}.
\end{equation}

For the first-stage optimization problem in \eqref{eq-ex-tc-glo-b}, the optimal portfolio is always ${x}_1^*=[1,0]$
as $r^1\geq r^2$ in both scenarios and the utility function does not
affect the optimal choice, given that the utility function is increasing.
To identify the worst-case utility,  let us compare
the optimal values $u_1( x_{1}^\top {r}_1 )$ under the two utility functions.
It is easy to obtain that
the optimal value is $0.48$ under the quadratic utility function
and $0.45$ under the piecewise linear utility function.
Thus the worst-case utility function $u^*_1$
at the first-stage is $u^1(\cdot)$.

Let us now look at the second-stage optimization problem in \eqref{eq-ex-tc-glo-b}.
By the finiteness of the preference robust set $U$ and the scenario tree structure of the random return ${r}$,
the second-stage optimization problem in \eqref{eq-ex-tc-glo-b} can be formulated as
\begin{equation}\label{eq-ex-tc-r1}
\begin{array}{ccl}
v^*_2&=\max\limits_{z,x_2(\cdot)\in X_2} & z\\
&{\rm s.t. } & z\leq \bbe\left[\bbe\left[ \min\{ 3 x_{2}^\top {r}_2 , 0.5x_{2}^\top {r}_2+0.5\}  |r_1\right]\right], \\
&& z\leq \bbe\left[\bbe\left[ 2x_{2}^\top {r}_2 - (x_{2}^\top {r}_2)^2 \}  |r_1\right]\right]\\
&=\max\limits_{x_2,y,z} & z\\
&{\rm s.t. } & z\leq \frac{1}{4}\sum_{i=1}^4 y_i, \\
&& y_i \leq  3 x_{2,1}^\top {r}_{2,i} ,\ i=1,2,\\
&& y_i \leq 0.5 x_{2,1}^\top {r}_{2,i}  + 0.5,\ i=1,2,\\
&& y_i \leq  3 x_{2,2}^\top {r}_{2,i} ,\ i=3,4,\\
&& y_i \leq 0.5 x_{2,2}^\top {r}_{2,i}  + 0.5,\ i=3,4,\\
&& z\leq  \frac{1}{4}\sum_{i=1}^2[ 2 x_{2,1}^\top {r}_{2,i} - ( x_{2,1}^\top {r}_{2,i})^2 ]
+ \frac{1}{4}\sum_{i=3}^4[ 2 x_{2,2}^\top {r}_{2,i} - ( x_{2,2}^\top {r}_{2,i})^2 ],\\
&& z\in \mathbb{R},y\in \mathbb{R}^4,x_{2}\in \mathbb{R}_+^{2\times 2}, x^1_{2,i}+x^2_{2,i}=1,\ i=1,2,\\
\end{array}
\end{equation}
by adding some auxiliary variables.
Problem \eqref{eq-ex-tc-r1} is a convex quadratic constrained quadratic programming problem which can be solved efficiently by CVX in Matlab.

Alternatively, we can solve \eqref{eq-ex-tc-r1} in a closed-form.
As the return rates in all scenarios are larger than 0.2,
thus $3 x_{2}^\top {r}_2 \geq 0.5x_{2}^\top {r}_2+0.5$
in all scenarios.
Then we can reformulate
\eqref{eq-ex-tc-r1} as
\begin{small}
\begin{equation*}
\begin{array}{cl}
v^*_2=\max\limits_{x_{2,1},x_{2,2}} &\min \Bigg\{\frac{1}{2}\bigg( 0.5 x_{2,1}^\top \left[\begin{array}{c} 0.6 \\ 0.5 \end{array}\right] +0.5 +  0.5 x_{2,2}^\top \left[\begin{array}{c} 0.7 \\ 0.6 \end{array}\right] +0.5 \bigg) ,\\
& \frac{1}{2}\Bigg( \frac{1}{2}\bigg( 2 x_{2,1}^\top \left[\begin{array}{c} 0.6 \\ 0.2 \end{array}\right] -\left(x_{2,1}^\top \left[\begin{array}{c} 0.6 \\ 0.2 \end{array}\right]\right)^2\bigg) + \frac{1}{2}\bigg( 2 x_{2,1}^\top \left[\begin{array}{c} 0.6 \\ 0.8 \end{array}\right] -\left(x_{2,1}^\top \left[\begin{array}{c} 0.6 \\ 0.8 \end{array}\right]\right)^2 \bigg) \Bigg) \\
& +\frac{1}{2}\Bigg( \frac{1}{2}\bigg( 2 x_{2,2}^\top \left[\begin{array}{c} 0.4\\ 0.6 \end{array}\right] -\left(x_{2,2}^\top \left[\begin{array}{c} 0.4 \\ 0.6 \end{array}\right]\right)^2\bigg) + \frac{1}{2}\bigg( 2 x_{2,2}^\top \left[\begin{array}{c} 1 \\ 0.6 \end{array}\right] -\left(x_{2,2}^\top \left[\begin{array}{c} 1 \\ 0.6 \end{array}\right]\right)^2 \bigg) \Bigg) \Bigg\} \\
=\max\limits_{x_{2,1},x_{2,2}} &\min \bigg\{\frac{1}{4}\Big( x_{2,1}^\top \left[\begin{array}{c} 0.6 \\ 0.5 \end{array}\right] +  x_{2,2}^\top \left[\begin{array}{c} 0.7 \\ 0.6 \end{array}\right] \Big) + 0.5,\\
& x_{2,1}^\top \left[\begin{array}{c} 0.6 \\ 0.5 \end{array}\right] - x_{2,1}^\top \left[\begin{array}{cc} 0.18 &  0.15 \\ 0.15 & 0.17   \end{array}\right] x_{2,1}  +
x_{2,2}^\top \left[\begin{array}{c} 0.7 \\ 0.6 \end{array}\right] - x_{2,2}^\top \left[\begin{array}{cc}  0.29 & 0.21\\ 0.21  & 0.18   \end{array}\right]  x_{2,1} \bigg\} \\
{\rm s.t. } & x^1_{2,i}+x^2_{2,i}=1,\ x_{2,i}\in [0,1]^2,\ i=1,2.
\end{array}
\end{equation*}
\end{small}
By eliminating
variables $x^2_{2,i}$, $i=1,2$, we can obtain a reduced maximin problem
\begin{small}
\begin{equation*}
\begin{array}{cl}
v^*_2=\max\limits_{x^1_{2,1},x^1_{2,2}} &\min \Big\{ 0.025(x^1_{2,1}+x^1_{2,2})+0.775,  - 0.05(x^1_{2,1})^2 + 0.14 x^1_{2,1}  - 0.05(x^1_{2,2})^2 + 0.04x^1_{2,2} +0.75 \Big\}\\
{\rm s.t. } & x^1_{2,1}\in [0,1], x^1_{2,2}\in [0,1],
\end{array}
\end{equation*}
\end{small}
where
$x^2_{2,i}=1-x^1_{2,i}$, $i=1,2$.
This is a maximization problem with a piecewise quadratic objective function. The optimum is attained
potentially at two sets of points:
the global maximizers of each piece,
and the set of points where the two pieces intersect, that is,
$$
v^*_2= \max\{ \min\{ v^*_{linear},v^*_{quad}\}, v^*_{int}  \},
$$
where
\begin{equation}\label{ex-tc-final-linear}
\begin{array}{ccl}
v^*_{linear}=&\max\limits_{x^1_{2,1},x^1_{2,2}} & 0.025(x^1_{2,1}+x^1_{2,2})+0.775,  \\
& {\rm s.t.} & x^1_{2,1}\in [0,1], x^1_{2,2}\in [0,1],
\end{array}
\end{equation}
\begin{equation}\label{ex-tc-final-quad}
\begin{array}{ccl}
v^*_{quad}= & \max\limits_{x^1_{2,1},x^1_{2,2}} & - 0.05(x^1_{2,1})^2 + 0.14 x^1_{2,1}  - 0.05(x^1_{2,2})^2 + 0.04x^1_{2,2} +0.75 \\\
& {\rm s.t.} &  x^1_{2,1}\in [0,1], x^1_{2,2}\in [0,1],
\end{array}
\end{equation}
and
\bgeqn
v^*_{int} = \max\limits_{x^1_{2,1},x^1_{2,2}} &&  0.025(x^1_{2,1}+x^1_{2,2})+0.775 \nonumber\\
 {\rm s.t. } &&  0.025(x^1_{2,1}+x^1_{2,2})+0.775 \nonumber\\
 &&\quad =  - 0.05(x^1_{2,1})^2 + 0.14 x^1_{2,1}  - 0.05(x^1_{2,2})^2 + 0.04x^1_{2,2} +0.75,\nonumber\\
&& x^1_{2,1}\in [0,1], x^1_{2,2}\in [0,1].
\label{ex-tc-final-int}
\edeqn
Problem \eqref{ex-tc-final-linear}
achieves its maximum
at the boundary $x^1_{2,1}=1$,  $x^1_{2,2}=1$ with $v^*_{linear}=0.825$.
Problem \eqref{ex-tc-final-quad}
achieves its maximum at the boundary of $x^1_{2,1}=1$ and stationary point of $x^1_{2,2}=0.4$
with $v^*_{quad}=0.848$.
Problem \eqref{ex-tc-final-int}
attains the maximum at the intersection point
$x^1_{2,1}=0.8$,  $x^1_{2,2}=1$
with $v^*_{int}=0.82$.
Thus $v^*_2= \max\{ \min\{ v^*_{linear},v^*_{quad}\}, v^*_{int}  \}=0.825$ with the
optimal solution $x^*_{2,1}=[1,0]$,  $x^*_{2,2}=[1,0]$.
The PRO model has a piecewise linear worst-case
utility function at its optimum $u^*_2$.
The analysis is depicted at the left-hand side of Figure \ref{fig:tc}.



We now turn to discuss solution of PRO problem \eqref{eq-ex-tc-loc}.
Suppose that at the beginning of the second-stage, the DM
can predict different scenarios
that would occur at the end of the second stage. Then the DM may consider the sub-PRO problem \eqref{eq-ex-tc-loc} at the second stage, which may have different optimal solutions and
corresponding worst-case utility functions at the two different nodes.

As there are two nodes
at the end of the first stage,
we have to solve the two sub-optimization problems conditional on the historical information on the two nodes at stage 1,
i.e.,
\begin{small}
\begin{equation}\label{ex-eq-tc-opt-1}
\begin{array}{cl}
&\hat{v}_2^*(r_{1,1})\\
=&\max\limits_{x_2\in X_2}\inf\limits_{u_2\in U}
 \bbe\left[u_2( x_{2}^\top {r}_2 )|r_{11}\right]\\
=& \max\limits_{
[x_{2,1},x_{2,2}]\in X_2 }
\min \bigg\{ 0.5 x_{2,1}^\top \left[\begin{array}{c} 0.6 \\ 0.5 \end{array}\right] +0.5,
2 x_{2,1}^\top \left[\begin{array}{c} 0.6 \\ 0.5 \end{array}\right]
- \frac{1}{2}\left(x_{2,1}^\top \left[\begin{array}{c} 0.6 \\ 0.2 \end{array}\right]\right)^2
-\frac{1}{2}\left(x_{2,1}^\top \left[\begin{array}{c} 0.6 \\ 0.8 \end{array}\right]\right)^2 \bigg\} \\
=&  \max\limits_{x^1_{2,1}\in [0,1]} \min \Big\{ 0.05 x^1_{2,1} +0.75,  - 0.1(x^1_{2,1})^2 + 0.28 x^1_{2,1}  +0.66 \Big\},
\end{array}
\end{equation}
\end{small}
where $x_{2,1}^2=1-x_{2,1}^1$
and
\begin{small}
\begin{equation}\label{ex-eq-tc-opt-2}
\begin{array}{cl}
&\hat{v}_2^*(r_{1,2})\\
=&\max\limits_{x_2\in X_2}\inf\limits_{u_2\in U}
 \bbe\left[u_2( x_{2}^\top {r}_2 )|r_{12}\right]\\
=&  \max\limits_{[x_{2,1},x_{2,2}]\in X_2} \min
\Big\{ 0.5 x_{2,2}^\top \left[\begin{array}{c} 0.7 \\ 0.6 \end{array}\right] +0.5,
2x_{2,2}^\top \left[\begin{array}{c} 0.7 \\ 0.6 \end{array}\right]
- \frac{1}{2}\left(x_{2,2}^\top \left[\begin{array}{c} 0.4 \\ 0.6 \end{array}\right]\right)^2
-\frac{1}{2}\left(x_{2,2}^\top \left[\begin{array}{c} 1 \\ 0.6 \end{array}\right]\right)^2  \Big\} \\
=&\max\limits_{x^1_{2,2}\in [0,1]} \min \Big\{ 0.05 x^1_{2,2} + 0.8,  - 0.1(x^1_{2,2})^2 + 0.08x^1_{2,2} +0.84 \Big\}.
\end{array}
\end{equation}
\end{small}
Problem \eqref{ex-eq-tc-opt-1} has an optimal solution $\hat{x}_{2,1}^*=[1,0]$
with $\hat{v}_2^*(r_{1,1})=0.8$.
The corresponding worst-case utility $u^*_{2,1}$ is $u^1(\cdot)$.
Problem \eqref{ex-eq-tc-opt-2} has an optimal solution
$\hat{x}_{2,2}^*=[0.8,0.2]$
with $\hat{v}_2^*(r_{1,2})=0.84$.
At the optimum, the expected utility values of $u^1(\cdot)$ and
$u^2(\cdot)$ are the same.
The analysis is depicted at the right-hand side of Figure \ref{fig:tc}.


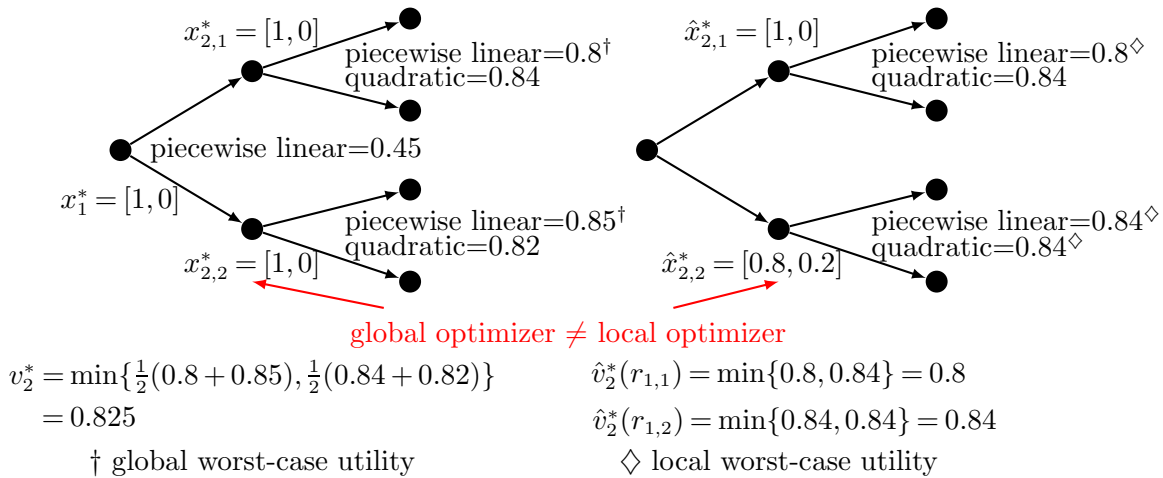
\begin{figure}[h]
\centering
\begin{tikzpicture}[scale=0.7]
\node [fill=black,circle,,scale=0.8] (0) at (-0.5,0){};
\node [fill=black,circle,,scale=0.8] (1) at (2,1.5) {};
\node [fill=black,circle,,scale=0.8] (2) at (2,-1.5) {};
\node [fill=black,circle,,scale=0.8] (3) at (5, 2.5) {};
\node [fill=black,circle,,scale=0.8] (4) at (5, 0.75) {};
\node [fill=black,circle,,scale=0.8] (5) at (5, -0.75) {};
\node [fill=black,circle,,scale=0.8] (6) at (5, -2.5) {};
\draw[-latex, thick] (0) to (1);
\draw[-latex,thick] (0) to (2);
\draw[-latex,thick] (1) to (3);
\draw[-latex,thick] (1) to (4);
\draw[-latex,thick] (2) to (5);
\draw[-latex,thick] (2) to (6);
\node (9) at (8,-3.5) {\color{red}{global optimizer $\neq$ local optimizer }};
\draw[-latex,thick,red] (4.5,-3) to (2,-2.5);
\draw[-latex,thick,red] (10,-3) to (12,-2.5);

\node [below] at (-0.5,-0.5) {$x^*_1=[1,0]$};
\node [above] at (2,1.7) {$x^*_{2,1}=[1,0]$ };
\node [below] at (2,-1.7) {$x^*_{2,2}=[1,0]$ };
\node [right] at (0) {$\ \ $piecewise linear=0.45};
\node [right] at (3.2,1.85) {$\ \ $piecewise linear=0.8$^\dagger$};
\node [right] at (3.2,1.35) {$\ \ $quadratic=0.84};

\node [right] at (3.2,-1.35) {$\ \ $piecewise linear=0.85$^\dagger$};
\node [right] at (3.2,-1.85) {$\ \ $quadratic=0.82};

\node [fill=black,circle,,scale=0.8] (10) at (10-0.5,0){};
\node [fill=black,circle,,scale=0.8] (11) at (10+2,1.5) {};
\node [fill=black,circle,,scale=0.8] (12) at (10+2,-1.5) {};
\node [fill=black,circle,,scale=0.8] (13) at (10+5, 2.5) {};
\node [fill=black,circle,,scale=0.8] (14) at (10+5, 0.75) {};
\node [fill=black,circle,,scale=0.8] (15) at (10+5, -0.75) {};
\node [fill=black,circle,,scale=0.8] (16) at (10+5, -2.5) {};
\draw[-latex, thick] (10) to (11);
\draw[-latex,thick] (10) to (12);
\draw[-latex,thick] (11) to (13);
\draw[-latex,thick] (11) to (14);
\draw[-latex,thick] (12) to (15);
\draw[-latex,thick] (12) to (16);

\node [above] at (11.5,1.7) {$\hat{x}^*_{2,1}=[1,0]$  };
\node [below] at (11.5,-1.7) {$\hat{x}^*_{2,2}=[0.8,0.2]$  };
\node [right] at (13.2,1.85) {$\ \ $piecewise linear=0.8$^\diamondsuit$};
\node [right] at (13.2,1.35) {$\ \ $quadratic=0.84};
\node [right] at (13.2,-1.35) {$\ \ $piecewise linear=0.84$^\diamondsuit$};
\node [right] at (13.2,-1.85) {$\ \ $quadratic=0.84$^\diamondsuit$};

\node [below] at (2,-3.8) {
$v_2^*=\min\{\frac{1}{2}(0.8+0.85),\frac{1}{2}(0.84+0.82)\}$};
\node [below] at (-1.1,-4.7) {
$=0.825$};
\node [below] at (12,-3.8) {$\hat{v}_2^*(r_{1,1})=\min\{0.8,0.84\}=0.8$};
\node [below] at (12.3,-4.7) { $\hat{v}_2^*(r_{1,2})=\min\{0.84,0.84\}=0.84$};
\node [below] at (2,-5.5) {$\dagger$ global worst-case utility};
\node [below] at (12,-5.5) {$\diamondsuit$ local worst-case utility};

\end{tikzpicture}
\caption{
Left: optimal solutions of (\ref{eq-ex-tc-glo}). \quad
Right:  optimal solutions of (\ref{eq-ex-tc-loc}).
}
\label{fig:tc}
\end{figure}

By comparing the solutions shown in Figure \ref{fig:tc}, we can see that the global optimal solution at the left-hand side and the local optimal solution at the right-hand side are not the same. Thus, the optimal solution of the state-independent PRO model \eqref{eq-pro-intc} is not time
consistent.
This is because in model (\ref{eq-ex-tc-glo}) the worst-case utility $u_{2,1}^*$ must be equal to $u^*_{2,2}$ regardless of the reward at the end of stage one. In contrast, model (\ref{eq-ex-tc-loc}) allows one to choose worst-case utility $u_{2,1}^*$ or $u_{2,2}^*$ after viewing the outcome of reward at the end of stage one.
The fundamental reason is that the worst-case utilities in sub-horizon
model \eqref{eq-pro-intc-t}
are scenario ($\mathcal{F}_{t-1}$-adapted $\xi_{[t-1]}$) dependent
whereas the worst-case utilities in \eqref{eq-pro-intc}
are all deterministic (independent of the stochastic process $\{\xi_t\}$).

\section{Reformulations of the multistage PRO models under a scenario tree structure}\label{sec-app-3-tree}

\subsection{Time-consistent model with pairwise comparison-based ambiguity set}\label{sec-sce-pariwise}


If 
the state-dependent pairwise comparisons ambiguity set $\mathcal{U}^P_t(\xi_{[t-1]})$ defined in Section 4.1 is adopted, we can apply the tractable
reformulation of the one-stage PRO model with pairwise comparison proposed in \cite{AmD15-EC} to each non-leaf node of problem \eqref{eq-mpro-scenario} and get the following reformulation of the time consistent model.

\begin{proposition}\label{prop-pairwise-tree}
Given the scenario tree structure of $\{\xi_{t}\}$ and a series of pairwise comparisons ambiguity sets $\mathcal{U}^{P}(s)=\mathcal{U}^{P}_{t(s)}(\xi[s])$,
problem \eqref{eq-mpro-scenario} is equivalent to
\begin{eqnarray}
&\max &  \sum_{s\in S^-}p_s \bigg( \theta_{N-1}(s)+\sum_{i\in s^+}\mu_{i,N} -L(s) \sum_{j=1}^{N-1} \eta_{j}(s)
+ \sum_{k=1}^{K}  z_k(s)\left(\mathbb{P}\left[Y_{k}={y}_{N}\right] -  \mathbb{P}\left[W_{k}={y}_{N}\right] \right) \lambda_k(s) \bigg) \nonumber\\ &
\inmat{s.t.} & \sum_{j=1}^{N} {y}_{j} \mu_{i, j} \leq \frac{p_i}{p_{i^-}}
 h_{t(i)}(x(i^-), \xi(i)),\ i\in S\setminus\{1\}, \nonumber\\ &
& \frac{p_{i}}{p_{i^-}}-\sum_{j=1}^{N} \mu_{i,j}=0,\ i\in S\setminus\{1\},\nonumber\\ &
& \theta_{j-1}(s) {y}_{j-1}-\theta_{j-1}(s) {y}_{j}+v_{j-2}(s)\left({y}_{j-1}-{y}_{j-2}\right)+\eta_{j-1}(s)
\geq 0,\ j=3, \cdots, N-1,\ s\in S^-,\nonumber\\ &
& \theta_{1}(s) {y}_{1}-\theta_{1}(s) {y}_{2}
+\eta_{1}(s) 
\geq 0,\ s\in S^-,\nonumber\\ &
&\theta_{N-1}(s) {y}_{N-1}-\theta_{N-1}(s) {y}_{N}+v_{N-2}(s)\left({y}_{N-1}-{y}_{N-2}\right)
+\eta_{N-1}(s) 
\geq 0, \ s\in S^-, \label{eq-sce-reformu2}\\ &
&\theta_{j-1}(s)-\theta_{j}(s)+\sum_{i\in s^+} \mu_{i,j}-v_{j-1}(s)+v_{j}(s) \nonumber\\ &
&\qquad + \sum_{k=1}^{K}  z_k(s)\left(\mathbb{P}\left[Y_{k}={y}_{j}\right] -  \mathbb{P}\left[W_{k}={y}_{j}\right] \right)\lambda_k(s) =0,\ j=2, \cdots, N-2,\ s\in S^-, \nonumber\\ &
&\theta_{N-2}(s)-\theta_{N-1}(s)+\sum_{i\in s^+} \mu_{i,N-1}-v_{N-2}(s) \nonumber\\ &
&\qquad + \sum_{k=1}^{K}  z_k(s)\left(\mathbb{P}\left[Y_{k}={y}_{N-1}\right] -  \mathbb{P}\left[W_{k}={y}_{N-1}\right] \right)\lambda_k(s) =0,\ s\in S^-, \nonumber\\ &
&x({1}) \in \mathscr{X}_{1}, x(s) \in \mathscr{X}_{t(s)}\left(x[{s^-}],\xi[{s}]\right),\ s\in S^-\setminus\{1\},\nonumber\\ &
&\theta(s) \in \mathbb{R}^{N-1}, v(s) \in \mathbb{R}_+^{N-2}, \eta(s) \in \mathbb{R}_+^{N-1},
\lambda(s)\in \mathbb{R}_+^{K},\ s\in S^-,  
\mu(s) \in \mathbb{R}_+^{N}, s\in S\setminus\{1\}.\nonumber
\end{eqnarray}
\end{proposition}

Given the concavity of $h_{t}\left(\cdot, \cdot\right)$, $t=1,\ldots,T$, and the
convexity of $\mathscr{X}_{1}(\cdot)$,  $\mathscr{X}_{t}\left(\cdot, \cdot\right)$, $t=2,\ldots,T$,
problem \eqref{eq-sce-reformu2} is a convex programming problem.

{\it Proof of Proposition \ref{prop-pairwise-tree}: }
Analogous to the proof of Theorem 1 in \cite{AmD15-EC},
we can show that the worst-case utility function
is in a piecewise linear form with at most $N$ breakpoints.
Let $S(s)=|s^{+}|$, here $s^+$ stands for the set of all son nodes of $s$.
By taking the piecewise linear form in the functional infimum problem, we have
\begin{eqnarray*}
&\inf\limits_{u_{s} \in \mathcal{U}^P(s)}& \sum\limits_{i \in s^{+}} \frac{p_{i}}{p_{s}} u_{s}\left(h_{t(i)}\left(x\left(s\right), \xi(i)\right)\right)\\
=& \inf\limits_{\alpha, \beta,\varepsilon, \varphi} & \sum_{i=1}^{S(s)} \mathbb{P}(\xi_t=
{\xi(i)}
|\xi_{[t-1]})
\left(\varepsilon_{i} h_{t(i)}\left(x(s)
, \xi(i)
+ \varphi_{i} \right) \right)\\
& \inmat{s.t.}  & z_k(s)\sum_{j=2}^{N} \mathbb{P}\left[W_{k}={y}_{j}\right] \alpha_{j} \geq z_k(s)\sum_{j=2}^{N} \mathbb{P}\left[Y_{k}={y}_{j}\right] \alpha_{j},\ k=1, \ldots, K, \\
&& \inmat{constraints} \; \eqref{eq-vt2-concave-upper}-\eqref{eq-vt2-norm}.
\end{eqnarray*}
The only difference between the studied model and the model in Theorem 1 of \cite{AmD15-EC} is that, we replace the normalization constraint in \cite{AmD15-EC} by bounded support constraints.

Taking the duality to the minimization LP problem gives an equivalent
maximization LP reformulation.
Applying the maximization LP reformulation to each inner infimum problem at node $s$ in \eqref{eq-mpro-scenario}, we obtain the deterministic reformulation of \eqref{eq-mpro-scenario}.
\hfill $\Box$

\subsection{Time-consistent model with Kantorovich ball based ambiguity set}

If 
the state-dependent Kantorovich ball-based ambiguity set studied in Section \ref{sec-tractable-sddp} is adopted,
we can 
apply the tractable reformulation of the dynamic programming equation obtained in Theorem~\ref{theorem-dp-sddp} 
to the scenario tree recursively.

\begin{proposition}\label{theorem-scenario-tree-ball}
Given the scenario tree structure of $\{\xi_{t}\}$ and
a series of Kantorovich ball based ambiguity sets $\mathcal{U}^{K}(s)=\mathcal{U}^{K}_{t(s)}(\xi[s])$ on each node $s\in S^-$ of the scenario tree, program \eqref{eq-mpro-scenario}
can be reformulated as
\begin{eqnarray}
&\max &  \sum_{s\in S^-}p_s \bigg( \theta_{N-1}(s)+\sum_{i\in s^+}\mu_{i,N} -L(s) \sum_{j=1}^{N-1} \eta_{j}(s) 
-\sum_{j=2}^N \tilde{\beta}_j(s) w_j(s) -r(s)\varsigma(s) \bigg) \nonumber\\ &
\inmat{s.t.} & \sum_{j=1}^{N} {y}_{j} \mu_{i, j} \leq \frac{p_i}{p_{i^-}}
 h_{t(i)}(x(i^-), \xi(i)),\ i\in S\setminus\{1\}, \nonumber\\ &
& \frac{p_{i}}{p_{i^-}}-\sum_{j=1}^{N} \mu_{i,j}=0,\ i\in S\setminus\{1\},\nonumber\\ &
& \theta_{j-1}(s) {y}_{j-1}-\theta_{j-1}(s) {y}_{j}+v_{j-2}(s)\left({y}_{j-1}-{y}_{j-2}\right)+
w_j(s) +\eta_{j-1}(s)
\geq 0,\ j=3, \cdots, N-1,\ s\in S^-,\nonumber\\ &
& \theta_{1}(s) {y}_{1}-\theta_{1}(s) {y}_{2}+
w_2(s)+\eta_{1}(s)
\geq 0,\ s\in S^-,\nonumber\\ &
&\theta_{N-1}(s) {y}_{N-1}-\theta_{N-1}(s) {y}_{N}+v_{N-2}(s)\left({y}_{N-1}-{y}_{N-2}\right)+
w_N(s)+\eta_{N-1}(s) 
\geq 0, \ s\in S^-, \nonumber\\ &
&\theta_{j-1}(s)-\theta_{j}(s)+\sum_{i\in s^+} \mu_{i,j}-v_{j-1}(s)+v_{j}(s)=0,\ j=2, \cdots, N-2,\ s\in S^-, \nonumber\\ &
&\theta_{N-2}(s)-\theta_{N-1}(s)+\sum_{i\in s^+} \mu_{i,N-1}-v_{N-2}(s)=0,\ s\in S^-, \nonumber\\ &
&  w_j(s)\leq z_{j-1}(s)(y_{j}- y_{j-1})+\frac{1}{2}(y_{j}- y_{j-1})^2 \varsigma(s) , \ j=2,\cdots,N,\ s\in S^-, \nonumber\\ &
&-w_j(s)\leq -z_{j-1}(s)(y_{j}- y_{j-1})+\frac{1}{2}(y_{j}- y_{j-1})^2 \varsigma(s),\ j=2,\cdots,N,\ s\in S^-, \label{reformu-sce-kantorovich}\\ &
&  w_j(s)\leq z_{j}(s)(y_{j}- y_{j-1})+\frac{1}{2}(y_{j}- y_{j-1})^2 \varsigma(s) ,\  j=2,\cdots,N,\ s\in S^-,\nonumber\\ &
&-w_j(s)\leq -z_{j}(s)(y_{j}- y_{j-1})+\frac{1}{2}(y_{j}- y_{j-1})^2 \varsigma(s),\ j=2,\cdots,N,\ s\in S^-, \nonumber\\ &
&x({1}) \in \mathscr{X}_{1}, x(s) \in \mathscr{X}_{t(s)}\left(x[{s^-}],\xi[{s}]\right),\ s\in S^-\setminus\{1\},\nonumber\\ &
&\theta(s) \in \mathbb{R}^{N-1},\ v(s) \in \mathbb{R}_+^{N-2},\ \eta(s) \in \mathbb{R}_+^{N-1},\
s\in S^-,  \nonumber\\ &
&\varsigma(s)\in \mathbb{R}_+,\ w(s)\in \mathbb{R}^{N-1},\ z(s)\in \mathbb{R}^{N},\ s\in S^-,\ \mu(s) \in \mathbb{R}_+^{N},\ s\in S\setminus\{1\}.\nonumber
\end{eqnarray}
\end{proposition}

Under the concavity of $h_{t}\left(\cdot, \cdot\right)$, $t=1,\ldots,T$, and the
convexity of $\mathscr{X}_{1}(\cdot)$,  $\mathscr{X}_{t}\left(\cdot, \cdot\right)$, $t=2,\ldots,T$,
problem \eqref{reformu-sce-kantorovich} is a convex programming problem.


{\it Proof of Proposition \ref{theorem-scenario-tree-ball}: }
There are two ways to obtain the reformulation.
One is to use the duality technique in the proof
of Theorem~\ref{theorem-dp-sddp}, on the basis of the finite expansion reformulation \eqref{eq-mpro-scenario},
to reformulate the inner minimization
over $\mathcal{U}(s)$ as the sum of $|S^-|$ maximization problems.
The other is to use Theorem \ref{theorem-dp-sddp} recursively from the last stage to the first stage, and then we derive the desired conclusion.
\hfill $\Box$

{
\subsection{Time-inconsistent model with pairwise comparison-based ambiguity set }

If we consider the state-independent pairwise comparison based ambiguity set ${U}^{P}_{t}:=\mathcal{U}^{P}_{t}(\xi_0)$ which is fixed
at each stage $t=1,\ldots,T$, we can apply the tractable
reformulation of the one-stage PRO model to each stage of problem \eqref{eq-pro-intc} and get the following reformulation.

\begin{proposition}[Pairwise comparison based ambiguity set]
\label{theorem-scenario-tree-pairwise}
Given the scenario tree structure of $\{\xi_{t}\}$ and $T$ pairwise comparison based state-independent ambiguity sets ${U}^{P}_{t}:=\mathcal{U}^{P}_{t}(\xi_0)$ at each stage $t=1,\ldots,T$, program \eqref{eq-pro-intc}
can be reformulated as
\begin{eqnarray*}
&\max &  \sum_{t=1}^{T} \bigg( \theta_{N-1}(t)+\sum_{i\in S(t)}\mu_{i,N} -L(t) \sum_{j=1}^{N-1} \eta_{j}(t)
+ \sum_{k=1}^{K}  z_k(t)\left(\mathbb{P}\left[Y_{k}={y}_{N}\right] -  \mathbb{P}\left[W_{k}={y}_{N}\right] \right) \lambda_k(t) \bigg) \nonumber\\ &
\inmat{s.t.} & \sum_{j=1}^{N} {y}_{j} \mu_{i, j} \leq {p_i}
 h_{t(i)}(x(i^-), \xi(i)),\ i\in S\setminus\{1\}, \nonumber\\ &
& {p_{i}}-\sum_{j=1}^{N} \mu_{i,j}=0,\ i\in S\setminus\{1\},\nonumber\\ &
& \theta_{j-1}(t) {y}_{j-1}-\theta_{j-1}(t) {y}_{j}+v_{j-2}(t)\left({y}_{j-1}-{y}_{j-2}\right)+\eta_{j-1}(t)
\geq 0,\ j=3, \cdots, N-1,\ t=1,\ldots,T,\nonumber\\ &
& \theta_{1}(t) {y}_{1}-\theta_{1}(t) {y}_{2}
+\eta_{1}(t) \geq 0,\ t=1,\ldots,T,\nonumber\\ &
&\theta_{N-1}(t) {y}_{N-1}-\theta_{N-1}(t) {y}_{N}+v_{N-2}(t)\left({y}_{N-1}-{y}_{N-2}\right)
+\eta_{N-1}(t) \geq 0, \ t=1,\ldots,T, \nonumber\\ &
&\theta_{j-1}(t)-\theta_{j}(t)+\sum_{
i\in S(t)} \mu_{i,j}-v_{j-1}(t)+v_{j}(t) \nonumber\\ &
&\qquad + \sum_{k=1}^{K}  z_k(t)\left(\mathbb{P}\left[Y_{k}={y}_{j}\right] -  \mathbb{P}\left[W_{k}={y}_{j}\right] \right)\lambda_k(t) =0,\ j=2, \cdots, N-2,\ t=1,\ldots,T, \nonumber\\ &
&\theta_{N-2}(t)-\theta_{N-1}(t)+\sum_{
i\in S(t)} \mu_{i,N-1}-v_{N-2}(t) \nonumber\\ &
&\qquad + \sum_{k=1}^{K}  z_k(t)\left(\mathbb{P}\left[Y_{k}={y}_{N-1}\right] -  \mathbb{P}\left[W_{k}={y}_{N-1}\right] \right)\lambda_k(t) =0,\ t=1,\ldots,T, \nonumber\\ &
&x({1}) \in \mathscr{X}_{1}, x(s) \in \mathscr{X}_{t(s)}\left(x[{s^-}],\xi[{s}]\right),\ s\in S^-\setminus\{1\},\nonumber\\ &
&\theta(t) \in \mathbb{R}^{N-1}, v(t) \in \mathbb{R}_+^{N-2}, \eta(t) \in \mathbb{R}_+^{N-1},
\lambda(t)\in \mathbb{R}_+^{K},\ t=1,\ldots,T,\
\mu(s) \in \mathbb{R}_+^{N}, s\in S\setminus\{1\}.
\end{eqnarray*}
\end{proposition}

The
main difference from \eqref{eq-sce-reformu2} is that, the slack variables $\theta(t)$, $v(t)$, $\eta(t)$ and $\lambda(t)$, $t=1,\ldots,T$, are stage-dependent in the time inconsistent model as they are added to determine the worst-case state-independent  utilities.
In contrast, the slack variables $\theta(s)$, $v(s)$, $\eta(s)$ and $\lambda(s)$, $s=1,\ldots,S^-$, are node-dependent in the time consistent model as they are added at each non-leaf node to determine the worst-case state-dependent utilities.
The convexity of the reformulation follows by the concavity of $h_{t}\left(\cdot, \cdot\right)$, $t=1,\ldots,T$, and the
convexity of $\mathscr{X}_{1}(\cdot)$,  $\mathscr{X}_{t}\left(\cdot, \cdot\right)$, $t=2,\ldots,T$.

\subsection{Time-inconsistent model with Kantorovich ball-based ambiguity set }

Finally, we apply the state-independent Kantorovich ball-based ambiguity set ${U}^{K}_{t}:=\mathcal{U}^{K}_{t}(\xi_0)$ to each stage $t=1,\ldots,T$ of problem \eqref{eq-pro-intc} and
obtain the following reformulation.


\begin{proposition}[Kantorovich ball based ambiguity set]
\label{theorem-scenario-tree-ball-sid}
Given the scenario tree structure of $\{\xi_{t}\}$ and $T$ Kantorovich ball based  state-independent ambiguity sets ${U}^{K}_{t}:=\mathcal{U}^{K}_{t}(\xi_0)$ at each stage $t=1,\ldots,T$, program \eqref{eq-pro-intc}
can be reformulated as
\begin{eqnarray*}
&\max &  \sum_{t=1}^{T}\bigg( \theta_{N-1}(t)+\sum_{i\in S(t)}\mu_{i,N} -L(t) \sum_{j=1}^{N-1} \eta_{j}(t)
-\sum_{j=2}^N \tilde{\beta}_j(t) w_j(t) -r(t)\varsigma(t) \bigg) \nonumber\\ &
\inmat{s.t.} & \sum_{j=1}^{N} {y}_{j} \mu_{i, j} \leq {p_i}
 h_{t(i)}(x(i^-), \xi(i)),\ i\in S\setminus\{1\}, \nonumber\\ &
& {p_{i}}-\sum_{j=1}^{N} \mu_{i,j}=0,\ i\in S\setminus\{1\},\nonumber\\ &
& \theta_{j-1}(t) {y}_{j-1}-\theta_{j-1}(t) {y}_{j}+v_{j-2}(t)\left({y}_{j-1}-{y}_{j-2}\right)+
w_j(t) +\eta_{j-1}(t)
\geq 0,\ j=3, \cdots, N-1,\ t=1,\ldots,T,\nonumber\\ &
& \theta_{1}(t) {y}_{1}-\theta_{1}(t) {y}_{2}+
w_2(t)+\eta_{1}(t)
\geq 0,\ t=1,\ldots,T,\nonumber\\ &
&\theta_{N-1}(t) {y}_{N-1}-\theta_{N-1}(t) {y}_{N}+v_{N-2}(t)\left({y}_{N-1}-{y}_{N-2}\right)+
w_N(t)+\eta_{N-1}(t)
\geq 0, \ t=1,\ldots,T, \nonumber\\ &
&\theta_{j-1}(t)-\theta_{j}(t)+\sum_{i\in s^+} \mu_{i,j}-v_{j-1}(t)+v_{j}(t)=0,\ j=2, \cdots, N-2,\ t=1,\ldots,T, \nonumber\\ &
&\theta_{N-2}(t)-\theta_{N-1}(t)+\sum_{i\in s^+} \mu_{i,N-1}-v_{N-2}(t)=0,\ t=1,\ldots,T, \nonumber\\ &
&  w_j(t)\leq z_{j-1}(t)(y_{j}- y_{j-1})+\frac{1}{2}(y_{j}- y_{j-1})^2 \varsigma(t) , \ j=2,\cdots,N,\ t=1,\ldots,T, \nonumber\\ &
&-w_j(t)\leq -z_{j-1}(t)(y_{j}- y_{j-1})+\frac{1}{2}(y_{j}- y_{j-1})^2 \varsigma(t),\ j=2,\cdots,N,\ t=1,\ldots,T, \nonumber\\ &
&  w_j(t)\leq z_{j}(t)(y_{j}- y_{j-1})+\frac{1}{2}(y_{j}- y_{j-1})^2 \varsigma(t) ,\  j=2,\cdots,N,\ t=1,\ldots,T,\nonumber\\ &
&-w_j(t)\leq -z_{j}(t)(y_{j}- y_{j-1})+\frac{1}{2}(y_{j}- y_{j-1})^2 \varsigma(t),\ j=2,\cdots,N,\ t=1,\ldots,T, \nonumber\\
&
&x({1}) \in \mathscr{X}_{1}, x(s) \in \mathscr{X}_{t(s)}\left(x[{s^-}],\xi[{s}]\right),\ s\in S^-\setminus\{1\},\nonumber\\ &
&\theta(t) \in \mathbb{R}^{N-1},\ v(t) \in \mathbb{R}_+^{N-2},\ \eta(t) \in \mathbb{R}_+^{N-1},\
t=1,\ldots,T,  \nonumber\\ &
&\varsigma(t)\in \mathbb{R}_+,\ w(t)\in \mathbb{R}^{N-1},\ z(t)\in \mathbb{R}^{N},\ t=1,\ldots,T,\ \mu(s) \in \mathbb{R}_+^{N},\ s\in S\setminus\{1\}.
\end{eqnarray*}
\end{proposition}
The convexity of the reformulation follows by the concavity of $h_{t}\left(\cdot, \cdot\right)$, $t=1,\ldots,T$, and the
convexity of $\mathscr{X}_{1}(\cdot)$,  $\mathscr{X}_{t}\left(\cdot, \cdot\right)$, $t=2,\ldots,T$.
}

{
\section{NBD 
algorithm and SDDP algorithm}\label{sec-dy-algo}

The scenario tree method 
is a generic solution approach which can handle nonlinear dependence structure between stages. However, it does not
exploit the dynamic programming 
structure
of the time consistent model
established in Theorem \ref{theorem-dp-reform}.
Here, we propose to
use the efficient DP-type methods
such as the NBD algorithm and SDDP algorithm to solve the state-dependent
multistage PRO models based on the recursive equations in Theorem \ref{theorem-dp-reform}.

\subsection{General principle of the
DP-type 
algorithm}



For simple problems with finite states and finite actions, we can apply tabular solution methods by maintaining a state/policy-to-value mapping table and updating it with value/policy iteration schemes. 

For problems with infinite actions (for instance, a polyhedral feasible set $\mathscr{X}$), a natural idea is to approximate the value function by a piecewise linear function,
and then use the optimal values obtained
from solving state-dependent problem in Theorem \ref{theorem-dp-sddp}
to update the approximation function.
This 
is known as the approximate dynamic programming approach.

For some particular problems, for instance in our MS-PRO problem, if the constraints at recourse stages have a linear block-diagonal  structure, i.e., only consecutive stages can be linked by linear constraints, meanwhile, the reward functions are linear, and we have applied the piecewise linear approximation to the value function, then the optimization problem \eqref{eq-mpro-tc-nomial-dy-PLA} in the dynamic programming equation is convex and thus strong duality holds. We can use the solution to its duality problem to generate some optimality cuts with tight approximation gap and good convergence property.
This is known the Benders' style algorithm.




\subsubsection*{\underline{Approximate the expected value operator.} }
To solve problem \eqref{eq:thm-recursive-formula},
we first need to estimate or approximate the expected value operator.
At stage $t$ and scenario $k$, we select $S_{t,k}$
samples of $\xi_t$, denoted by $\xi_t(s)$ with appearing probability $p_s$, $s=1,\ldots,S_{t,k}$.
If we have a scenario tree representation 
of $\xi_t$, we can use all the son nodes of $\xi^k_t$ as the samples, which is exact reformulation of \eqref{eq:app-forward}. This is known as the NBD algorithm.
When 
$\xi_t$ is continuously distributed or have a large number of realizations, we can draw finite i.i.d. samples instead to get a small approximation problem, which is known as the SDDP algorithm.
With the finite samples of $\xi_t$, we can reformulate/approximate the expected value operator, thus problem \eqref{eq:thm-recursive-formula} as
\bgeqn
\left. \begin{array}{ll}
\max _{x_{t}} & \inf _{u_{t} \in \mathcal{U}_{t}(\xi_{[t-1]}^k)}
\sum_{s=1}^{S_{t,k}}p_s
\left[u_{t}\left(h_{t}\left(x_{t}, \xi_{t}(s)\right)\right)+V^{i-1}_{t+1}\left(
x_{[t-1]}^k,x_{t}, \xi_{[t-1]}^{k},\xi_{t}\right)\right] \\
\text { s.t. } &
{W_{t-1}\left(\xi_{[t-1]}^{k}\right) x_{t}=b_{t-1}\left(\xi_{[t-1]}^{k}\right)-D_{t-1}\left(\xi_{[t-1]}^{k}\right) x_{t-1}^{ik}}.
\end{array}\right.
\label{eq:app-forward-finite}
\edeqn

\subsubsection*{\underline{Piecewise linear approximations} }
The
main challenge of a dynamic programming
type algorithm is to
find a good approximation to $V_{t+1}\left(x_{[t]}, \xi_{[t]}\right)$.
The Benders'
type algorithm
uses a piecewise linear approximation to $V_{t+1}\left(x_{[t]}, \xi_{[t]}\right)$,
denoted by
$V^{i-1}_{t+1}\left(x_{[t]}, \xi_{[t]}\right)$,
after $(i-1)$-th iteration/updating.
Under Assumption \ref{assum-linear-recourse}, the dynamic equation has a block diagonal structure.
Thus, the cost-to-go value function only depends on the current decision $x_t$
rather than
historical decisions $x_{[t-1]}$. Moreover, we consider finite scenarios for $\xi_{[t-1]}$ and finite samples for $\xi_t$,
which means that we can maintain a piecewise linear approximation function
in $x_t$ for each realization of
$\xi_{[t]}$.
Denote the approximation function by
$$
V_{t+1}(x_t;\xi_{[t]}):= \min_{r\in R^{i}(\xi_{[t-1]})}\left(
\beta_{t}^r(\xi_{[t]})^\top x_t + \alpha_{t+1}^r(\xi_{[t]})\right),
$$
where
$R^{i}(\xi_{[t]})$ is the index set of
linear pieces at the $i$-th iteration.
$\alpha_{t+1}^r(\xi_{[t]})$ is the intercept and $\beta_{t+1}^r(\xi_{[t]})$ is the slope of the $r$-th piece.

\subsubsection*{\underline{Forward pass}}
At the $i$-th iteration, with the approximation of the value function at the 
previous iteration, we solve approximately \eqref{eq:app-forward-finite} in sequence in some scenario to obtain some trial decision 
sequences.
Specifically,
for each scenario $k$ in
a selected scenario set $\mathcal{K}$,
we
solve the following problem
from stage $1$ to stage $T$,
\begin{align}
\max _{x_{t},\delta_{t+1}} & \left(\inf _{u_{t} \in \mathcal{U}_{t}(\xi_{[t-1]}^k) } \sum_{s=1}^{S_{t,k}}p_s
\left[u_{t}\left(h_{t}\left(x_{t}, \xi_{t}(s)\right)\right)\right] \right) + \sum_{s=1}^{S_{t,k}}p_s
\left[\delta_{t+1}^s\right] \nonumber\\
\text { s.t. } &
{W_{t-1}\left(\xi_{[t-1]}^{k}\right) x_{t}=b_{t-1}\left(\xi_{[t-1]}^{k}\right)-D_{t-1}\left(\xi_{[t-1]}^{k}\right) x_{t-1}^{ik}}:\ \pi_t^k \label{eq:app-forward}\\
&\delta_{t+1}^s\leq {\beta_{t+1}^r}(\xi_{[t-1]}^k,\xi_{t}(s))^\top x_t + {\alpha_{t+1}^r}(\xi_{[t-1]}^k,\xi_{t}(s)): \ \rho^{ikrs}_t, \
s=1,\ldots,S_{t,k},\
r\in R(\xi_{[t-1]}^k,\xi_t), \nonumber
\end{align}
for each realization of historical 
path $\xi_{[t-1]}^{k}$,
historical decisions
$x_{[t-1]}^k$ and $R(\xi_{[t-1]}^k,\xi_t) =R^{i-1}(\xi_{[t-1]}^k,\xi_t) $. We denote the optimal solution of \eqref{eq:app-forward}
by $x_{t}^k$, which is the trial decision at stage $t$.
Here $\pi_t^k$ is the optimal dual variable
of the dynamic balance equation and $\rho_t$ is the optimal dual variable 
of the linear cuts.

\subsubsection*{\underline{Kantorovich ball-based ambiguity set case.}}
Under the Kantorovich ball ambiguity set, we can apply Theorem \ref{theorem-dp-sddp} to get a reformulation of \eqref{eq:app-forward}:
\begin{eqnarray}
&\max & \quad \theta_{N-1}+\sum_{s=1}^{S_{t,k}}\left(\mu_{i,N}+
p_s
\delta_s \right)-L(\xi_{[t-1]}) \sum_{j=1}^{N-1} \eta_{j} \nonumber\\
&&\quad -\tilde{L}(\xi_{[t-1]}) \sum_{j=1}^{N-2}\left(\tau_{j}+\sigma_{j}\right)\left({y}_{j+2}-{y}_{j}\right)  -\sum_{j=2}^N \tilde{\beta}_j w_j - r_t(\xi_{[t-1]}) \varsigma \nonumber \\
&\text { s.t. } &
{ W_{t-1}\left(\xi_{[t-1]}^{k}\right) x_{t}=b_{t-1}\left(\xi_{[t-1]}^{k}\right)-D_{t-1}\left(\xi_{[t-1]}^{k}\right) x_{t-1}^{ik}}:\ \pi_t^{i,k} \label{eq:Benders'-kantorovich}\\
&&\delta_{t+1}^s\leq {\beta_{t+1}^r}(\xi_{[t-1]}^k,\xi_{t}(s))^\top x_t + {\alpha_{t+1}^r}(\xi_{[t-1]}^k,\xi_{t}(s)): \ \rho^{ikrs}_t, \
s=1,\ldots,S_{t,k},\
r\in R(\xi_{[t-1]}^k,\xi_t),  \nonumber\\
&& \sum_{j=1}^{N} {y}_{j} \mu_{i, j} \leq \mathbb{P}(\xi_t=\xi_t(s)|\xi_{[t-1]})
 h_t(x_{t}, \xi_{t}(s)),\ s=1,\ldots,S, \nonumber \\
&&  \eqref{eq:dp-kan-con2}-\eqref{eq:dp-kan-con-n} \nonumber
\end{eqnarray}


\subsubsection*{\underline{Backward pass.}}
With the trail decision sequence
$\{x_{t}^k\}$, we can solve the following sub-problem for all scenarios with the piecewise linear approximation $V^{i}_{t+1}$ updated backwardly in this loop,
 \bgeqn
 \label{eq:Benders'-backward} 
            \left. \begin{array}{ll}
\max _{x_{t}} & \inf _{u_{t} \in \mathcal{U}_{t}(\xi_{[t-1]}^k) } \mathbb{E}_{\mid F_{t-1}}\left[u_{t}\left(h_{t}\left(x_{t}, \xi_{t}\right)\right)+V^{i}_{t+1}\left(x_{[t]},\xi_{[t-1]}^{k}, \xi_{t}\right)\right] \\
\text { s.t. } & W_{t}\left(\xi_{[t-1]}^{k}\right) x_{t}=b_{t}\left(\xi_{[t-1]}^{k}\right)-D_{t-1}\left(\xi_{[t-1]}^{k}\right) x_{t-1}^{i k} 
\end{array}\right.
\edeqn
Similarly to the forward pass, we collect $S_{t,k}$ samples
as the set of all realizations of $\xi_{t}$ conditional on the realization of $\xi_{[t-1]}^k$.
Thus, we can have a similar linear programming reformulation of \eqref{eq:Benders'-backward} similar to \eqref{eq:Benders'-kantorovich}. The only difference is that we set $R(\xi_{[t-1]}^k,\xi_t)=R^{i}(\xi_{[t-1]}^k,\xi_t) $.

\subsubsection*{\underline{Generate new cuts.}}
We first store the optimal dual variables $\pi_t^{i,k}, \rho_t^{i k r s}, r \in R^{i}(\xi_{[t-1]}^k,\xi_t(s))$, $s=1,\ldots,S_{t,k}$, in \eqref{eq:Benders'-kantorovich} at stage $t$.
Then we create an optimality cut, indexed by $\hat{r} $, for $V_t(\cdot)$,  with
 \bgeqn
 \label{eq:Benders'-beta}
 \beta^{\hat{r}}_t(\xi^k_{[t-1]})= -(\pi_t^{ik})^\top D_{t-1}(\xi_{[t-1]}^k),
 \edeqn
 \bgeqn
 \label{eq:Benders'-alpha}
 \alpha^{\hat{r}}_t(\xi^k_{[t-1]})=(\pi_t^{ik})^\top b_{t}(\xi_{[t-1]}^k)+ \sum_{s=1}^{S_{t,k}}\sum_{r\in R^{i}(\xi_{[t-1]}^k,\xi_t(s))} a_{t+1}^r(\xi_{[t-1]}^k,\xi_t(s)) \rho_t^{i k r s}
           \edeqn
         by taking the optimal dual variables into the objective function of the dual problem of \eqref{eq:Benders'-kantorovich} with $R(\xi_{[t-1]}^k,\xi_t)=R^{i}(\xi_{[t-1]}^k,\xi_t) $.  Then we obtain the piecewise linear approximation $V_t^i(\cdot)$ at stage $t-1$ by adding the new cut into the index set $R^i(\xi_{[t-1]}^k):=R^{i-1}(\xi_{[t-1]}^k)\cup \{\hat{r}\} $.

Since the dual feasible set in \eqref{eq:Benders'-kantorovich} is defined by finitely many linear constraints for each scenario,
there exist only finitely many dual extreme points, which can be attained for the considered scenario. Hence, only finitely many
different cut coefficients can be generated which guarantees the convergence of the Benders' style algorithm \cite{Sha11-EC,FuR21-EC}.
Using backward recursion, in a similar way, cuts can be derived for any stage $t = T - 1, \ldots, 1$, using the already updated cut approximation $R(\xi_{[t-1]}^k,\xi_t)=R^{i}(\xi_{[t-1]}^k,\xi_t) $.

{
\subsection{
Algorithmic procedures
of the NBD algorithm} 

We give the detailed algorithmic procedures when the classical NBD algorithm is adopted
to solve problem  \eqref{eq-mpro-tc}.

\vspace{2mm}

\begin{breakablealgorithm}\label{algo:Benders'} \caption{\small Solving MS-PRO-SD 
by NBD
}
\begin{flushleft}
\textbf{Inputs:} 
Confidence level $\alpha \in(0,1)$,
maximum number of iterations
$N_{\max}$, tolerance $tol$.
\\  \textbf{Initialize:}
(a) Initialize the lower bound
$\underline{v}^0 :=-\infty$
and the
upper bound
$\overline{v}^0=\infty$,
(b) set the counter of iterations to $i \leftarrow 0$,
(c) set the scenario tree structure of $\{\xi_t\}$ (including the set of scenarios $\mathcal{K}$, the appearing probability $p_k$ of scenario $k$),
(d) set $V^0_t(\cdot,\xi_{[t]}) :=0$, for $t=2,\ldots,T+1$.
(e) set $R^0(\xi_{[t]})=\{r^0\}$ {with $\beta_t^{r^0}=0$ and a large enough $\alpha_t^{r^0}$ for all non-leaf nodes.}
\\  \textbf{Output:} $x^{ik}_{t}$, $k\in\mathcal{K}$, $t=1,\cdots,T$.
\\
\textbf{while} $i<N_{\max}$ \textbf{do}
\end{flushleft}
\begin{itemize}   \item
Set $i \leftarrow i+1$.  
\item Forward Pass
\begin{itemize}
    \item[] Solve the approximate first-stage problem
    $$\overline{v}^i: =\max _{x_{1} \in \mathscr{X}_{1}} \inf _{u_{1} \in {U}_{1}} \mathbb{E}_{\mid F_{0}}\left[u_{1}\left(h_{1}\left(x_{1}, \xi_{1}\right)\right)+V^{i-1}_{2}\left(x_{[1]}, \xi_{[1]}\right)\right]
    $$
        to obtain trial point $x_1^{i k}=x_1^i$ for all $k \in \mathcal{K}$
        (We store the optimal value of the first part of the objective function $\inf _{u_{1} \in {U}_{1}} \mathbb{E}_{\mid F_{0}}\left[u_{1}\left(h_{1}\left(x_{1}, \xi_{1}\right)\right)\right]$ as $\underline{v}_1^i$).

  \item[] \textbf{for} stages $t=2, \ldots, T$ \textbf{do}
   \begin{itemize}
       \item[]  \textbf{for} samples $k \in \mathcal{K}$ \textbf{do}
         \begin{itemize}
             \item[] Solve the approximate stage-$t$ subproblem \eqref{eq:app-forward} (a conditional one-stage PRO problem)
for $x_{t-1}^{i k}$ to obtain trial point $x_{t}^{i k}$. We store the optimal value of the first part of the objective function $\inf _{u_{t} \in \mathcal{U}_{t}(\xi_{[t-1]}^k)  } \mathbb{E}_{\mid F_{t-1}}\left[u_{t}\left(h_{t}\left(x_{t}, \xi_{t}\right)\right)\right]$ as $\underline{v}_t^{ik}$.
        \item[] At the final stage $T$, store the optimal dual values $\pi_{T-1}^{i}$
        , $\rho_{T-1}^{i} $.
         \end{itemize}
       \item[] \textbf{end for}
   \end{itemize}
         \item[]  \textbf{end for}

         \item[]
         Substitute $
         x_{t}^{i k},\ k\in \mathcal{K}$, into the objective function to derive a lower bound
         $\underline{v}^{i} = v_1^i + \sum_{t=2}^{T-1}\sum_{k\in\mathcal{K}} p_k v_t^{ik}$,
         where $p_k$ is the appearing probability of the $k$-th scenario.

\end{itemize}

\item Check the stopping criterion
   \begin{itemize}
       \item[] \textbf{if} $|\underline{v}^{i} - \overline{v}^{i} | \leq {\rm tol} $
         \begin{itemize}
         \item[] terminate loop and return output.
            \end{itemize}
       \item[] \textbf{end if}
   \end{itemize}

\item Backward Pass
\begin{itemize}
  \item[] Set $V^{i}_{T+1}\left(x_{[T]}, \xi_{[T]}\right):=0$

    \item[] \textbf{for} stages $t = T-1, \ldots , 2$ \textbf{do}
    \begin{itemize}

        \item[] \textbf{for} samples $k\in \mathcal{K}$ \textbf{do}
        \begin{itemize}
        \item[]- Load $Son(\xi_{[t-1]}^k)$, the set of all realizations of $\xi_{t}$ conditional on the realization of $\xi_{[t-1]}^k$, i.e., the set of all son nodes of the $\xi_{[t-1]}^k$, set $S_{t,k}$ as the number of elements in $Son(\xi_{[t-1]}^k)$.
                \item[]
           - Solve the updated approximate stage-$t$ subproblem
           \eqref{eq:Benders'-kantorovich} with $R(\xi_{[t-1]}^k,\xi_t)=R^{i}(\xi_{[t-1]}^k,\xi_t) $
          for $x_{t-1}^{i k}$.
         %
         Notice that for stage $T$, we do not 
         solve the optimization problem.
         We directly use
         { the dual variables
         recorded in solving \eqref{eq:app-forward}}
         in the forward pass 
         \item[]
           - Store the optimal dual values
           $\pi_t^{i,k}, \rho_t^{i k r s}, r \in R^{i}(\xi_{[t-1]}^k,\xi_t(s))$, $s=1,\ldots,S_{t,k}$
            \item[]
             - Create an optimality cut, indexed by $\hat{r} $, for $V_t(\cdot)$,  with
           \eqref{eq:Benders'-beta} and \eqref{eq:Benders'-alpha}
            \item[] - Update the cut approximation $V
            _t^i(\cdot)$ at stage $t-1$.
             $R^i(\xi_{[t-1]}^k):=R^{i-1}(\xi_{[t-1]}^k)\cup \{\hat{r}\} $

        \end{itemize}

        \item[] \textbf{end for}
    \end{itemize}

    \item[] \textbf{end for}
\end{itemize}

\end{itemize}
\begin{flushleft}
\textbf{end while}
\end{flushleft}
\label{alg_lirnn}  \end{breakablealgorithm}

\subsection{SDDP algorithm}
The SDDP algorithm
has the same
procedures as the NBD algorithm (Algorithm 2),
with
three exceptions.
First, the SDDP algorithm randomly selects a finite number of
scenarios, denoted by $N$,
to construct $\mathcal{K}$ at each loop rather than
use all scenarios
as in the NBD algorithm.
Moreover, at each node in both
the forward pass and the backward pass, the SDDP algorithm randomly generates $S$ samples to
compute the conditional expected value approximately,
rather than
use the realizations of all son nodes in the NBD algorithm.
Finally,
since the lower bound provided by the SDDP algorithm is not
exact but
relies on random sampling,
we usually set a stopping criterion 
based on
a confidence interval,
say,  tol=$z_{1-0.99/2} \frac{\sigma_{\underline{v}}}{ |\mathcal{K}|}$, where $\sigma_{\underline{v}}$ is the sample standard deviation of $\sum_{t=2}^{T-1} v_t^{ik}$, $k\in\mathcal{K}$ and $z_{1-0.99/2}$
is the $({1-0.99/2})$-quantile of the standard normal distribution $N(0,1)$, see \cite{FuR21-EC}.
To avoid
repeat, we do not give the complete algorithmic
procedures of the SDDP method.





}

{

\section{Details of preference elicitation and construction of the ambiguity sets}
\label{EC:Elicit-Ambiguity-consct}


We use pairwise comparison approach and scoring approach to elicit the investor(DM)'s
preference and use the elicited preference information to
construct the ambiguity sets.
The former is based on the
{\em random relative utility split scheme} (RRUS) which is widely used in the literature of
PRO models, see e.g.~\cite{AmD15-EC}.
The key idea of RRUS is to generate
a pair of lotteries and ask the investor  to select, we refer readers to \cite{AmD15-EC,GuX21-EC} for the detailed procedures.
The latter is to ask the investor to give scores at different levels of consumption and use them to construct an approximate
utility function. Here we use
RRUS to construct the pairwise comparison-based ambiguity sets and the scoring approach to 
construct
a nominal utility function and subsequently
the Kantorovich ball-based ambiguity sets.


 \tikzstyle{startstop} = [rectangle, rounded corners, minimum width=1cm, minimum height=1cm, text centered, text width=3cm, draw=black, fill=white]
 \tikzstyle{arrow} = [thick,->,>=stealth]

\tikzstyle{ifelse} = [diamond,
minimum width=0.2cm, minimum height=0.2cm, text centered, text width=2cm, draw=black, fill=white]
 \tikzstyle{arrow} = [thick,->,>=stealth]

\begin{figure}[!ht]
    \centering
	\scalebox{0.85}{
\begin{tikzpicture}[node distance=3cm]
\node (step1) [startstop] {$N$ historical consumption trajectories ($T$ stages) and $N\times T$ 
consumption-scores data-pair };
\node (step2) [startstop, right of=step1, xshift=1.5cm] {For each stage, sort out
$N$ consumption levels increasingly as breakpoints $x_i$ and collect scores as feedback utility value $\hat{u}_i$};
\node (step3) [startstop, right of=step2,  xshift=2cm, yshift=-1.7cm] {Determine nominal utility value $u_i$
via optimistic estimation
\eqref{eq-utility-opt}};
\node (step4) [startstop, above of=step3] {Determine nominal utility value $u_i$
via
pessimistic estimation
\eqref{eq-utility-per}};
\node (step3a) [startstop, above of=step4] {Determine nominal utility value $u_i$ as best-fit estimation by \eqref{eq-utility-unbiased}};
\node (step5) [startstop, below  of=step3] {Determine nominal utility value $u_i$
via unbiased estimation by \eqref{eq-utility-unbiased-new-ref} };
\node (step6) [startstop, right  of=step3,  xshift=2cm,yshift=1.7cm] {
Develop a piecewise linear 
utility and determine nominal slope $\tilde{\beta}_j(t)$ for MS-PRO-SID-Kan };
\draw [arrow] (step1) -- (step2);
\draw [arrow] (step2) -- (step3);
\draw [arrow] (step2) -- (step4);
\draw [arrow] (step2) -- (step5);
\draw [arrow] (step3) -- (step6);
\draw [arrow] (step4) -- (step6);
\draw [arrow] (step5) -- (step6);
\draw [arrow] (step2) -- (step3a);
\draw [arrow] (step3a) -- (step6);
\end{tikzpicture}
 }
\caption{
Flowcharts of procedures for constructing a nominal 
utility function in
MS-PRO-SID-Kan model.}
    \label{fig:preference-elicitation}

\end{figure}
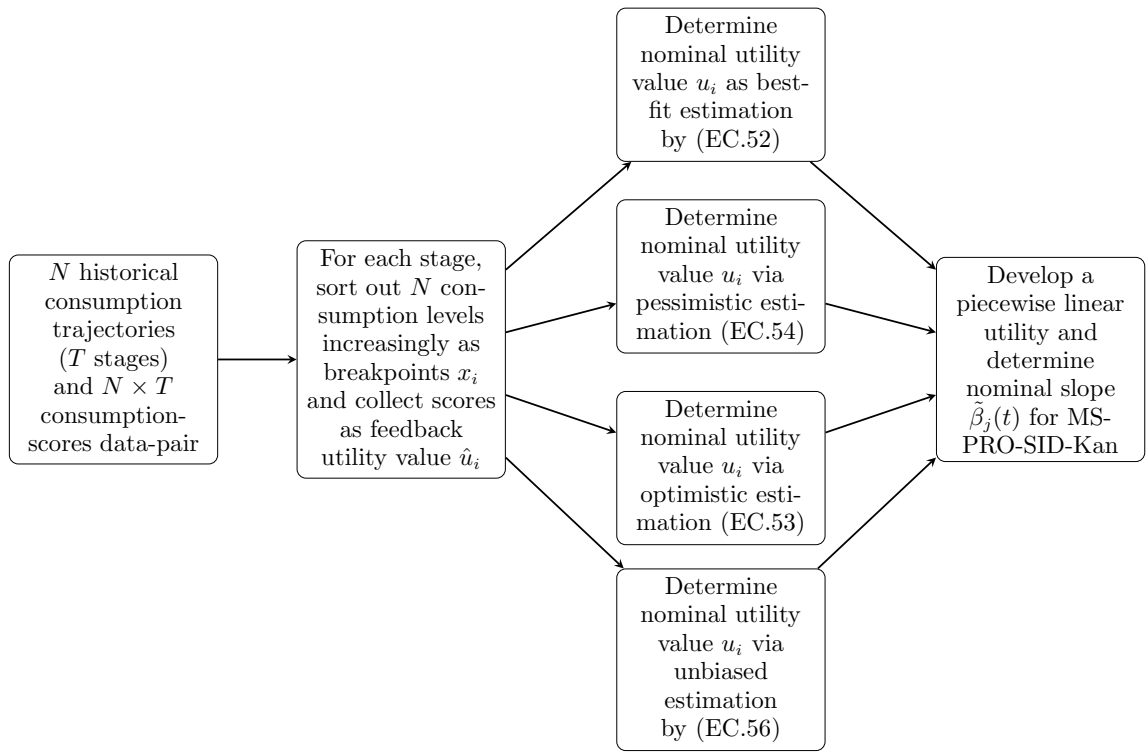


Specifically, we use a scoring and fitting method
to obtain a nominal utility function
based on the historical data related to
the investor.
Suppose that we have observed $N$ historical consumption trajectories (each of which consists of
$T$ stages) and  the investor's
 scores about
 the utility
 of the consumption at each stage for each trajectory. 
 This means that we have collected $N\times T$ 
pairs of consumption-score data.

\subsection{Estimating nominal utility
in state-independent case}

We consider $N$ consumption levels, denoted
by $x_i$, for $i=1,\cdots,N$ at each stage
of historical trajectory and let $\hat{u}_i$, $i=1,\ldots,N$ be the relevant utility scores
by the investor. By assorting them if necessary, we assume that the consumption
values are
in an increasing order.
A standard approach to
construct an approximate utility function is to
use a piecewise linear interpolation
passing through the observed data points (pairs of $(x_i,\hat{u}_i)$). However,
the
approximated piecewise linear
utility function constructed
as such is not necessarily
monotonically increasing
since  $\hat{u}_i$, $i=1,\ldots,N$  are not necessarily in an increasing order. This is primarily because  these scores might be obtained at different
states where the investor's risk preferences
are actually state-dependent. To tackle the issue, we propose four optimization-based models
to construct a piecewise linear nominal utility function.
The procedures are illustrated in the flowchart in Figure \ref{fig:preference-elicitation}.


\textbf{\underline{Best-fit estimation}.}
The first approach is to find a non-deceasing and concave function with least squares errors at the breakpoints from the scores by solving the following
minimization problem
\begin{subequations}\label{eq-utility-unbiased}
\begin{eqnarray}
   &
\min_u & \sum_{i=2}^{N-1} (u_i-\hat{u}_i)^2  \\
& {\rm s.t.} & (u_{i+1}-u_{i})/(x_{i+1}-x_{i}) \leq (u_{i}-u_{i-1})/(x_i-x_{i-1}) ,\ i=2,\ldots,N-1, \label{eq:utility-unbiased-1}\\
&&  0\leq (u_{N}-u_{N-1})/(x_N-x_{N-1}),  \label{eq:utility-unbiased-2}\\
&& u_1=0,\ u_N=1,\ 0\leq u_i \leq 1, \ i=2,\ldots,N-1.  \label{eq:utility-unbiased-3}
\end{eqnarray}
\end{subequations}
This approach is analogous
to the best-fit approach
in \cite{AmD15-EC}, the main difference
is that here $\hat{u}_i$, $i=1,\cdots,N$ are obtained from scoring.

\textbf{\underline{Optimistic estimation}.}
The second approach is to find the upper non-deceasing and concave envelope of the
graph of score points and construct a
(piecewise linear)
optimistic utility function
by solving the following minimization problem:
\begin{subequations}\label{eq-utility-opt}
\begin{eqnarray}
&  \min_u & \sum_{i=2}^{N-1}
(u_i-\hat{u}_i)^2\\
  & {\rm s.t.} & \eqref{eq:utility-unbiased-1}-\eqref{eq:utility-unbiased-3},\  \hat{u}_i \leq u_i, \ i=2,\ldots,N-1.
\end{eqnarray}
\end{subequations}
This estimation is optimistic because we consider the largest possible utility value of the utility function at each consumption level.
We denote the optimal value by $u^{U}=[u^U_1,\ldots,u^U_N]$, with slope $\beta_i^{U}$ between $u^U_{i-1}$ and $u^U_i$.

\textbf{\underline{Pessimistic estimation}.}
The third approach is to use the lower non-deceasing and concave envelope of
the graph of the
score points by solving
\begin{subequations}\label{eq-utility-per}
\begin{eqnarray}
&  \min\limits_u & \sum_{i=2}^{N-1}
(u_i-\hat{u}_i)^2\\
  & {\rm s.t.} & \eqref{eq:utility-unbiased-1}-\eqref{eq:utility-unbiased-3},\  u_i \leq \hat{u}_i, \ i=2,\ldots,N-1.
\end{eqnarray}
\end{subequations}
This estimation is pessimistic since we consider the smallest
possible utility value of the utility function at each consumption level.
We denote the optimal solution of $u$ as 
$u^{L}=[u^L_1,\ldots,u^L_N]$, with slope $\beta_i^{L}$ between $u^L_{i-1}$ and $u^L_i$.


To ensure the existence of pessimistic estimation
and
optimistic estimation, we assume that the scores are always larger than or equal to the consumption and smaller than or equal to $1$.

\textbf{\underline{Unbiased estimation}.}
{
Instead of considering
an optimistic or a pessimistic utility function based on the investor's historical utility scores, we may consider a utility
function which lies in the middle of the two under the Kantorovich distance. Specifically, we solve the following program:
\begin{subequations}\label{eq-utility-unbiased-new}
\begin{eqnarray}
   & \min\limits_{u,\beta}
& \max\{\dd_K(u,{u}^{U}),\dd_K(u,{u}^{L}) \}   \\
& \inmat{s.t.} &
\beta_i=  (u_{i}-u_{i-1})/(x_i-x_{i-1}) ,\ i=2,\ldots,N, \\
&& \beta_i \geq \beta_{i+1},\ i=1,\ldots,N=1,\ \beta_N\geq 0,\\
&& u_1=0,\ u_N=1,\ 0\leq u_i \leq 1, \ i=2,\ldots,N-1.
\end{eqnarray}
\end{subequations}
By 
utilizing the dual form of the Kantorovich distance in \eqref{eq:Kant-u-v-LP-dual}, we can
reformulate problem \eqref{eq-utility-unbiased-new} as the following linear programming problem:
\begin{subequations}\label{eq-utility-unbiased-new-ref}
\begin{align}
\min\limits_{u,\beta,\zeta,\lambda,\atop \mu,\rho,\phi}\
& \zeta   \\
 \inmat{s.t.}\ \  & \zeta \geq \frac{1}{2}\sum_{i=2}^N  (\lambda^k_i + \mu^k_i + \rho^k_i + \phi^k_i)(x_{i}- x_{i-1})^2,\ j=1,2,
\\
& {\beta}_i -\beta_i^{U} + \lambda^1_i-\mu^1_i + \rho^1_i - \phi^1_i =0,\  i=2,\cdots,N, \\
& {\beta}_i -\beta_i^{L} + \lambda^2_i-\mu^2_i + \rho^2_i - \phi^2_i =0,\  i=2,\cdots,N, \\
& (\mu^k_{2}-\lambda^k_{2})(x_{2}-x_1) =0,\ k=1,2, \\
& (\mu^k_{i+1}-\lambda^k_{i+1})(x_{i+1}-x_i) + (\rho^k_{i}-\phi^k_{i})(x_{i}-x_{i-1})=0, k=1,2, i=2,\cdots,N-1, \\
& (\rho^k_{N}-\phi^k_{N})(x_{N}-x_{N-1})   =0, \\
& \mu^k_i,\lambda^k_i,\rho^k_i,\phi^k_i \geq 0,\ k=1,2,\ i=2,\cdots,N.\\
&
\beta_i=  (u_{i}-u_{i-1})/(x_i-x_{i-1}) ,\ i=2,\ldots,N, \label{eq:utility-unbiased-new-1}\\
& \beta_i \geq \beta_{i+1},\ i=1,\ldots,N=1,\ \beta_N\geq 0,\\
& u_1=0,\ u_N=1,\ 0\leq u_i \leq 1, \ i=2,\ldots,N-1.
\end{align}
\end{subequations}
The optimal value of the program corresponds
to the radius of the Kantorovich ball.
The components $(\beta_i, u_i)$, $i=1,\ldots,N$, of the optimal solution of the above program
 are used to construct a piecewise linear nominal utility function.


\textbf{\underline{Illustrating examples}.}
We give two simple examples to show the piecewise linear nominal utility functions obtained through the four
approaches outlined above.
Figure~\ref{fig-score} depicts these functions.
20 data points are used in Figure~\ref{fig-score},
and 40 are used
in Figure~\ref{fig-score2}.
In both cases, consumption-utility score pairs are randomly generated with the true utility of the investor. We can see that the one based on unbiased estimation is much better than the one via the best-fit approach.
As for the setting of the  radius of the Kantorovich ball,
we can base on subjective judgement
or rely on \eqref{eq-utility-unbiased-new}.


\begin{figure}[ht]
	\centering
		\begin{minipage}[t]{0.44\textwidth}
		\centering
    \includegraphics[width=7.5cm]{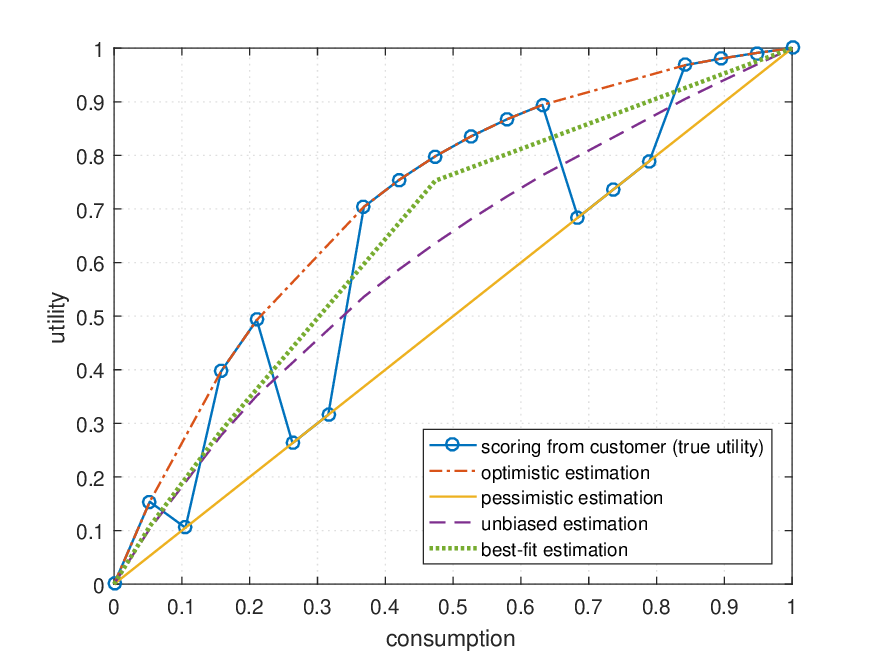}
\caption{
Using 20 utility scores (empty circles) at stage 1 (extracted
from 20 historical trajectories) to construct
four approximate
 state-independent nominal utility functions.
\label{fig-score}}
	\end{minipage}
\quad
 \begin{minipage}[t]{0.44\textwidth}
		\centering
	  \includegraphics[width=7.5cm]{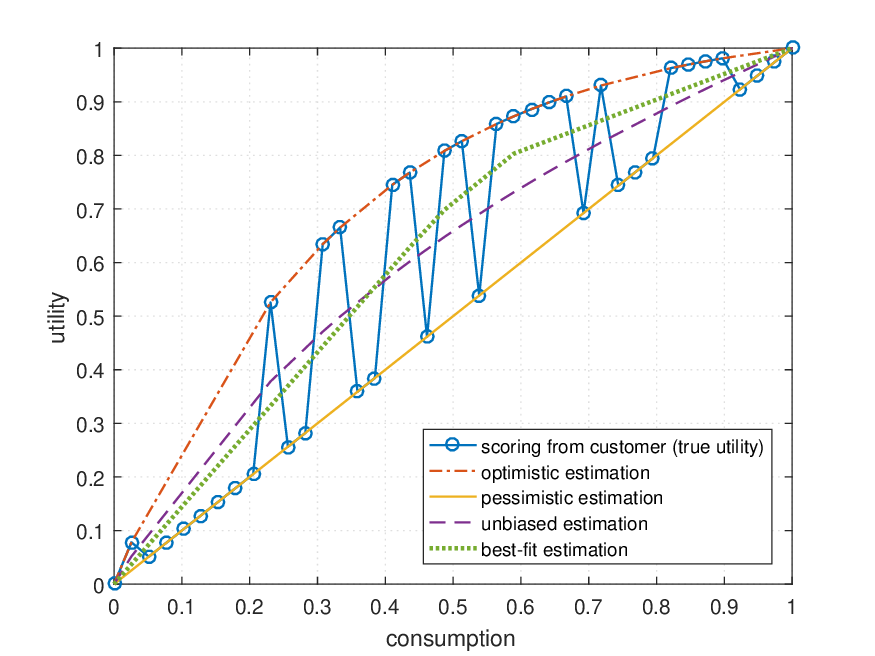}
\caption{
Using 40 utility scores at stage 1 (extracted
from 40 historical trajectories) to construct
four approximate
 state-independent nominal utility functions.
\label{fig-score2}}
	\end{minipage}
\end{figure}

}



\subsection{{Estimating state-dependent Kantorovich ball}}

We now turn to
discuss  construction of
the nominal utility function
of the Kantorovich ball
when the investor's
utility is state-dependent.
We consider
two cases depending on
whether the state information of the historical trajectories are observable or not by the modeller.

\textbf{\underline{States are  known}.}
If the modeller has complete information
about states,
then we can
separate the $N$ data points (consumption levels and scores) at each stage
in different groups, corresponding to different states (in our case study, the two states correspond to the high oil price state and the low oil price state).
 We
 apply
 one of the four approaches (best-fit for instance)
 in different 
 states respectively,
 and derive the state-dependent nominal utility functions accordingly.
 The procedures are
 explained in the flowchart
  in Figure \ref{fig:preference-elicitation-sd}.

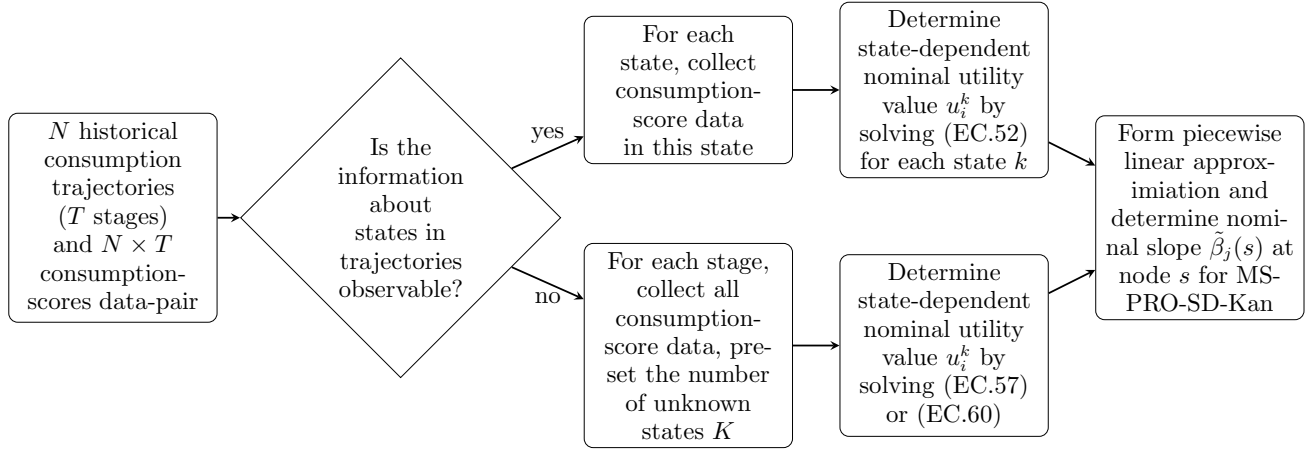
\begin{figure}[!ht]
    \centering
	\scalebox{0.85}{
\begin{tikzpicture}[node distance=3cm]
\node (step1) [startstop] {$N$ historical consumption trajectories ($T$ stages) and $N\times T$ 
consumption-scores data-pair };

\node (step1a) [ifelse, right  of=step1,  xshift=1.5cm] {Is the information about states in trajectories observable?};

\node (step2) [startstop, right  of=step1a,  xshift=1.5cm, yshift=2cm] {For each state, collect consumption-score data in this state};
\node (step2b) [startstop, right  of=step1a,  xshift=1.5cm, yshift=-2cm] {For each stage, collect all consumption-score data, pre-set the number of
unknown
states $K$};

\node (step3) [startstop, right of=step2,  xshift=1cm] {Determine state-dependent nominal utility value $u^k_i$ by solving \eqref{eq-utility-unbiased} for each state $k$};
\node (step3b) [startstop, right of=step2b,  xshift=1cm] {Determine state-dependent nominal utility value $u^k_i$ by solving \eqref{eq-utility-unbiased-sd} or \eqref{eq-utility-true_unbiased-sd-form}};
\node (step6) [startstop, right  of=step3,  xshift=1cm, yshift=-2cm] {Form piecewise linear approximiation and determine nominal slope $\tilde{\beta}_j(s)$ at node $s$ for MS-PRO-SD-Kan};
\draw [arrow] (step1) -- (step1a);
\draw [arrow] (step1a) -- node[above]{yes} (step2);
\draw [arrow] (step2) -- (step3);
\draw [arrow] (step1a) --
 node[below]{no} (step2b);
\draw [arrow] (step2b) -- (step3b);
\draw [arrow] (step3) -- (step6);
\draw [arrow] (step3b) -- (step6);
\end{tikzpicture}
 }
\caption{
Flowcharts of procedures for constructing a nominal 
utility function in
MS-PRO-SD-Kan model.}
    \label{fig:preference-elicitation-sd}

\end{figure}

\textbf{\underline{States are not known}.}
If we do not have complete information about states, then
we may divide the data points into several groups,
each of which will be used  to construct a piecewise linear utility function.
Alternatively, we can sort out
the state-dependent
data points with
all available data
by solving a single optimization problem and construct
the state-dependent utility functions accordingly with the optimal solutions.
We can do so by
the best-fit approach or
unbiased estimation approach
with some minor modifications.
In the former case, we solve program:
\begin{subequations}\label{eq-utility-unbiased-sd}
\begin{align}
 \min\limits_{u
   }\ \
   & \sum_{i=1}^{N} \min_{k=1,\ldots,K}\{ (u_i^k-\hat{u}_i)^2 \} \\
  {\rm s.t.}\ \    
&
 (u^k_{i+1}-u^k_{i})/(x_{i+1}-x_{i}) \leq (u^k_{i}-u^k_{i-1})/(x_i-x_{i-1}),i=2,\ldots,N-1,\ k=1,\ldots,K, \label{eq:utility-unbiased-sd-1}\\
&  0\leq (1-{u}^k_N)/(x_{1}-x_{N}) \leq (u^k_{N}-u^k_{N-1})/(x_N-x_{N-1}),\ k=1,\ldots,K,  \label{eq:utility-unbiased-sd-2}\\
& (u^k_{2}-u^k_{1})/(x_{2}-x_{1}) \leq u_{1}/x_1,\ k=1,\ldots,K,  \label{eq:utility-unbiased-sd-3}\\
& 0\leq u^k_i \leq 1, \ i=1,\ldots,N,\ k=1,\ldots,K. \label{eq:utility-unbiased-sd-4}
\end{align}
\end{subequations}
The
key idea
here is to 
use $u^k_{i}$ instead of $u_i$
where $k$ represents state $k$ for
 $k=1,\ldots,K$,
 at
 each breakpoint (consumption level).
Constraints \eqref{eq:utility-true-unbiased-1}-\eqref{eq:utility-true-unbiased-4}
are imposed to
ensure monotonicity and concavity of the
utility function at
state $k$, for
$k=1,\ldots,K$.
The
objective is
to minimize
the least squares errors/gaps
between
the state-dependent utility (to be decided)
and
the collected utility values.
The modified unbiased estimation approach
uses the same idea:
\begin{subequations}\label{eq-utility-true_unbiased-sd}
\begin{align}
   \min\limits_{u^{C},u^{U},u^{L},\beta^{C}, \atop \beta^{U},\beta^{L},S_k}
& \sum_{k=1}^K \max\{\dd_K(u^C_k,{u}_k^{U}),\dd_K(u_k^C,{u}_k^{L}) \}   \\
 \inmat{s.t.}\quad\ \ &
\beta_i^{k,j}=  (u^{k,j}_{i}-u^{k,j}_{i-1})/(x_i-x_{i-1}) ,\ i=2,\ldots,N,\ k=1,\ldots,K, \ j\in \{U,L,C\},\label{eq:utility-true-unbiased-1}\\
& \beta^{k,j}_i \geq \beta^{k,j}_{i+1},\ i=1,\ldots,N-1,\ \beta_N\geq 0,\  k=1,\ldots,K, \ j\in \{U,L,C\},\\
& u^{k,j}_1=0,\ u^{k,j}_N=1,\ 0\leq u^{k,j}_i \leq 1, \ i=2,\ldots,N-1,\  k=1,\ldots,K, \ j\in \{U,L,C\},
\label{eq:utility-true-unbiased-3}\\
& u^{k,L}_i \leq u_i\leq u^{k,U}_i,\ \inmat{if}\ i\in S_k,\ i=2,\ldots,N-1,\  k=1,\ldots,K, \label{eq:utility-true-unbiased-4}
\end{align}
\end{subequations}
Here, we generate $K$ Kantorovich balls simultaneously
with center 
$u_k^C$, upper boundary $u^U_k$ and
lower boundary $u^L_k$
for the $k$-th ball at each breakpoint.
The objective is to
minimize the sum of
the Kantorovich distance from the center of each ball to the upper
or
the lower boundary whichever is greater.
Constraints
\eqref{eq:utility-true-unbiased-1}-\eqref{eq:utility-true-unbiased-3} are imposed
to ensure monotonicity and concavity of the utility function
underlying the values
$u_k^C$, $u^U_k$ and $u^L_k$ in each state $k$.
\eqref{eq:utility-true-unbiased-4} is to guarantee that $u^U_k$ and $u^L_k$ cover all the consumption-utility pairs under state $k$.
Since in this case we do not know which the state
each of the
consumption-utility pairs belongs to,
the index set $S_k$ is a decision variable.
To get rid of the index set,
we may introduce  variables
$z_{i,k}$ which takes a value of $0$ or $1$.
Consequently we can reformulate \eqref{eq-utility-true_unbiased-sd} as
\begin{subequations}
\label{eq-utility-true_unbiased-sd2}
\begin{eqnarray}
   & \min\limits_{u^{C},u^{U},u^{L},\atop \beta^{C},\beta^{U},\beta^{L},z}
& \sum_{k=1}^K \max\{\dd_K(u^C_k,{u}_K^{U}),\dd_K(u^C,{u}^{L}) \}   \\
& \inmat{s.t.} & \eqref{eq:utility-true-unbiased-1}-\eqref{eq:utility-true-unbiased-3}, \\
&& u^{k,L}_i \leq u_i + (1-z_{i,k})M, \ i=2,\ldots,N-1,\  k=1,\ldots,K, \label{eq:utlity-new-unbiased2-1} \\
&&  u_i\leq u^{k,U}_i + (1-z_{i,k})M,\ i=2,\ldots,N-1,\  k=1,\ldots,K, \label{eq:utlity-new-unbiased2-2}\\
&& \sum_{k=1}^K z_{i,k}=1,\ z_{i,k}\in \{0,1\}^{(N-2)\times K},\ i=2,\ldots,N-1. \label{eq:utlity-new-unbiased2-3}
\end{eqnarray}
\end{subequations}
By
the dual formualtion
of the
Kantorovich distance in \eqref{eq:Kant-u-v-LP-dual}, we can
 recast
\eqref{eq-utility-true_unbiased-sd2}
as an
MILP:
\begin{subequations}\label{eq-utility-true_unbiased-sd-form}
\begin{align}
\min\limits_{u^{C},u^{U},u^{L},\beta^{C},\atop \beta^{U},\beta^{L},z,\zeta}
& \sum_{k=1}^K \zeta_k   \\
 \inmat{s.t.} & \; \eqref{eq:utility-true-unbiased-1}-\eqref{eq:utility-true-unbiased-3},\eqref{eq:utlity-new-unbiased2-1}-\eqref{eq:utlity-new-unbiased2-3}, \\
 & \zeta_k \geq \frac{1}{2}\sum_{i=2}^N  (\lambda^{k,j}_i + \mu^{k,j}_i + \rho^{k,j}_i + \phi^{k,j}_i)(x_{i}- x_{i-1})^2,\ k=1,\ldots,K,\ j\in \{U,L\},
\\
& {\beta}^{k,C}_i -\beta_i^{k,U} + \lambda^{k,U}_i-\mu^{k,U}_i + \rho^{k,U}_i - \phi^{k,U}_i =0,\  i=2,\cdots,N,\ k=1,\ldots,K, \\
& {\beta}^{k,C}_i -\beta^{k,L} + \lambda^{k,L}_i-\mu^{k,L}_i + \rho^{k,L}_i - \phi^{k,L}_i =0,\  i=2,\cdots,N,\ k=1,\ldots,K, \\
& (\mu^{k,j}_{2}-\lambda^{k,j}_{2})(x_{2}-x_1) =0,\ k=1,\ldots,K,\ j\in \{U,L\}, \\
& (\mu^{k,j}_{i+1}-\lambda^{k,j}_{i+1})(x_{i+1}-x_i) + (\rho^{k,j}_{i}-\phi^{k,j}_{i})(x_{i}-x_{i-1})   =0,\\
& \qquad i=2,\cdots,N-1,\ k=1,\ldots,K, \  j\in \{ U,L,C \}, \\
& (\rho^{k,j}_{N}-\phi^{k,j}_{N})(x_{N}-x_{N-1})   =0,\ k=1,\ldots,K, \  j\in \{ U,L,C \}, \\
 & \mu^{k,j}_i, \lambda^{k,j}_i, \rho^{k,j}_i, \phi^{k,j}_i \geq 0,\ i=2,\cdots,N,\ k=1,\ldots,K, \  j\in \{ U, L, C \}.
\end{align}
\end{subequations}
By solving \eqref{eq-utility-true_unbiased-sd-form}, we can 
figure out simultaneously
the optimistic estimation $u^U_k$, the pessimistic estimation $u^L_k$ and the unbiased estimation $u^C_k$
at each state $k$.
Subsequently, we can
use the unbiased estimation
as the center of the Kantorovich ball in each state and
the optimal value of $\zeta_k$ as the radius of the ball.


We
use
two examples to illustrate the estimations with different a-priori number of states ($K$).
In the first group of tests,
we generate 40 score points
with
a true utility function
which depends on
two states, see the empty circles in Figures \ref{fig-score-sd-2-1}-\ref{fig-score-sd-2-2}. The true utility function is defined in Section 6.2.
We
work out the unbiased estimations with $K=1$
and $K=2$ by solving \eqref{eq-utility-true_unbiased-sd-form}. The
resulting unbiased utility functions
are
displayed in Figure \ref{fig-score-sd-2-1} and Figure \ref{fig-score-sd-2-2}.

In the second set of tests,
we also generate 40 score points by a true utility which are dependent of three states,
see the empty circles in Figures \ref{fig-score-sd-3-1}-\ref{fig-score-sd-3-3}.
We
figure out the unbiased estimations 
for $K=1$, $K=2$
and $K=3$
by solving \eqref{eq-utility-true_unbiased-sd-form} respectively,
the approximate piecewise linear utility functions
are displayed in Figure \ref{fig-score-sd-3-1}, Figure \ref{fig-score-sd-3-2} and Figure \ref{fig-score-sd-3-3}.








\begin{figure}[htbp]
	\centering
		\begin{minipage}[t]{0.44\textwidth}
		\centering
    \includegraphics[width=7.5cm]{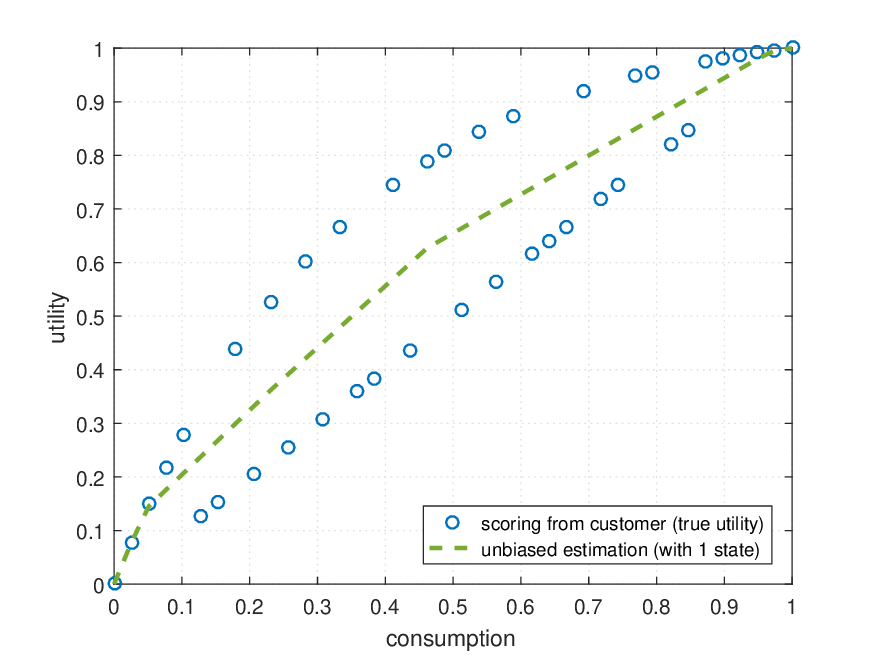}
\caption{
Empty circles represent 40 sample
scores generated by the true utility which are dependent of two states.
The green dashed curve is the unbiased utility function obtained from solving \eqref{eq-utility-true_unbiased-sd-form} with $K=1$.
\label{fig-score-sd-2-1}}
	\end{minipage}
\quad
 \begin{minipage}[t]{0.44\textwidth}
		\centering
	  \includegraphics[width=7.5cm]{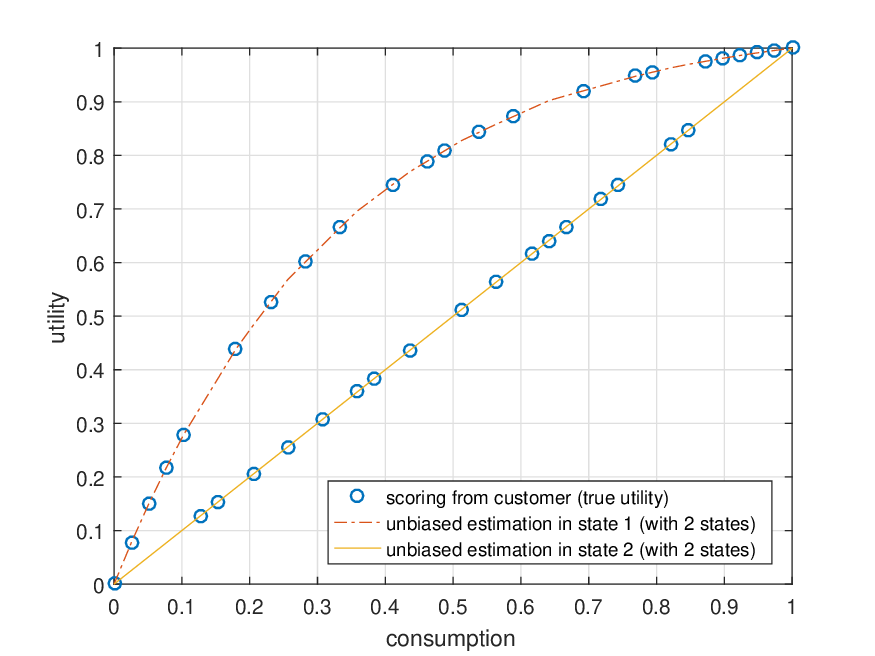}
\caption{
Empty circles represent 40 sample
scores generated by the true utility which are dependent of two states.
The pink and yellow dashed curve are two unbiased utility functions obtained from solving \eqref{eq-utility-true_unbiased-sd-form} with $K=2$.
\label{fig-score-sd-2-2}}
	\end{minipage}
\end{figure}

\begin{figure}[htbp]
	\centering
		\begin{minipage}[t]{0.44\textwidth}
		\centering
    \includegraphics[width=7.5cm]{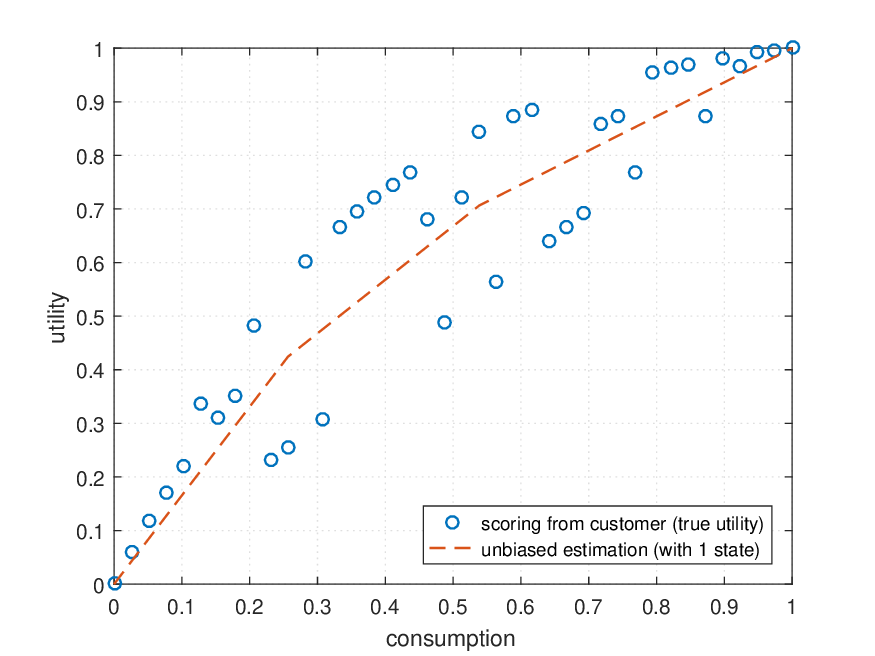}
\caption{
Empty circles represent 40 sample
scores generated by the true utility which are dependent of three states.
The red dashed curve is an unbiased utility function obtained from solving \eqref{eq-utility-true_unbiased-sd-form} with $K=1$.
\label{fig-score-sd-3-1}}
	\end{minipage}
\quad
 \begin{minipage}[t]{0.44\textwidth}
		\centering
	  \includegraphics[width=7.5cm]{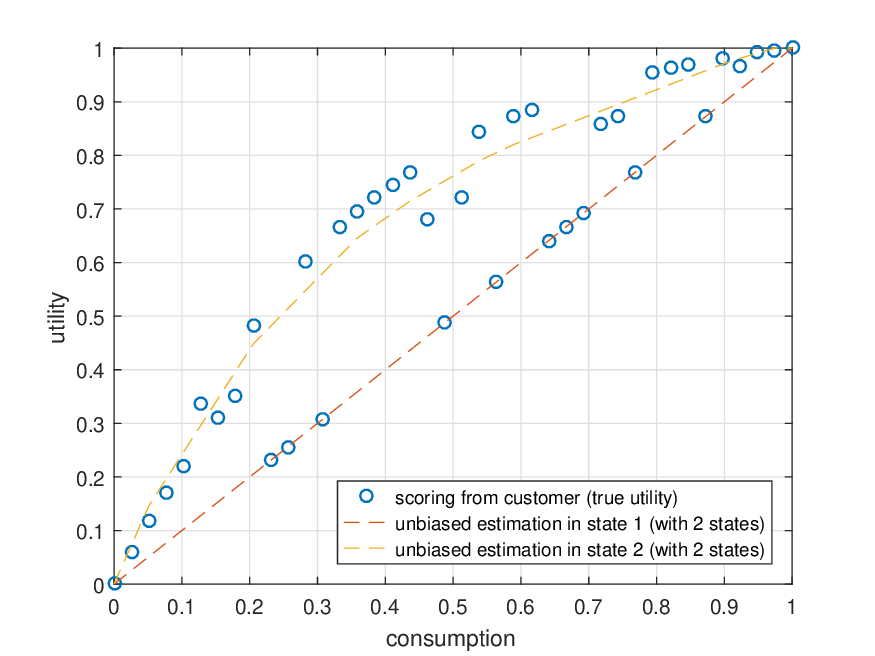}
\caption{
Empty circles represent 40 sample
scores generated by the true utility which are dependent of three states.
The yellow dashed curves and the red dashed line are two unbiased utility functions
obtained from solving \eqref{eq-utility-true_unbiased-sd-form} with $K=2$.
\label{fig-score-sd-3-2}}
	\end{minipage}
\end{figure}

\begin{figure}[htbp]
	\centering
		\begin{minipage}[t]{0.44\textwidth}
		\centering
    \includegraphics[width=7.5cm]{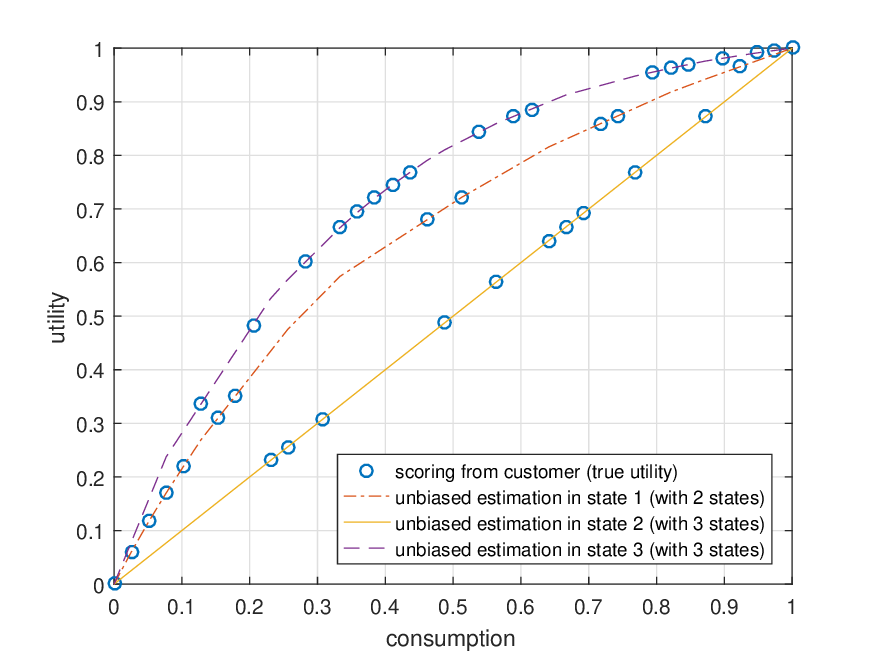}
\caption{
Empty circles represent 40 sample
scores generated by the true utility which are dependent of three states.
The pink and red dashed curves
and the yellow dashed line are three unbiased utility functions obtained from solving \eqref{eq-utility-true_unbiased-sd-form} with $K=3$.
\label{fig-score-sd-3-3}}
	\end{minipage}
\end{figure}


\section{Figures from numerical tests}



\begin{figure}[htbp]
	\centering
	\begin{minipage}[t]{0.44\textwidth}
  \centering
    \includegraphics[width=8cm]{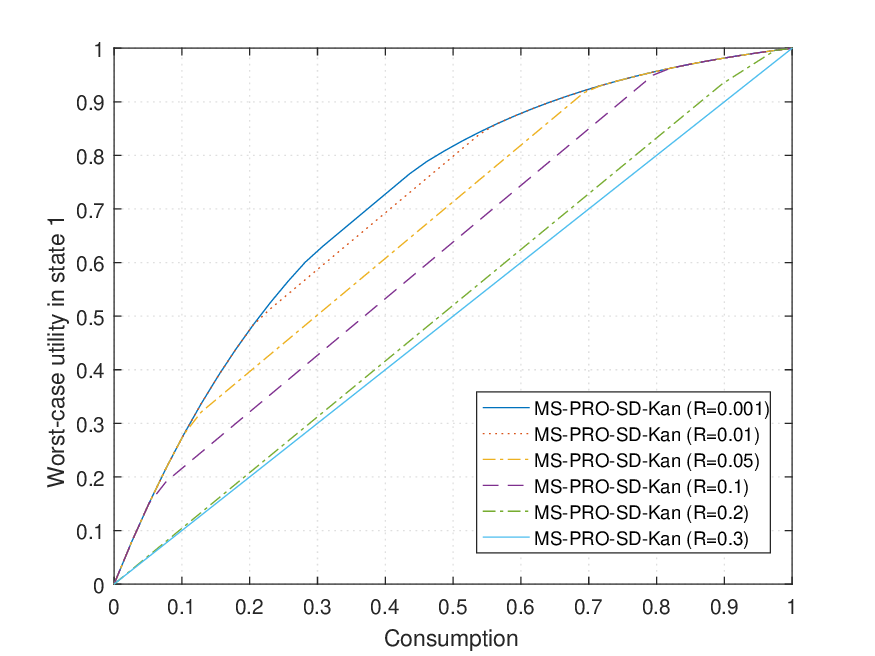}
    \caption{Worst-case utility functions in state 1 (high oil price) of MS-PRO-SD-Kan with different radii. }
    \label{ECfig:worst-case-utility}
	\end{minipage}
	\quad	
 \begin{minipage}[t]{0.44\textwidth}
		\centering
		 \includegraphics[width=7.5cm]{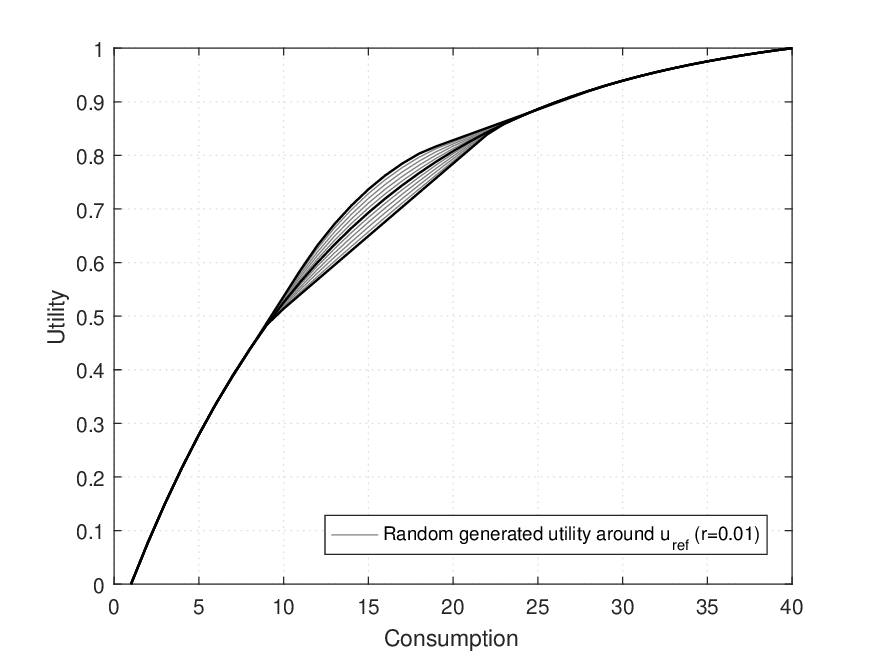}
\caption{
Utility functions randomly generated within 0.01 Kantorovich ball centered at the reference utility function in state 1 (high oil price)  ($N=40$). }\label{fig-random-u-001}
	\end{minipage}
\end{figure}

\begin{figure}[htbp]
	\centering
	\begin{minipage}[t]{0.44\textwidth}
		\centering
	 \includegraphics[width=7.5cm]{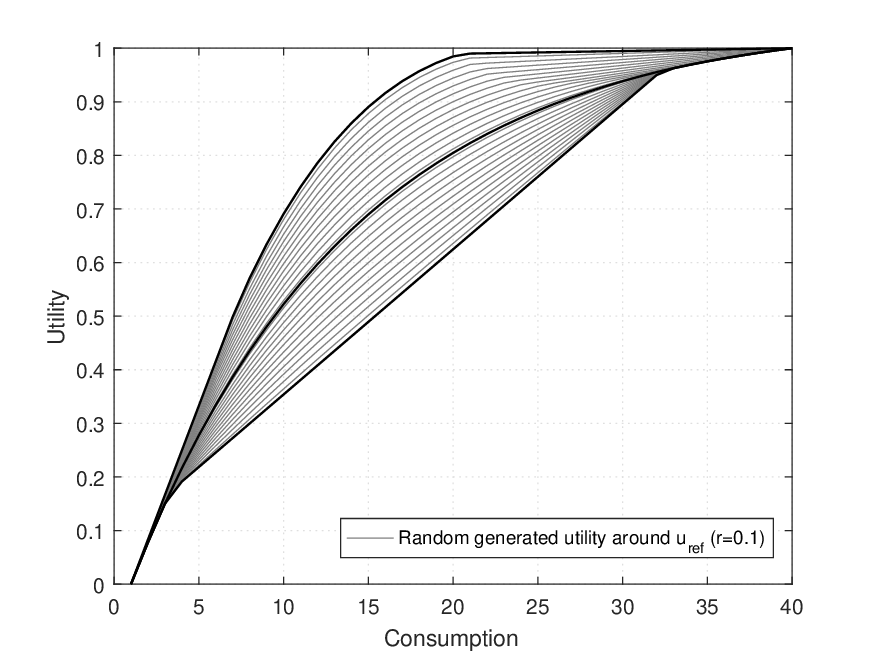}
\caption{
Utility functions randomly generated within 0.1 Kantorovich ball centered at the reference utility function in state 1 (high oil price) ($N=40$).}\label{fig-random-u-01}
	\end{minipage}
	\quad	\begin{minipage}[t]{0.44\textwidth}
		\centering
	 \includegraphics[width=7.5cm]{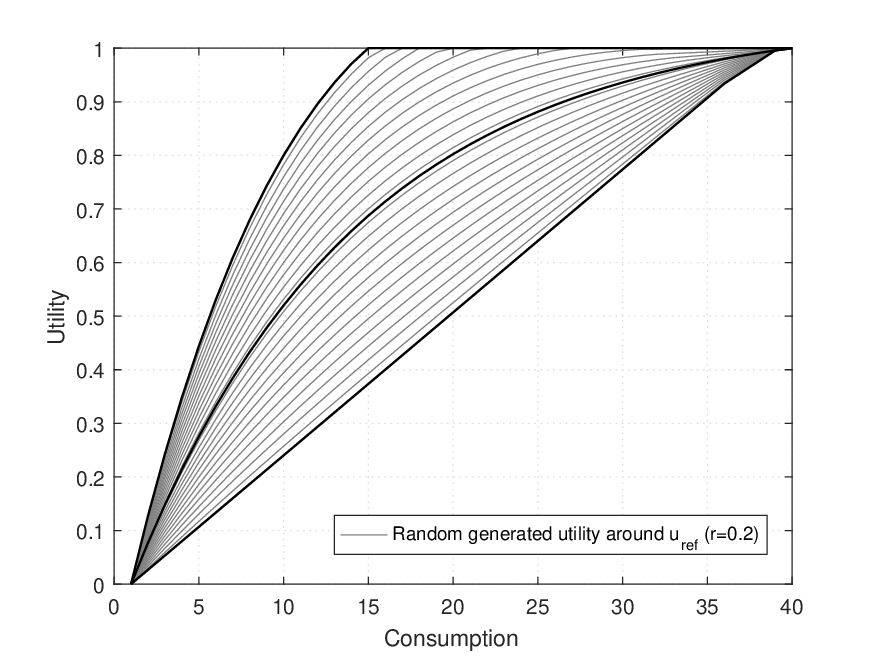}
\caption{
Utility functions randomly generated within 0.2 Kantorovich ball centered at the reference utility function in state 1 (high oil price) ($N=40$).}\label{fig-random-u-02}
	\end{minipage}
\end{figure}

}






 \newpage
\newpage

\end{document}